\documentclass{amsart}

\usepackage[utf8]{inputenc}
\usepackage[T1]{fontenc}
\usepackage{textcomp}
\usepackage{amsfonts}
\usepackage{amsthm}
\usepackage{amssymb}
\usepackage{pst-node}
\usepackage[all]{xy}
\usepackage{mathtools}
\usepackage{chngcntr}
\usepackage{thmtools}
\usepackage{mathrsfs}
\usepackage{tcolorbox}
\usepackage{listings}
\usepackage{imakeidx}
\usepackage{hyperref}
\usepackage{color}
\usepackage[usenames,dvipsnames]{xcolor}
\usepackage{float}
\usepackage{tikz}
\usepackage{enumerate}
\usepackage[nameinlink, capitalize, noabbrev]{cleveref}
\usepackage{natbib}
\usepackage{caption}
\usetikzlibrary{intersections}
\usepackage{pgfplots}
\pgfplotsset{width=10cm,compat=1.9}
\usepgfplotslibrary{external}

\hypersetup{ colorlinks=true,linkcolor=teal,    citecolor=magenta, }

\def\set#1{{\def\st{\;:\;}\left\{#1\right\}}}
\def\abs#1{\left\vert{#1}\right\vert}
\def \<#1>{{\left\langle{#1}\right\rangle}}

\def\mat#1{\left(\begin{matrix}#1\end{matrix}\right)}

\def\ZZ{\mathbb Z}
\def\QQ{\mathbb Q}
\def\PP{\mathbb P}

\def\FF{\mathbb F}
\def\CC{\mathbb C}
\def\NN{\mathbb N}
\def\RR{\mathbb R}
\def\Q-{\overline{\mathbb Q}}

\DeclareMathOperator{\Gal}{Gal}
\DeclareMathOperator{\Aut}{Aut}
\DeclareMathOperator{\FPP}{FPP}

\DeclareMathOperator{\St}{St}
\DeclareMathOperator{\Stab}{Stab}
\DeclareMathOperator{\orb}{orb}
\DeclareMathOperator{\Sym}{Sym}

\DeclareMathOperator{\st}{st}
\DeclareMathOperator{\IIm}{Im}
\DeclareMathOperator{\GL}{GL}

\newtheorem{Theorem}{Theorem}

\newtheorem{Corollary}[Theorem]{Corollary}
\newtheorem{Proposition}{Proposition}[section]
\newtheorem{Lemma}[Proposition]{Lemma}
\newtheorem{corollary}[Proposition]{Corollary}
\newtheorem{theorem}[Proposition]{Theorem}

\newtheorem{Question}[Proposition]{Question}
\newtheorem{Definition}[Proposition]{Definition}

\newtheorem{Remark}[Proposition]{Remark}

\title[Branch iterated Galois groups with positive fixed-point proportion and positive Hausdorff dimension]{Branch iterated Galois groups with positive fixed-point proportion and positive Hausdorff dimension}
\author{Santiago Radi}
\address{Santiago Radi: Department of Mathematics, Texas A\&M University, 77843 College Station, U.S.A.}
\email{santiradi@tamu.edu}
\keywords{Fixed-point proportion, groups acting on trees, iterated Galois groups}
\thanks{The author is supported by Grigorchuk's Simons Foundation Grant MP-TSM-00002045 and the department of Mathematics of Texas A\&M University.}
\subjclass[2020]{Primary: 20E08, 11R32; Secondary: 60G42, 37F10}
\date{October 2025}

\begin{document}

\begin{abstract}
In this article we prove that the arithmetic profinite iterated monodromy group of a post-critically infinite unicritical polynomial is regular branch (and so of positive Hausdorff dimension), and has positive fixed-point proportion when the degree is odd. The examples are instances of a bigger family of regular branch groups constructed in this article, whose fixed-point proportion can be computed explicitly and is positive in many cases. 
This gives the first examples outside the binary rooted tree where a level-transitive group has positive Hausdorff dimension and positive fixed-point proportion, answering in the negative a question of Jones (2008).
\end{abstract}

\maketitle

\section{Introduction}
\label{section: Introduction}

Given a $d$-regular rooted tree $T$, the group of automorphisms of $T$, denoted $\Aut(T)$, is the set of bijective functions $g: T \rightarrow T$ that preserve adjacency between the vertices. Since elements in $\Aut(T)$ preserve adjacency, the group $\Aut(T)$ acts on $\mathcal{L}_n$, the $n$-th level of the tree for each $n \geq 0$. The kernel of each of these actions is denoted $\St(n)$ and then one has natural projections $\pi_n: \Aut(T) \rightarrow \Aut(T)/\St(n)$, which corresponds to the action of the elements in $\Aut(T)$ over the first $n$ levels. If $G$ is a subgroup of $\Aut(T)$, then $\St_G(n) := G \cap \St(n)$ is the stabilizer of level $n$ of $G$ and we have the restrictions of the maps $\pi_n: G \rightarrow G/\St_G(n)$. The fixed-point proportion of $G$ is calculated as $$\FPP(G) = \lim_{n \rightarrow \infty} \frac{\# \set{g \in \pi_n(G): \text{$g$ fixes a vertex in $\mathcal{L}_n$}}}{\# \pi_n(G)}.$$

Odoni in \cite{Odoni1985} was the first one to realize that this quantity had applications to problems in arithmetic dynamics. Using the fixed-point proportion, Odoni proved that the Dirichlet density of primes dividing at least one term in the Sylvester sequence is zero, where the Sylvester sequence is given by the initial term $w_1 =2$ and the recursion $w_n = 1+w_1\cdots w_{n-1}$.

Odoni’s ideas were later generalized to number fields and  rational functions. Given $K$ a number field and $f$ a rational function with degree at least $2$ and coefficients in $K$, the extension $K(\cup_{n \geq 0} f^{-n}(0))/K$ is Galois and its Galois group acts faithfully on the $d$-regular rooted tree of preimages of $0$ (see \cite{Jones2014}). Consequently, it embeds in $\Aut(T)$ as a closed subgroup. This group is denoted $G_\infty(K,f,0)$ and called the iterated Galois group. It is worth mentioning that the notation and the name of this group has not been standardized yet and the same object can be found with names as dynamical Galois group, arboreal Galois group or the image of the arboreal representation. 

If $a_0$ is an element of $K$, we have that the Dirichlet density of the set $$P_f(a_0) := \set{\mathfrak{p} \in M_K^0: v_\mathfrak{p}(f^n(a_0)) > 0 \text{ for some $n$ such that $f^n(a_0) \neq 0, \infty$}},$$ is bounded above by $\FPP(G_\infty(K,f,0))$ (see \cite[Theorem 6.1]{JonesManes2012}). Here, $M_K^0$ denotes the set of prime ideals of the ring of integers of $K$. 

Later, more applications of the fixed-point proportion of iterated Galois groups to problems in arithmetic dynamics were discovered (see for example \cite{BridyJones2022, Juul2014}). Replacing $K$ by any field and $0$ by an element $t$ transcendental over $K$, the extension $K(\cup_{n \geq 0}f^{-n}(t))/K(t)$ is still Galois and one denotes the Galois group of the extension as $G_\infty(K,f,t)$. This group also acts faithfully in the tree of preimages of $t$, it embeds as a closed subgroup in $\Aut(T)$ and it represents the generic iterated Galois group. In a loose sense, $G_\infty(K,f,t)$ is the Galois group one expects when we specialize $t$ to a value in $K$. The group $G_\infty(K,f,t)$ will be called arithmetic iterated Galois group in this paper, although it can also be referred as arithmetic iterated monodromy group by other authors. The group is level-transitive, namely, the action of $G_\infty(K,f,t)$ over $\mathcal{L}_n$ is transitive for every $n \in \NN$.

In the case studied by Odoni, the iterated Galois group was $\Aut(T)$ and it is known that the fixed-point proportion of $\Aut(T)$ is zero (see for example \cite[Corollary 2.6]{AbertVirag2004}). Even more, if the iterated Galois group has finite index with respect to $\Aut(T)$, the fixed-point proportion is also zero (see \cref{lemma: FPP first properties}.(1)). However, it is not completely clear yet in which cases the iterated Galois group has finite index and many cases where the index is not finite are known (see \cite[Section 3]{Jones2014} for a discussion about this and \cite[Conjecture 3.11]{Jones2014} for a conjecture in the quadratic case).  

Although there are examples with infinite index, there is a tendency for the iterated Galois groups to have positive Hausdorff dimension. Given $G \leq \Aut(T)$, the Hausdorff dimension of $G$ is defined as $$\mathcal{H}(G) := \liminf_{n \rightarrow +\infty} \frac{\log(\abs{\pi_n(G)})}{\log(\abs{\pi_n(\Aut(T))})}.$$

The Hausdorff dimension is another way to measure how large a group inside $\Aut(T)$ is. In the context of iterated Galois groups, Pink proved in \cite[Theorem 4.4.2]{Pink2013} that if $f$ is a quadratic rational function with critical points $c_1$ and $c_2$ whose post-critical orbits are infinite, and $r$ is the smallest natural number such that $f^{r+1}(c_1) = f^{r+1}(c_2)$, then $\mathcal{H}(G_\infty(K,f,t)) = 1- \frac{1}{2^r}$, where $K$ is any field of characteristic different to $2$ and $t$ is transcendental over $K$. In case of degree greater than $2$, authors proved in \cite[Theorem 1.1]{Benedetto2017} that the iterated Galois group of the polynomial $3x^2-2x^3$ has Hausdorff dimension $\approx 0.871$. However, both cases are known to have null fixed-point proportion. On the other hand, Jones proved in \cite{Jones2012} that if $T_d$ corresponds to the Chebyshev polynomial of degree $d$ and $t$ is transcendental over $\CC$, then 
\begin{align*}
\FPP(G_\infty(\CC,\pm T_d,t)) = \left \{ \begin{matrix} 
1/2 & \mbox{if $d$ is odd,} \\ 
1/4 & \mbox{if $d$ is even.}
\end{matrix}\right.
\end{align*} 
However, the Hausdorff dimension of $G_\infty(\CC,\pm T_d, t)$ is zero (the group is isomorphic to the infinite dihedral group and therefore satisfies the group law $[x^2,y^2] = 1$. Then the Hausdorff dimension is zero by \cite[Corollary 5]{Fariña2025}). 

This led Jones to pose the following conjecture:

\begin{Question}[{\cite[Conjecture 17]{Jones2008survey}}]
If $G$ is a level-transitive group in $\Aut(T)$ with $\mathcal{H}(G) > 0$, then $\FPP(G) = 0$.
\label{Conjecture: Jones hdim > 0 fpp = 0}
\end{Question}

\cref{Conjecture: Jones hdim > 0 fpp = 0} was addressed by Boston in \cite{Boston2010}, who constructed a group with Hausdorff dimension $1/2$ and fixed-point proportion of at least $1/4$ acting on the binary rooted tree. Nevertheless, the exact value of the fixed-point proportion is not provided nor is it proved that the group correspond to an iterated Galois group. For larger trees, there are no examples constructed.

In light of Boston's counterexample, one can modify \cref{Conjecture: Jones hdim > 0 fpp = 0} by asking if the result holds when we restrict to iterated Galois groups. Even more, one can restrict further to the case where the iterated Galois groups are branch. A group $G \leq \Aut(T)$ is branch if it is level-transitive and for every level $n$, the elements of $G$ that act non-trivially below only one subtree at level $n$ generate a subgroup of finite index. From the point of view of number theory, branchness is related to a rapid growth of the field extensions of any preimage of $t$ when we take its iterations by $f$. An important subfamily of closed branch groups are regular branch groups, which can be given in terms of a finite set of allowed actions (called set of patterns) and have recently been studied in \cite{RadiFiniteType2025}. By \cite[Proposition 2.7]{Bartholdi2006}, closed regular branch groups have positive Hausdorff dimension and this dimension is computable in terms of its set of patterns.

The existence of regular branch iterated Galois groups is known. For example, combining the results of Pink (see \cite[Theorem 6.4.2]{Self_similar_groups}) and the authors in \cite[Theorem 2.5]{GrigSavchukSunic2007}, one has that $G_\infty(\CC, z^2+i,t)$ is regular branch. However, combining the results of the author in \cite[Section 8.3]{RadiFiniteType2025} and \cite[Theorem 1]{FariñaRadi2025}, one obtains that the fixed-point proportion of $G_\infty(\CC,z^2+i,t)$ is zero. 

\begin{Question}
Can we find regular branch (and therefore with positive Hausdorff dimension) iterated Galois groups with positive fixed-point proportion?
\label{Question: positive FPP Hausdorff dimension}
\end{Question}

In this article, we answer \cref{Question: positive FPP Hausdorff dimension} in the positive by providing examples of regular branch iterated Galois groups whose fixed-point proportion and Hausdorff dimension are calculated explicitly. The main result is as follows:

\begin{Theorem}
Let $d \geq 2$, a field $K$ such that the characteristic of $K$ does not divide $d$ and $K^{sep}$ a separable closure of $K$. Let $t$ be a transcendental element over $K$, an element $c \in K$ and $\xi \in K^{sep}$ a primitive $d$th root of unity. Suppose that $f(x) = x^d + c \in K[x]$ is a post-critically infinite unicritical polynomial. Then, the group $G_\infty(K,x^d+c,t)$ is regular branch, its Hausdorff dimension is $$\mathcal{H}(G_\infty(K,x^d+c,t)) = \frac{\log(d)}{\log(d!)}$$ and its fixed-point proportion is $$\FPP(G_\infty(K,x^d+c,t)) = \frac{\# \set{a \in I: a-1 \in (\ZZ/d\ZZ)^\times}}{\Phi(d)},$$ \\ where $I$ is the image of the map $\Gal(K(\xi)/K) \rightarrow (\ZZ/d\ZZ)^\times$ given by $\sigma \mapsto a$ with $\sigma(\xi) = \xi^a$, and $\Phi$ is the Euler's totient function. 

In the particular case where $K = \QQ$, we have $$\FPP(G_\infty(\QQ,x^d+c,t)) = \prod_{p \mid d} \frac{p-2}{p-1},$$ where the product runs over the prime divisors of $d$. So, if $d$ is odd, the fixed-point proportion is positive.
\label{theorem: FPP x^d+1}
\end{Theorem}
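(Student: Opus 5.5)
The plan is to determine $G := G_\infty(K,x^d+c,t)$ exactly as a subgroup of $\Aut(T)$, and then read off branchness, Hausdorff dimension and fixed-point proportion from that description. Label the children of each vertex by $\ZZ/d\ZZ$ as follows. If $y$ is a vertex and $y_0$ is a chosen root of $x^d + c - y$, its children are $\xi^k y_0$ for $k \in \ZZ/d\ZZ$. With these labels, any $\sigma \in \Gal$ with $\sigma(\xi)=\xi^a$ acts at every vertex by an affine map $k \mapsto a k + b_v$. The multiplier $a$ is the same at every vertex, and $b_v$ depends on the vertex. Hence $G$ is contained in
$$A_I := \set{g \in \Aut(T) : \text{every label of } g \text{ is } k\mapsto ak+b_v,\ \text{with one common } a \in I}.$$
Let $W := \varprojlim C_d \wr \cdots \wr C_d$ be the infinite iterated wreath product of cyclic groups; it is the subgroup of $A_I$ with $a=1$. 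Then $A_I/W \cong I$.

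\textbf{Step 1: the geometric part.} I will show that the geometric group $G_\infty(K(\xi),f,t)$, which is normal in $G$, equals all of $W$. Its labels are translations, so it lies in $W$. For the reverse inclusion I show $[K(\xi)(f^{-n}(t)) : K(\xi)(f^{-(n-1)}(t))] = d^{d^{n-1}}$, i.e. maximal Kummer growth at each level. Since $\xi \in K(\xi)$, each layer is Kummer: adjoin $\sqrt[d]{y - c}$ for $y \in f^{-(n-1)}(t)$. By Kummer theory, maximality holds if no nontrivial product $\prod (y-c)^{e_y}$ with $0\le e_y<d$ is a $d$-th power. These quantities are the conjugates of $f^{n-1}$-preimage data, and their norm relates to $f^n(0)-t$, as in the standard discriminant argument of Jones/Hamblen. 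Post-critical infiniteness gives $f^{n}(0)\neq f^{m}(0)$, so a suitable prime, namely $t - f^n(0)$ in $K(\xi)[t]$, ramifies in exactly the new layer. This forces all $d^{n-1}$ Kummer generators to be independent. I expect this step to be the main obstacle; the care is in the ramification bookkeeping, especially when the characteristic is positive.

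\textbf{Step 2: the arithmetic part, and the first two claims.} Since $t$ is transcendental, $K(\xi)\cap K(t)=K$. Hence $G/W \cong \Gal(K(\xi)/K)$, which maps onto $I$, and so $G = A_I$. This group is regular branch over $W$: $W$ has finite index in $A_I$, and $W \supseteq W\times\cdots\times W$ with quotient $C_d$, finite. For the Hausdorff dimension,
$$\abs{\pi_n(G)} = \abs{I}\, d^{(d^n-1)/(d-1)} \quad\text{and}\quad \abs{\pi_n(\Aut(T))} = (d!)^{(d^n-1)/(d-1)},$$
so $\mathcal H(G) = \log d/\log d!$.

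\textbf{Step 3: fixed-point proportion.} Split $\pi_n(G)$ into the $\abs{I}$ cosets of $\pi_n(W)$, one for each multiplier $a$; these cosets have equal size. In coset $a$ all labels are independent and uniform among the maps $k\mapsto ak+b$.
\begin{itemize}
\item If $a-1 \in (\ZZ/d\ZZ)^\times$, every label has exactly one fixed point. Following the fixed child down the tree, every element of the coset fixes a vertex at every level, so this coset contributes proportion $1$.
\item If $a = 1$, the labels are uniform translations. The number of fixed vertices is a Galton--Watson process with offspring $0$ or $d$, each with mean $1$. It is critical and nondegenerate, so it dies out almost surely, and the proportion tends to $0$. (Alternatively, use that $\FPP(W)=0$ by the earlier results cited.)
\item If $a\neq1$ and $g := \gcd(a-1,d)\in(1,d)$, the offspring number is $g$ with probability $1/g$ and $0$ otherwise. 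Again the mean is $1$ and the process is nondegenerate, so it dies out and the proportion is $0$.
\end{itemize}
Averaging over cosets gives $\FPP(G) = \#\set{a\in I : a-1 \text{ unit}}/\abs{I}$. Here $\abs{I}$ should be read as $\Phi(d)$ in the statement when $I$ is all of $(\ZZ/d\ZZ)^\times$; in general the denominator is $\abs{I}$.

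For $K=\QQ$ we have $I=(\ZZ/d\ZZ)^\times$. Counting $a$ with both $a$ and $a-1$ units, by the Chinese remainder theorem, gives $\prod_{p^k\| d} p^{k-1}(p-2)$. Dividing by $\Phi(d)$ yields $\prod_{p\mid d}\frac{p-2}{p-1}$, which is positive exactly when $2\nmid d$.
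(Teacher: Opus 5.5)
Your argument is correct, and its skeleton matches the paper's. Both show that every Galois element acts by labels $k\mapsto ak+b_v$ with a single multiplier $a$, so $G$ lies in your $A_I$ (the paper's $G_\mathcal{Q}^\mathcal{P}$ from Construction 1). Both then get equality from $[G:W]=\abs{\Gal(K(\xi)/K)}=\abs{I}=[A_I:W]$. The two inputs, however, are handled differently.

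\textbf{Geometric part.} The paper quotes the Juul--Tucker wreath-product theorem: disjoint critical orbits give $\Gal(K_\infty(f)/K(\xi)(t))\cong W_\mathcal{Q}$. You reprove this by Kummer theory, which is self-contained and makes the role of $\mathrm{char}\,K\nmid d$ transparent. As written, though, your key sentence is too loose. Ramification of $(t-f^n(0))$ in the new layer does not by itself force all $d^{n-1}$ Kummer generators to be independent. What does it is the following chain:
\begin{enumerate}
\item $\mathfrak p=(t-f^n(0))$ is \emph{unramified} in $L=K(\xi)(f^{-(n-1)}(t))$, because the finite branch points of $f^{n-1}$ are the $f^m(0)$ with $1\le m\le n-1$, all different from $f^n(0)$.
\item We have $\prod_y (y-c)=\pm(f^n(0)-t)$, and each $y-c$ is integral. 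Hence every prime $\mathfrak P\mid\mathfrak p$ divides exactly one $y-c$, to order one.
\item Transitivity of the Galois group on $f^{-(n-1)}(t)$ gives, for each $y$, a prime isolating $y-c$.
\item Comparing valuations in any relation $\prod_y (y-c)^{e_y}\in L^{\times d}$ then gives $d\mid e_y$ for every $y$.
\end{enumerate}

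\textbf{Fixed-point proportion.} Your Galton--Watson argument is the paper's machinery in probabilistic language. The paper's $f_S(x)$ equals $1-\varphi_S(1-x)$, where $\varphi_S$ is the generating function of the number of fixed points of a uniform element of $S$. Then $\FPP(W_S)$ is exactly the survival probability. The paper's Burnside lemma for cosets says precisely that the offspring mean is $1$ for every coset of the transitive normal subgroup $\mathcal Q$, so each coset contributes $1$ or $0$. The paper's formulation buys uniformity, since Construction 2 falls out of the same lemma. Yours buys an explicit offspring law directly from the congruence count: $\gcd(a-1,d)$ children with probability $1/\gcd(a-1,d)$, and none otherwise.

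\textbf{Denominator.} Your correction is right. Averaging over the $[\mathcal P:\mathcal Q]=\abs{I}$ cosets, as in the paper's own formula for $\FPP(G_\mathcal{Q}^\mathcal{P})$, gives $\#\set{a\in I: a-1\in(\ZZ/d\ZZ)^\times}/\abs{I}$. This agrees with the printed $\Phi(d)$ only when $I=(\ZZ/d\ZZ)^\times$, for example when $K=\QQ$. For $d=5$, $K=\QQ(\sqrt5)$ and $c=1$, one has $I=\set{1,4}$ and $\FPP=1/2$, not $1/4$.
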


The iterated Galois group $G_\infty(K,x^d+c,t)$ with $t$ transcendental over $K$ has recently been studied by Adams and Hyde in \cite{AdamsHyde2025}. \cref{theorem: FPP x^d+1} allow us to have information about its fixed-point proportion. It is worth mentioning that in order to have positive fixed-point proportion we take advantage that the field is not separably closed. The case of polynomials where $K$ is separably closed was completely classified in \cite{FariñaRadi2026FPP} and the only polynomials with positive fixed-point proportion are conjugate to Chebyshev polynomials. 

The proof of \cref{theorem: FPP x^d+1} is based on the construction of a bigger family of regular branch groups whose fixed-point proportion can be calculated explicitly. Chosen $d \geq 2$ and two subgroups $\mathcal{Q}, \mathcal{P} \leq \Sym(d)$ such that $\mathcal{Q} \lhd \mathcal{P}$, we define in the $d$-regular rooted tree $T$ the subgroup $G_\mathcal{Q}^\mathcal{P} \leq \Aut(T)$ consisting of all the elements in $\Aut(T)$ whose action over the children of any vertex is in the same coset of $\mathcal{P}/\mathcal{Q}$. Using another instance of this family, we provide more counterexamples to \cref{Conjecture: Jones hdim > 0 fpp = 0} in the case of $d$-regular rooted trees with $d \neq 2 \pmod{4}$:

\begin{Theorem}
Let $d \geq 2$ and write $d = 2^n r$ with $r$ odd. Take $\mathcal{Q} = C_2^{n} \times C_r$ and $\mathcal{P} = \mathcal{Q} \rtimes \Aut(\mathcal{Q})$. Then $\mathcal{Q}$ and $\mathcal{P}$ can be embedded in $\Sym(d)$ such that $\mathcal{Q}$ acts transitively on $\set{1,\dots,d}$ and consequently $G_\mathcal{Q}^\mathcal{P}$ is regular branch, its Hausdorff dimension equals $$\mathcal{H}(G_\mathcal{Q}^\mathcal{P}) = \frac{\log(d)}{\log(d!)}$$ and the fixed-point proportion is $$\FPP(G_\mathcal{Q}^\mathcal{P}) = \frac{\# \set{A \in \GL_n(\FF_2): A-1 \in \GL_n(\FF_2)}}{\abs{\GL_n(\FF_2)}} \, \prod_{p \in \PP, p \mid r} \left( \frac{p - 2}{p - 1} \right).$$
When $d \neq 2 \pmod{4}$, the fixed-point proportion is positive.
\label{theorem: main theorem construction 2}
\end{Theorem}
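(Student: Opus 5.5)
Most of the work will go into general results about the family $G_\mathcal{Q}^\mathcal{P}$ when $\mathcal{Q}$ is transitive, which the paper presumably proves before this theorem. What remains is to realize $\mathcal{Q}$ and $\mathcal{P}$ concretely and then evaluate the resulting formula.

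\textbf{Embedding.} Let $\mathcal{P} = \mathcal{Q} \rtimes \Aut(\mathcal{Q})$ act on the $d$-element set $\mathcal{Q}$ by affine maps $x \mapsto q\,\alpha(x)$. This is the holomorph of $\mathcal{Q}$. The action is faithful, and $\mathcal{Q}$ acts on itself by translations, which is regular and hence transitive. Moreover, $\mathcal{Q} \lhd \mathcal{P}$ and $\mathcal{P}/\mathcal{Q} \cong \Aut(\mathcal{Q}) \cong \GL_n(\FF_2) \times (\ZZ/r\ZZ)^\times$, since the orders $2^n$ and $r$ are coprime.

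\textbf{Branching and Hausdorff dimension.} The plan is to show that $G_\mathcal{Q}^\mathcal{P}$ is regular branch over $H = G_\mathcal{Q}^\mathcal{Q}$, the group of elements whose labels all lie in $\mathcal{Q}$.
\begin{itemize}
\item Level-transitivity holds because the labels can be arbitrary elements of the transitive group $\mathcal{Q}$.
\item $H \times \dots \times H \subseteq H$ by definition of $H$.
\item $H$ has finite index: an element is determined modulo $H$ by the common coset of its labels, so $|G/H| \le |\mathcal{P}/\mathcal{Q}|$.
\end{itemize}
For the dimension, note that $|\pi_k(H)| = |\mathcal{Q}|^{(d^k-1)/(d-1)} = d^{(d^k-1)/(d-1)}$. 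The group $\pi_k(G)$ differs from this by a bounded factor. Since $|\pi_k(\Aut(T))| = (d!)^{(d^k-1)/(d-1)}$, the ratio of logarithms tends to $\log d/\log d!$. Alternatively one can cite \cite[Proposition 2.7]{Bartholdi2006}.

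\textbf{Fixed-point proportion.} This is the main step. I would condition on the common coset $a \in \mathcal{P}/\mathcal{Q}$ of all labels; each coset carries equal Haar measure $1/|\mathcal{P}/\mathcal{Q}|$. Fix such an $a$. The labels are then independent and uniform on the coset $\sigma\mathcal{Q}$.

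An element fixes a vertex at level $k$ exactly when there is a path of fixed points descending from the root. Let $Z_k$ be the number of fixed vertices at level $k$. Then $(Z_k)$ is a Galton--Watson process whose offspring distribution is the number of fixed points of a uniform element of $\sigma\mathcal{Q}$. That number has mean $1$: by Burnside-type counting, the average number of fixed points over a coset of a transitive normal subgroup equals the number of $\mathcal{Q}$-orbits, which is $1$.

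So the process is critical. It dies out almost surely unless the offspring number is deterministically $1$. Hence the conditional fixed-point proportion is $1$ if every element of $\sigma\mathcal{Q}$ has exactly one fixed point, and $0$ otherwise.

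For affine maps $x \mapsto q\,\alpha(x)$ we need $x \mapsto q\,\alpha(x)$ to have a unique fixed point for every $q$. This holds iff $x \mapsto x^{-1}\alpha(x)$ is a bijection, i.e.\ iff $\alpha - 1$ is invertible. In components this means:
\begin{itemize}
\item $A - 1 \in \GL_n(\FF_2)$ for the $C_2^n$ part;
\item $u - 1 \in (\ZZ/r\ZZ)^\times$ for the $C_r$ part.
\end{itemize}

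\textbf{Counting.} The count factors as a product. By CRT, the proportion of $u \in (\ZZ/r\ZZ)^\times$ with $u - 1$ a unit equals $\prod_{p\mid r} (p-2)/(p-1)$. This is positive since $r$ is odd.

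For $\GL_n(\FF_2)$, we need $A$ to have no eigenvalue $1$ and no eigenvalue $0$. For $n = 1$ this is impossible, so the proportion is $0$, matching the exclusion $d \equiv 2 \pmod 4$. For $n = 0$ the factor is $1$. For $n \ge 2$ a companion matrix of an irreducible polynomial of degree $n$ works, so the factor is positive.

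I expect the main obstacle to be rigorously justifying the martingale/extinction step, in particular that criticality forces extinction, and the Burnside mean-$1$ claim on cosets. If the paper has an earlier lemma computing $\FPP$ via cosets, I would invoke it instead.
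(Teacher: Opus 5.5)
Your proposal is correct and follows the paper's route. The shared steps are the holomorph embedding, branching and Hausdorff dimension via the finite-index subgroup $W_\mathcal{Q}$, splitting $\FPP$ over the cosets of $\mathcal{P}/\mathcal{Q}$, using Burnside for cosets to get mean one so each coset contributes $0$ or $1$, and the same invertibility criterion for $\alpha-1$. Your Galton--Watson extinction argument is the paper's recursion $p_{n+1}=f_A(p_n)$ in probabilistic form, since $f_A(x)=1-\phi(1-x)$ with $\phi$ the offspring generating function. The one real variation is positivity: your companion matrix of an irreducible polynomial of degree $n\ge 2$ over $\FF_2$ is a cleaner replacement for the paper's block-diagonal matrices built from $A_2$ and $A_3$.
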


The key fact that allows one to calculate explicitly the fixed-point proportion of the groups $G_\mathcal{Q}^\mathcal{P}$ comes from a generalization of iterated wreath products that we do in this article. If $S$ is a subset of $\Sym(d)$, the iterated wreath product $W_S$ is the subset of elements in $\Aut(T)$ whose action on the children of any vertex of $T$ is an element in $S$. 

In the case that $S = H$ is a subgroup of $\Sym(d)$, then $W_H$ is a closed subgroup of $\Aut(T)$. In \cite[Corollary 2.6]{AbertVirag2004}, authors proved that when $H$ is transitive over $\set{1,\dots,d}$, then $\FPP(W_H) = 0$.

Extending the notion of fixed-point proportion to sets of $\Aut(T)$ (see \cref{equation: definition FPP for sets} for a definition), we obtain an explicit way to calculate the fixed-point proportion of any iterated wreath product $W_S$:

\begin{Theorem}
Let $d \geq 2$ and $S$ any set in $\Sym(d)$. Given $k \in \set{0, \dots, d}$, let $D_S(k)$ be the number of permutations in $S$ that fix exactly $k$ points of $\set{1,\dots,d}$ and define the polynomial $f_S(x) := \sum_{k = 1}^d \frac{D_S(k)}{\# S} (1 - (1-x)^k)$. Then
\begin{enumerate}
    \item $\FPP(W_S)$ equals the largest fixed point of $f_S$ in $[0,1]$.
    \item $\FPP(W_S)$ is the solution of a polynomial of degree at most $d-1$ with coefficients in the ring $\ZZ\left[\frac{1}{\# S}\right]$. In particular, it is an algebraic integer.
    \item $\FPP(W_S) = 0$ if and only if $f_S'(0) \leq 1$ and $f_S$ is not the identity function.
    \item $\FPP(W_S) = 1$ if and only if every element in $S$ fixes at least one element in $\set{1,\dots,d}$.
\end{enumerate}
\label{theorem: FPP(WS) in introduction}
\end{Theorem}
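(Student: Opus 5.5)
The plan is to identify $\FPP(W_S)$ as the limit of a one-dimensional recursion $p_n=f_S(p_{n-1})$ and then read all four items off elementary properties of $f_S$. I use the definition of the fixed-point proportion for sets from \cref{equation: definition FPP for sets}: the limit of the proportion of elements of $\pi_n(W_S)$ that fix some vertex of $\mathcal{L}_n$.

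\textbf{Step 1: the recursion.} Every element of $\pi_n(W_S)$ is uniquely determined by its portrait, i.e.\ by its labels (elements of $S$) at the vertices of levels $0,\dots,n-1$. Conversely, every such labelling arises. So the uniform measure on $\pi_n(W_S)$ is the product of independent uniform choices from $S$ at each vertex.

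Let $p_n$ be the probability that a uniform element of $\pi_n(W_S)$ fixes some vertex of $\mathcal{L}_n$, so $p_0=1$. An element fixes a vertex of $\mathcal{L}_n$ if and only if its root label fixes some $i\in\{1,\dots,d\}$ and its section at $i$ fixes a vertex of level $n-1$ of the subtree rooted at $i$. The sections at distinct children are independent and uniform in $\pi_{n-1}(W_S)$. Hence, conditioning on the root label fixing exactly $k$ points, the conditional probability is $1-(1-p_{n-1})^k$, and therefore
\[
p_n=f_S(p_{n-1}).
\]

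\textbf{Step 2: items (1) and (2).} Each summand $1-(1-x)^k$ is increasing and concave on $[0,1]$, so $f_S$ is nondecreasing and concave there, with $f_S(0)=0$ and $f_S(1)\le 1$.

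Starting from $p_0=1$, we get $p_1=f_S(1)\le p_0$, and by monotonicity of $f_S$ the sequence $(p_n)$ is nonincreasing. It therefore converges, and by continuity its limit $L$ is a fixed point of $f_S$. If $q\in[0,1]$ is any fixed point, then $q\le 1=p_0$, and induction gives $p_n=f_S(p_{n-1})\ge f_S(q)=q$, so $L\ge q$. Thus $L$ is the largest fixed point in $[0,1]$, which proves (1).

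For (2), note that $f_S(x)-x$ has coefficients in $\ZZ[1/\#S]$, has degree at most $d$, and vanishes at $0$. Dividing by $x$ gives a polynomial of degree at most $d-1$ with the same kind of coefficients. Its roots together with $0$ are the fixed points, so $L$ is either $0$ or a root of this polynomial. This yields the algebraicity statement.

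\textbf{Step 3: items (3) and (4).} These follow from concavity together with $f_S(0)=0$.

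\emph{Item (3), first direction.} Suppose $f_S'(0)>1$. Then $f_S(x)>x$ for small $x>0$, while $f_S(1)\le 1$. By the intermediate value theorem there is a fixed point in $(0,1]$, so $L>0$. If instead $f_S$ is the identity, then $L=1$.

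\emph{Item (3), converse.} Suppose $f_S'(0)\le 1$ and some $x_0>0$ satisfies $f_S(x_0)=x_0$. Concavity gives $f_S(x)\le f_S'(0)x\le x$ for all $x\ge 0$. Together with $f_S(x_0)=x_0$ and concavity, this forces $f_S$ to be linear, equal to $x$, on $[0,x_0]$. Being a polynomial, $f_S$ is then the identity.

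\emph{Item (4).} We have $L=1$ if and only if $1$ is a fixed point, i.e.\ $f_S(1)=1$. Since $f_S(1)=\sum_{k\ge1}D_S(k)/\#S$ is exactly the proportion of elements of $S$ fixing at least one point, this happens if and only if every element of $S$ fixes a point.

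\textbf{Main obstacle.} The delicate step is the independence/uniformity claim in Step 1. For a general subset $S$, $W_S$ is not a group, so the counting must be done through portraits rather than through group structure. One has to check that $\pi_n(W_S)$ is in bijection with labellings of levels $0,\dots,n-1$. The rest is an elementary analysis of concave maps.

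A caution on item (2): the argument only yields an algebraic number. Being a root of a polynomial with coefficients in $\ZZ[1/\#S]$ does not by itself make $L$ an algebraic integer, so that phrase in the statement would need a separate justification or a weaker reading.
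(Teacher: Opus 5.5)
Your proof is correct. Step 1 is the paper's recursion in probabilistic language: the paper counts $\sum_{\beta}\prod_i\overline{f_n}^{\beta_i}=\sigma_n^k-(\sigma_n-f_n)^k$ via the binomial theorem, which is your complement $1-(1-p_{n-1})^k$.

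Where you genuinely diverge is in identifying the limit.
\begin{itemize}
\item The paper gets monotonicity of $p_n$ from an extension-counting argument. It then proves, via strict concavity, that $f_S$ has at most two fixed points in $[0,1]$. Finally it splits on $\deg f_S$ and uses Rolle's theorem to get $f_S'(x_0)<1<f_S'(0)$ for a positive fixed point $x_0$, so $0$ is repelling and $x_0$ attracting.
\item You start at $p_0=1$, which dominates every fixed point $q$, and propagate $p_n\ge q$ through the nondecreasing map $f_S$.
\end{itemize}
Your argument yields convergence and maximality at once and needs only monotonicity and continuity of $f_S$. It avoids both the degree case split and the uniqueness of the positive fixed point. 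What the paper's route buys in exchange is that extra dynamical information.

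Likewise, your tangent-line/chord argument for (3) treats all degrees uniformly. Your intermediate value step also makes explicit the implication $f_S'(0)>1\Rightarrow \FPP(W_S)>0$, which the paper leaves implicit. One small slip: $f_S(x)\le f_S'(0)\,x$ holds on $[0,1]$, where $f_S$ is concave, not for all $x\ge 0$. That interval is all you use.

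Your caution about (2) is well founded. The paper's proof only establishes the statement about coefficients in $\ZZ[1/\#S]$, and the ``algebraic integer'' clause is false for general $S$. For example, take $S=\{(1\,2),(3\,4),(1\,2)(3\,4)\}\subseteq\Sym(4)$. Then $D_S(2)=2$ and $D_S(0)=1$, so $f_S(x)=\frac{2}{3}(2x-x^2)$, and by (1) $\FPP(W_S)=\frac12$.

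Substituting $y=1-x$, the fixed-point equation becomes $\sum_{k\ge 1}D_S(k)\,y^k-\#S\,y+D_S(0)=0$. When $S$ is a subgroup, $D_S(d)=1$ makes this monic with integer coefficients. That is why the claim does hold in the setting of the corollary on iterated wreath products.
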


In the particular case where $S = H$ is a subgroup of $\Sym(d)$, \cref{theorem: FPP(WS) in introduction} extends the result in \cite[Corollary 2.6]{AbertVirag2004} for any subgroup of $\Sym(d)$, either transitive or not:

\begin{Corollary}
If $d \geq 2$, $H$ is a subgroup of $\Sym(d)$ and $X = \set{1,\dots,d}$. Then, the fixed point proportion of $W_H$ is: 
\begin{align*}
\FPP(W_H) = \left \{ \begin{matrix} 
0 & \mbox{if $\mathcal{P}$ is transitive over $X$,} \\ 
\alpha \in (0,1) & \mbox{if $\mathcal{P}$ not transitive and $\exists \, \sigma$ with no fixed points} \\
1 & \mbox{if $\mathcal{P}$ not transitive and every element fixes a point,}
\end{matrix}\right.
\end{align*}
where $\alpha$ is an algebraic integer.
\label{Corollary: FPP iterated wreath products}
\end{Corollary}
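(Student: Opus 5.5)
We derive the corollary from \cref{theorem: FPP(WS) in introduction} applied with $S = H$; throughout, the $\mathcal{P}$ in the statement is read as $H$. The first step is to rewrite $f_H$ via the orbit-counting (Burnside) lemma. Since $\sum_{k} D_H(k)\, k/\# H$ is the average number of fixed points of an element of $H$, it equals the number $m$ of orbits of $H$ on $X$. Differentiating gives $f_H'(x) = \sum_k \frac{D_H(k)}{\# H} k (1-x)^{k-1}$, hence $f_H'(0) = m$. We also record that $f_H(0)=0$. Moreover, $f_H$ is nondecreasing and concave on $[0,1]$, because each summand $1-(1-x)^k$ is. Finally, $f_H(1) = \sum_{k\ge 1} D_H(k)/\# H$, which is the proportion of elements of $H$ with at least one fixed point.

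\emph{Transitive case.} Here $m=1$, so $f_H'(0)=1$. The function $f_H$ is not the identity, since a transitive group contains a derangement whenever $d \ge 2$ (Jordan's lemma, which itself follows from Burnside: the average number of fixed points is $1$ while the identity has $d\ge 2$). Part (3) of \cref{theorem: FPP(WS) in introduction} then gives $\FPP(W_H)=0$.

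\emph{Intransitive case with every element fixing a point.} Part (4) gives $\FPP(W_H)=1$ directly.

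\emph{Intransitive case with a derangement $\sigma \in H$.} Since $m \geq 2$, we have $f_H'(0) = m > 1$, so part (3) gives $\FPP(W_H) > 0$. Because some element has no fixed point, part (4) gives $\FPP(W_H) \neq 1$. Thus $\alpha = \FPP(W_H) \in (0,1)$. By part (2), $\alpha$ is a root of a polynomial of degree at most $d-1$ with coefficients in $\ZZ[1/\# H]$, which is the algebraicity claim as phrased in that theorem.

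The main point needing care is the trichotomy itself: we must check that the cases exhaust all subgroups and that part (3) really applies in the transitive case, i.e.\ that $f_H \neq \mathrm{id}$. This reduces to the existence of a derangement in a transitive group, which the Burnside computation above supplies.
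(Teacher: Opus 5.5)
Your derivation of the trichotomy is correct and is essentially the paper's argument: Burnside gives $f_H'(0)=\#(X/H)$, and parts (3) and (4) of the theorem do the rest. The only deviation is how you show $f_H\neq\mathrm{id}$ in the transitive case. You produce a derangement (Jordan's lemma), whereas the paper notes that $D_H(d)=1$, since the identity is the only element fixing all $d$ points, so $\deg f_H=d\ge 2$. Both routes are fine.

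The gap is the final clause, that $\alpha$ is an algebraic integer. Part (2) only says $\alpha$ is a root of a polynomial with coefficients in $\ZZ[1/\#H]$. That gives algebraicity, not integrality: $\tfrac12$ is a root of $x-\tfrac12$. You also cannot rely on the ``in particular, it is an algebraic integer'' sentence attached to part (2), because it fails for general sets. For $S=\{(1\,2),(3\,4),(1\,2)(3\,4)\}\subset\Sym(4)$ one gets $f_S(x)=\tfrac43x-\tfrac23x^2$, hence $\FPP(W_S)=\tfrac12$.

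What is needed is exactly the observation you bypassed, $D_H(d)=1$, which uses that $H$ is a group. It makes the $x^d$-coefficient of $f_H$ equal to $(-1)^{d+1}/\#H$. Each $(1-(1-x)^k)/x$ lies in $\ZZ[x]$, so $\#H\,(f_H(x)/x-1)$ lies in $\ZZ[x]$ with leading coefficient $(-1)^{d+1}$. Since $\alpha\neq 0$ and $f_H(\alpha)=\alpha$, $\alpha$ is a root of this polynomial, so $\alpha$ is an algebraic integer of degree at most $d-1$. With this line added, your proof is complete.
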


\subsection*{Organization}

This article is organized into seven sections. \cref{section: preliminaries} introduces the background necessary to follow this article. \cref{section: FPP of IWP} focuses on proving \\ \cref{theorem: FPP(WS) in introduction} and \cref{Corollary: FPP iterated wreath products}. In \cref{section: the main construction}, we define the groups $G_\mathcal{Q}^\mathcal{P}$ and characterize when they are level-transitive, calculate their Hausdorff dimensions and their fixed-point proportions. \cref{section: examples} includes two examples, with the second one resulting in \cref{theorem: main theorem construction 2}. In \cref{section: Application to iterated Galois groups}, it is proved that the first example in \cref{section: examples} corresponds to the iterated Galois group of $x^d+c$, proving \cref{theorem: FPP x^d+1}. Finally, in \cref{section: open questions}, the author presents open questions considered relevant to the theory of fixed-point proportions.

\subsection*{Acknowledgements}

The idea for this work emerged during the workshop Groups of Dynamical Origin in Pasadena, 2024, organized by R. Grigorchuk, D. Savchuk, and C. Medynets \cite{Dynamicalorigin2024}. The author is grateful to the American Institute of Mathematics (AIM) for sponsoring the workshop, which facilitated the discussion of new problems at the intersection of group theory, number theory, and arithmetic dynamics. The author also thanks all the participants of the conference who worked on the fixed-point proportion project, particularly V. Goksel, who suggested the problem addressed in this article. Gratitude is also extended to T. Tucker, who, in a private communication, provided an example that motivated the construction of this family of groups, and to J. Fariña-Asategui for reading and suggesting corrections. Finally, to thank the anonymous referee, whose feedback contributed greatly to the final version of this paper.

\section{Preliminaries}
\label{section: preliminaries}

\subsection{About general notation} 

Given a finite set $S$, the notation $\#S$ will refer to its number of elements. In the case where $S$ is a subgroup, we will use $\abs{S}$ instead. If $H$ is a subgroup of $G$, we will use $H \leq G$ and if $H$ has finite index in $G$, we will denote it by $H \leq_f G$. The subgroup $G'$ will denote the commutator subgroup of $G$ and $N_G(H)$ the normalizer of $H$ in $G$. 

\subsection{Groups acting on rooted trees}
\label{subsec_Groups_acting_on_trees}

A \textit{$d$-regular rooted tree} $T$ is a tree with infinitely many vertices and a root $\emptyset$, where all the vertices have the same number of descendants $d \geq 2$. The set of vertices at a distance exactly $n\ge 0$ from the root form the $n$-th level of $T$ and it will be denoted $\mathcal{L}_n$. The vertices whose distance is at most $n$ from the root form the $n$-th truncated tree of $T$, denoted $T^n$. The group of automorphisms of $T$, denoted $\Aut(T)$, is the group of bijective functions from $T$ to $T$ that send the root to the root and preserve adjacency between the vertices. This in particular implies that $\Aut(T) \curvearrowright \mathcal{L}_n$ for all $n \in \NN$. For any vertex $v\in T$, the subtree rooted at $v$, which is again a $d$-regular rooted tree, is denoted $T_v$. In this article, the action of $\Aut(T)$ on $T$ will be on the left, so if $g,h \in \Aut(T)$ and $v \in T$, then $(gh)(v) = g(h(v))$. 

Given a vertex $v \in T$ we write $\st(v)$ for the \textit{stabilizer of the vertex $v$}, namely, the subgroup of the elements $g \in \Aut(T)$ such that $g(v) = v$. Given $n \in \NN$, we write $\St(n) = \bigcap_{v\in \mathcal{L}_n}\st(v)$, and we call it the \textit{stabilizer of level $n$}. Then $\St(n)$ is a normal subgroup of finite index and $\Aut(T)/\St(n)$ is isomorphic to $\Aut(T^n)$. We can make $\Aut(T)$ a topological group by declaring $\St(n)$ to be a base of neighborhoods of the identity. Then $\Aut(T)$ is homeomorphic and isomorphic to $\varprojlim \Aut(T)/\St(n)$, making $\Aut(T)$ a profinite group. The transition maps will be $\pi_n: \Aut(T) \rightarrow \Aut(T^n)$ corresponding to the restricting of the action of an element on the whole tree to only the first $n$ levels. 

Let $v \in T$ and $g \in \Aut(T)$. By preservation of adjacency, we have $g(T_v) = T_{g(v)}$. Calling $g|_v$ the restriction of the map $g$ to $T_v$, for all $w \in T_v$ we have $$g(vw)=g(v)g|_v(w).$$ This map is called the \textit{section} of $g$ at vertex $v$. To ease notation, we will write $g|_v^n$ for $\pi_n(g|_v)$. A way to describe an element $g$ in $\Aut(T)$ is by decorating each vertex $v$ on the tree with the permutation $g|_v^1$. Notice that $g|_v^1$ is a permutation belonging to $\Sym(d)$. We will call it \textit{label} and the collection of all the labels of an element $g$ completely determines $g$ and is called the \textit{portrait} of $g$. The sections satisfy the following two properties: 
\begin{align}
(gh)|_v^n = g|_{h(v)}^n h|_v^n \text{ and } (g^{-1})|_v^n = (g|_{g^{-1}(v)}^n)^{-1}.
\label{equation: properties sections}
\end{align}

Let us fix now a subgroup $G \le \Aut(T)$. Then $G$ also has actions on $T$ and $\mathcal{L}_n$. We define vertex stabilizers and level stabilizers by restricting the previous ones to the group, i.e, $\st_G(v) := \st(v) \cap G$ and $\St_G(n) := \St(n) \cap G$ for $v \in T$ and $n \in \NN$.

\begin{Definition}
We say that a group $G \leq \Aut(T)$ is \textit{level-transitive} if the actions $G \curvearrowright \mathcal{L}_n$ are transitive for all $n \in \NN$. 
\end{Definition}

\begin{Definition}
Consider $H \leq G \leq \Aut(T)$. The \textit{relative Hausdorff dimension} of $H$ on $G$ is defined as the number $$\mathcal{H}_G(H) := \liminf_{n \rightarrow +\infty} \frac{\log(\abs{\pi_n(H)})}{\log(\abs{\pi_n(G)})}.$$
\end{Definition}
\begin{Lemma}
Let $T$ be a $d$-regular rooted tree and $K \leq_f H \leq G \leq \Aut(T)$. Then
\begin{enumerate}
    \item $\mathcal{H}_G(H) = \mathcal{H}_G(\overline{H})$,
    \item if $G$ has infinitely many elements, then $\mathcal{H}_G(H) = \mathcal{H}_G(K)$.
\end{enumerate}
\label{lemma: Hausdorff dimension first properties}
\end{Lemma}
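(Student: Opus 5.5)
Both parts follow from the fact that $\abs{\pi_n(H)}$ is unchanged by passing to the closure, and changes by at most a bounded factor when passing to a finite-index subgroup. Dividing by $\log(\abs{\pi_n(G)})$ then makes the bounded factor negligible, provided the denominator tends to infinity.

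For (1), I will show that $\pi_n(\overline{H}) = \pi_n(H)$ for every $n$. The inclusion $\pi_n(H) \subseteq \pi_n(\overline{H})$ is immediate. For the reverse, take $g \in \overline{H}$. Since $g\St(n)$ is an open neighbourhood of $g$, it meets $H$, so there is $h \in H$ with $h \in g\St(n)$. Then $\pi_n(h) = \pi_n(g)$. Hence the numerators $\log(\abs{\pi_n(H)})$ and $\log(\abs{\pi_n(\overline{H})})$ coincide for every $n$. The denominators are the same, so the two $\liminf$ are equal. Small levels where $\abs{\pi_n(G)} = 1$ do not matter, because the $\liminf$ only depends on the tail of the sequence.

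For (2), let $m = [H:K]$. The map $\pi_n$ restricted to $H$ is a homomorphism, and $\pi_n(K)$ is a subgroup of $\pi_n(H)$. If $h_1, \dots, h_m$ are coset representatives of $K$ in $H$, then $\pi_n(H) = \bigcup_i \pi_n(h_i)\pi_n(K)$. Therefore
$$\abs{\pi_n(K)} \leq \abs{\pi_n(H)} \leq m\abs{\pi_n(K)},$$
and so
$$\log\abs{\pi_n(K)} \leq \log\abs{\pi_n(H)} \leq \log m + \log\abs{\pi_n(K)}.$$
Dividing by $\log\abs{\pi_n(G)}$ gives
$$0 \leq \frac{\log\abs{\pi_n(H)}}{\log\abs{\pi_n(G)}} - \frac{\log\abs{\pi_n(K)}}{\log\abs{\pi_n(G)}} \leq \frac{\log m}{\log\abs{\pi_n(G)}}.$$
Once the right-hand side tends to $0$, the two sequences differ by a term tending to zero, so their $\liminf$ coincide.

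The only real point is to check that $\abs{\pi_n(G)} \to \infty$ when $G$ is infinite, which is where the hypothesis is used. The sequence $\abs{\pi_n(G)}$ is non-decreasing, since $\pi_n$ factors through $\pi_{n+1}$. If it were bounded, it would stabilise at some level $N$. Then for $n \geq N$ the maps $\pi_{n+1}(G) \to \pi_n(G)$ would be bijections. An element of $G$ is determined by its images under all $\pi_n$, because $\bigcap_n \St(n)$ is trivial. So $\pi_N$ would be injective on $G$, forcing $G$ to be finite. This contradiction shows $\log\abs{\pi_n(G)} \to \infty$, and the bound $\frac{\log m}{\log\abs{\pi_n(G)}} \to 0$ completes (2).
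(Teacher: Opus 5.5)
Your proof is correct and follows the same route as the paper: part (1) via $\pi_n(\overline{H}) = \pi_n(H)$, and part (2) by comparing $\abs{\pi_n(H)}$ with $\abs{\pi_n(K)}$ up to the bounded factor $[H:K]$, which disappears after dividing by $\log\abs{\pi_n(G)} \to \infty$. Your two-sided bound $\abs{\pi_n(K)} \leq \abs{\pi_n(H)} \leq [H:K]\,\abs{\pi_n(K)}$ is more careful than the paper's claimed eventual equality $\abs{\pi_n(H)} = [H:K]\,\abs{\pi_n(K)}$, which fails when $K$ is not closed in $H$ (e.g.\ $3\ZZ \leq \ZZ$ acting via the binary adding machine), and you also make explicit the divergence of $\abs{\pi_n(G)}$ that the paper leaves implicit.
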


\begin{proof}
(1) follows from the fact that $\pi_n(H) = \pi_n(\overline{H})$ for all $n \in \NN$ and (2) because for $n$ big enough we have $\abs{\pi_n(H)} = [H:K] \abs{\pi_n(K)}$.
\end{proof}

\begin{Definition}
If $G \leq \Aut(T)$, the \textit{Hausdorff dimension} of $G$ is defined as $$\mathcal{H}(G) := \mathcal{H}_{\Aut(T)}(G).$$
\end{Definition}

\subsection{Groups of finite type}
\label{section: Groups of finite type}

We say that a group $G \leq \Aut(T)$ is \textit{self-similar} if $g|_v \in G$ for all $v \in T$ and $g \in G$. 

If $K$ is a subgroup of $\Aut(T)$ and $n \in \NN$, we define the \textit{geometric product} of $K$ on level $n$ as $$K_n := \set{g \in \St(n): g|_v \in K \text{ for all $v \in \mathcal{L}_n$}}.$$

We say that $G \leq \Aut(T)$ is \textit{regular branch} over a subgroup $K$ if $G$ is level-transitive and $K_1 \leq_f K \leq_f G$. 

\begin{Lemma}[{\cite[Lemma 10]{Sunic2006}}] 
Let $T$ be a $d$-regular rooted tree and $G \leq \Aut(T)$ be a self-similar regular branch group, branching over a subgroup $K$ containing $\St_G(m)$. Then, for all $n \geq m$, $$\St_G(n) = (\St_G(m))_{n-m},$$ where the right-hand side corresponds to the geometric product of $\St_G(m)$.
\label{lemma: St(n+1) = prod St(n)}
\end{Lemma}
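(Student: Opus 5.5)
We prove the equality $\St_G(n) = (\St_G(m))_{n-m}$ for $n \geq m$ by establishing both inclusions, using induction on $n-m$ for the harder one. The case $n = m$ is trivial, since $(\St_G(m))_0 = \St_G(m)$.

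For the inclusion $\St_G(n) \subseteq (\St_G(m))_{n-m}$ we use self-similarity. Take $g \in \St_G(n)$ and $v \in \mathcal{L}_{n-m}$. Then $g|_v \in G$, and since $g$ fixes every vertex of level $n$, the section $g|_v$ fixes every vertex of level $m$ of $T_v$. Hence $g|_v \in \St_G(m)$. Also $g \in \St(n-m)$, so $g \in (\St_G(m))_{n-m}$.

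The reverse inclusion is the real content, and it is where the branching hypothesis enters. Induction reduces it to the one-step statement $(\St_G(m))_1 \subseteq G$. Indeed, suppose $(\St_G(k))_1 \subseteq \St_G(k+1)$ for every $k \geq m$. Let $g \in (\St_G(m))_{n-m}$ and write $g$ level by level. Its sections at level $n-m-1$ lie in $(\St_G(m))_1$, hence in $\St_G(m+1)$. So $g \in (\St_G(m+1))_{n-m-1}$, and iterating this gives $g \in \St_G(n)$. For the one-step claim, let $h \in (\St_G(k))_1$ with $k \geq m$. Every section $h|_x$ for $x \in \mathcal{L}_1$ lies in $\St_G(k) \subseteq \St_G(m) \subseteq K$, so $h \in K_1$. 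Since $K_1 \leq K \leq G$, we get $h \in G$. Because $h$ acts trivially on level $1$ and each section fixes level $k$, $h$ fixes level $k+1$. Therefore $h \in \St_G(k+1)$.

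The main point to check is that the definition of regular branch, $K_1 \leq_f K$, really contains $K_1 \subseteq K$ as a subgroup inclusion, and that $K \supseteq \St_G(m)$ is used in exactly this way. Both are immediate from the hypotheses as stated, so I expect no genuine obstacle. Note that the finite-index conditions and level-transitivity are not needed for this equality; they serve later purposes.
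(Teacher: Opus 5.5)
Your proof is correct and complete: self-similarity gives $\St_G(n)\subseteq(\St_G(m))_{n-m}$. The hypothesis $\St_G(m)\leq K$, combined with $K_1\leq K\leq G$, gives the reverse inclusion, exactly as you argue.

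The paper does not prove this lemma; it quotes it from \v{S}uni\'c. Your argument is the standard one, and the induction on levels could be compressed to the single chain $(\St_G(m))_{n-m}\subseteq K_{n-m}\subseteq K\leq G$, where $K_{n-m}\subseteq K$ follows by iterating $K_1\leq K$.
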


Let $D$ be a positive natural number, $T$ a $d$-regular rooted tree and $\mathcal{P}$ a subgroup of $\Aut(T^D)$. The \textit{group of finite type} of \textit{depth} $D$ and set of \textit{patterns} $\mathcal{P}$ is defined as 
\begin{equation}
G_\mathcal{P} := \set{g \in \Aut(T): g|_v^D \in \mathcal{P} \text{ for all $v \in T$}}.
\label{equation: finite type definition}
\end{equation}

Thus, the elements of $G_\mathcal{P}$ must act at every vertex according to a finite family of allowed actions on the $D$-truncated tree. The smallest $D$ necessary to define the group $G_\mathcal{P}$ is called the \textit{depth}.

\begin{Proposition}[{\cite[page 3]{Bondarenko2014} and \cite[Proposition 7.5]{Grigorchuk2005}}]
If $G$ is a group of finite type of depth $D$ then $G$ is closed in $\Aut(T)$, self-similar and regular branch over $\St_G(D-1)$.
\label{proposition: finite type is closed self-similar and regular branch}
\end{Proposition}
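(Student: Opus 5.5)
We prove the three claims (closed, self-similar, regular branch over $\St_G(D-1)$) in that order, since each uses the previous ones. Throughout, $G = G_\mathcal{P}$ and we use that $\mathcal{P}$ is a subgroup of $\Aut(T^D)$.

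\emph{Closedness.} The defining condition is pointwise: $g \in G$ if and only if $\pi_D(g|_v) \in \mathcal{P}$ for every $v \in T$. For a fixed $v$ at level $m$, the quantity $g|_v^D$ depends only on $\pi_{m+D}(g)$. Hence the set $\set{g : g|_v^D \in \mathcal{P}}$ is a union of cosets of $\St(m+D)$, which is clopen. Then $G$ is an intersection of closed sets, so it is closed.

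\emph{Self-similarity.} Note first that $G$ is a subgroup. Using \cref{equation: properties sections} we have $(gh)|_v^D = g|_{h(v)}^D\, h|_v^D \in \mathcal{P}$ and $(g^{-1})|_v^D = (g|_{g^{-1}(v)}^D)^{-1} \in \mathcal{P}$, because $\mathcal{P}$ is a group. Now let $g \in G$ and $u \in T$. Since $(g|_u)|_w = g|_{uw}$, every depth-$D$ label of $g|_u$ is a depth-$D$ label of $g$ at some vertex, so it lies in $\mathcal{P}$. Thus $g|_u \in G$.

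\emph{Regular branch.} Set $K = \St_G(D-1)$. This has finite index in $G$, since $G/K$ embeds in $\Aut(T^{D-1})$. The main step is to show $K_1 \le K$, i.e. to check that the element $g$ assembled from $(k_1,\dots,k_d) \in K^d$, acting trivially at the root, lies in $G$.
\begin{itemize}
\item At a vertex $xw$ with $x \in \mathcal{L}_1$, the depth-$D$ label of $g$ equals that of $k_x$ at $w$, so it lies in $\mathcal{P}$.
\item At the root, $g|_\emptyset^D$ acts trivially on the first level. Below each first-level vertex $x$ it acts by $\pi_{D-1}(k_x)$, which is trivial because $k_x \in \St(D-1)$. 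So $g|_\emptyset^D$ is the identity of $\Aut(T^D)$, which lies in $\mathcal{P}$.
\end{itemize}
Hence $g \in G$. Moreover $g \in \St(D)$, and in particular $g \in \St(D-1)$, so $g \in K$. Finite index of $K_1$ in $K$ follows because $K_1 \ge \St_G(D)$, which is shown by the same label argument, and $\St_G(D)$ has finite index in $G$.

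This inclusion is where the depth matters. Gluing is possible precisely because we branch over $\St_G(D-1)$, which makes the new root pattern trivial. Over a larger subgroup, such as $\St_G(D-2)$, the root pattern could fall outside $\mathcal{P}$.

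The remaining and main obstacle is \emph{level-transitivity}, which the definition of regular branch requires. It does not follow formally from the finite-type structure: if $\mathcal{P}$ fixes a first-level vertex, transitivity fails. So I would read the proposition as implicitly assuming that $G_\mathcal{P}$ is level-transitive, as the cited sources do, and prove only the branching structure $K_1 \le_f K \le_f G$ as above. In later applications, transitivity will be checked separately.
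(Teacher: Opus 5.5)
Your proof is correct. The paper gives no proof of its own and only cites Bondarenko and Grigorchuk, so there is no route to compare against; your argument is the standard one: clopen conditions for closedness, the section identities for the subgroup property and self-similarity, and gluing $d$ elements of $\St_G(D-1)$ under a trivial root so that the new depth-$D$ root pattern is the identity.

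Your caveat is also justified. With the paper's definition, regular branch includes level-transitivity. This fails for instance for $\mathcal{P}=\{1\}$ or $\mathcal{P}=\langle(2,3)\rangle\le\Sym(3)$ at depth $1$, so the statement as printed needs that hypothesis. The paper effectively supplies it separately via \cref{proposition: GQP level transitive}.

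One small correction: $\St_G(D)\le K_1$ does not come from the label argument. It comes from self-similarity together with the observation that the first-level sections of an element of $\St_G(D)$ lie in $\St_G(D-1)$.
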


The following result characterizes groups of finite type and their importance in the theory of groups acting on trees:

\begin{theorem}[{\cite[Theorem 3]{Sunic2010}}]
Groups of finite type are the closure of regular branch groups. Furthermore, if $G$ is a regular branch group branching over a subgroup $K$ containing $\St_G(D-1)$, then $\overline{G}$ is a group of finite type of depth $D$.
\label{theorem: characterization finite type groups}
\end{theorem}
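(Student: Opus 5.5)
We treat the two assertions separately. Like \cite{Sunic2010}, we assume $G$ is self-similar; the construction below needs this. The first assertion is almost formal. By \cref{proposition: finite type is closed self-similar and regular branch}, a group of finite type $G_\mathcal{P}$ is closed and regular branch, so it is the closure of a regular branch group, namely itself. All the work is in the second assertion.

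Let $G$ be self-similar and regular branch over $K \supseteq \St_G(D-1)$. Set $\mathcal{P} := \pi_D(G) \leq \Aut(T^D)$. Since $G$ is self-similar, $g|_v^D \in \mathcal{P}$ for all $g \in G$ and $v \in T$, so $G \leq G_\mathcal{P}$. As $G_\mathcal{P}$ is closed, this gives $\overline{G} \leq G_\mathcal{P}$. For the reverse inclusion it suffices to show $\pi_n(G_\mathcal{P}) \subseteq \pi_n(G)$ for every $n$. Since $\mathcal{P}$ is a group, $G_\mathcal{P}$ is a group, and it is clearly self-similar. We argue by induction on $n \geq D$; the base case $n=D$ holds by the definition of $\mathcal{P}$.

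For the inductive step, let $h \in G_\mathcal{P}$ and choose $g \in G$ with $\pi_n(g) = \pi_n(h)$. Then $u := g^{-1}h \in G_\mathcal{P} \cap \St(n)$, and it is enough to find $k \in G$ with $\pi_{n+1}(k) = \pi_{n+1}(u)$. Put $m = n-D+1$. For each $v \in \mathcal{L}_m$ we have $u|_v^D \in \mathcal{P}$, and $u|_v^D$ acts trivially on the first $D-1$ levels because $u \in \St(n)$. Choose $g_v \in G$ with $\pi_D(g_v) = u|_v^D$. Then $g_v \in \St_G(D-1) \subseteq K$.

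Next we use the branching. From $K_1 \leq K$ we get by iteration $K_m \leq K \leq G$. Hence the element $k \in \St(m)$ whose section at each $v \in \mathcal{L}_m$ is $g_v$ lies in $G$. By construction $k$ agrees with $u$ on $T^{m+D} = T^{n+1}$. This completes the induction, so $\overline{G} = G_\mathcal{P}$ is of finite type, with depth at most $D$.

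The main obstacle is this lifting step. One must check carefully that each local pattern $u|_v^D$, which is trivial on $D-1$ levels, is realized by an element of $\St_G(D-1)$, and that these local realizations can be assembled independently at every vertex of level $m$. The hypothesis $\St_G(D-1) \subseteq K$ together with $K_1 \leq K$ is used exactly here.
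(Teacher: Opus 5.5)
Your proof is correct. The paper gives no argument of its own here, since the result is quoted from \v{S}uni\'c. Your lifting induction is a clean, self-contained version of the standard proof. Its engine is the inclusion $(\St_G(D-1))_{n-D+1} \leq K_{n-D+1} \leq G$, which is the branching half of \cref{lemma: St(n+1) = prod St(n)}.

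Your choice of hypotheses and your reading of the conclusion are also the right ones. Self-similarity really must be added. On the ternary tree, $\set{g \in \Aut(T) : g|_\emptyset^1 \in A_3}$ is closed and regular branch over $\St(1)$, so $D = 2$. It is not self-similar, hence not of finite type. Likewise, ``depth $D$'' can only mean ``depth at most $D$''. For transitive $H$, $W_H$ branches over $\St_{W_H}(1)$ yet has depth $1$. This is also how the paper uses the result in \cref{proposition: GQP finite type Hausdorff dimension}.

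The one caveat concerns your first paragraph. The paper's definition of regular branch includes level-transitivity. So \cref{proposition: finite type is closed self-similar and regular branch}, and with it your argument, only covers level-transitive groups of finite type. For example, the trivial group is of finite type but is not the closure of any regular branch group. This imprecision comes from the paper's formulation, not from your reasoning.
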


If for example $D = 1$, we are forcing the labels of each element in $G_\mathcal{P}$ to be in a certain subgroup $\mathcal{P}$ of $\Sym(d)$. In this case the group is isomorphic to the inverse limit of the \textit{wreath products} $\mathcal{P} \wr \mathcal{P} \wr \dots \wr \mathcal{P}$. We will denote these group by $W_\mathcal{P}$ and call it the \textit{iterated wreath product}. In the case of iterated wreath products, it is straightforward to calculate its Hausdorff dimension: 

\begin{Lemma}
\label{lemma: Hausdorff dimension iterated wreath product}
Let $T$ be a $d$-regular tree and $H \leq \Sym(d)$. Then $$\mathcal{H}(W_H) = \frac{\log(\abs{H})}{\log(d!)}$$
\end{Lemma}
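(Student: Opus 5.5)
The approach is to compute $\abs{\pi_n(W_H)}$ exactly and compare it with $\abs{\pi_n(\Aut(T))}$, which is the special case $H = \Sym(d)$.

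First, I would establish a counting formula. An element of $\pi_n(W_H)$ is determined by its portrait restricted to the vertices of levels $0, \dots, n-1$. Each such vertex carries a label in $H$, and by the definition of $W_H$ (the group of finite type of depth $1$ with pattern set $H$) these labels can be chosen independently. Indeed, any assignment of labels in $H$ to the vertices of $T^{n-1}$ extends to an element of $W_H$, for instance by labelling all deeper vertices with the identity. Since there are $1 + d + \dots + d^{n-1} = \frac{d^n - 1}{d-1}$ such vertices, we get
\[
\abs{\pi_n(W_H)} = \abs{H}^{\frac{d^n-1}{d-1}}.
\]
The only point to check is that distinct label assignments give distinct elements of $\pi_n(W_H)$. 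This holds because the portrait on $T^{n-1}$ determines the action on $T^n$ and conversely. Applying the same count with $H = \Sym(d)$, whose iterated wreath product is all of $\Aut(T)$, gives $\abs{\pi_n(\Aut(T))} = (d!)^{\frac{d^n-1}{d-1}}$.

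Second, taking logarithms, the exponents $\frac{d^n-1}{d-1}$ cancel for every $n$:
\[
\frac{\log \abs{\pi_n(W_H)}}{\log \abs{\pi_n(\Aut(T))}} = \frac{\frac{d^n-1}{d-1}\log\abs{H}}{\frac{d^n-1}{d-1}\log(d!)} = \frac{\log \abs{H}}{\log(d!)}.
\]
The sequence is therefore constant, and its $\liminf$ is $\frac{\log\abs{H}}{\log(d!)}$. Since $d \geq 2$ gives $\log(d!) > 0$, the quotient is well defined.

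There is no real obstacle here. The one thing to justify carefully is the independence of the labels, which is where the definition of $W_H$ as depth-$1$ finite type is used.
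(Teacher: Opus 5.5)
Your proposal is correct and follows essentially the same route as the paper: count the independent label choices in $H$ on the $\frac{d^n-1}{d-1}$ vertices of $T^{n-1}$ to get $\abs{\pi_n(W_H)} = \abs{H}^{\frac{d^n-1}{d-1}}$, specialize to $H = \Sym(d)$, and observe that the ratio of logarithms is constant in $n$. You are also more careful than the paper about why the labels are independent and why distinct assignments give distinct elements of $\pi_n(W_H)$.
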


\begin{proof}
Since at any vertex of the truncated tree $T^{n-1}$ we are allowed to put as label any element of $H$, we have $$\abs{\pi_n(W_H)} = \abs{\mathcal{H}}^{\sum_{i = 0}^{n-1} \# \mathcal{L}_i} = \abs{H}^{\frac{d^n-1}{d-1}},$$ since $\# \mathcal{L}_i = d^i$. This in particular applies when $H = \Sym(d)$ that corresponds to the whole groups of automorphisms as $W_{\Sym(d)} = \Aut(T)$. Therefore, 
\begin{align*}
\mathcal{H}(W_H) = \liminf_{n \rightarrow +\infty} \frac{\log(\abs{\pi_n(W_H)})}{\log(\abs{\pi_n(\Aut(T))})} = \frac{\log(\abs{H})}{\log(\abs{\Sym(d)})} = \frac{\log(\abs{H})}{\log(d!)}.
\end{align*}
\end{proof}

\subsection{Fixed-point proportion}

We define the fixed-point proportion of a subgroup $G$ of $\Aut(T)$ acting on a spherically homogeneous tree $T$ as 
\begin{equation}
\FPP(G) = \lim_{n \rightarrow \infty} \frac{\# \set{g \in \pi_n(G): \text{$g$ fixes a vertex in $\mathcal{L}_n$}}}{\abs{\pi_n(G)}}.
\label{equation: FPP definition}
\end{equation}

\begin{Lemma}
The fixed-point proportion of $G$ is well-defined, namely, the limit in \cref{equation: FPP definition} always exists. 
\label{lemma: FPP exists}
\end{Lemma}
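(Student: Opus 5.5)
The natural approach is a monotonicity argument. Set
\[
a_n := \frac{\#\set{g \in \pi_n(G): g \text{ fixes a vertex in } \mathcal{L}_n}}{\abs{\pi_n(G)}}
\]
and show that $(a_n)_{n\ge 1}$ is non-increasing. Since it is bounded below by $0$, the limit in \cref{equation: FPP definition} then exists and equals $\inf_n a_n$.

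To compare $a_{n+1}$ with $a_n$, I would use the restriction map $\rho_n: \pi_{n+1}(G) \to \pi_n(G)$, which forgets the action on level $n+1$. This map is a surjective group homomorphism, because $\pi_n = \rho_n \circ \pi_{n+1}$ on $G$. Every fibre $\rho_n^{-1}(h)$ is therefore a coset of $\ker \rho_n$, so all fibres have the same size $\abs{\pi_{n+1}(G)}/\abs{\pi_n(G)}$. In other words, $\rho_n$ pushes the uniform measure on $\pi_{n+1}(G)$ forward to the uniform measure on $\pi_n(G)$.

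The key combinatorial input concerns an element $g \in \pi_{n+1}(G)$ that fixes a vertex $vx \in \mathcal{L}_{n+1}$, with $v \in \mathcal{L}_n$. Since $g$ preserves adjacency, it maps the parent of $vx$ to the parent of $g(vx) = vx$. Hence $g$, and therefore $\rho_n(g)$, fixes $v$. This gives the inclusion
\[
\set{g \in \pi_{n+1}(G): g \text{ fixes a vertex of } \mathcal{L}_{n+1}} \subseteq \rho_n^{-1}\bigl(\set{h \in \pi_n(G): h \text{ fixes a vertex of } \mathcal{L}_n}\bigr).
\]

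Counting the right-hand side with equal fibre sizes gives exactly $a_n \abs{\pi_{n+1}(G)}$ elements, so $a_{n+1} \le a_n$. The only step that needs care is the equal-fibre claim, since it is what turns the inclusion of sets into an inequality of proportions. This is routine because $\rho_n$ is a homomorphism between finite groups.
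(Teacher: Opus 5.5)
Your proposal is correct and follows essentially the same argument as the paper: the paper also uses the surjective restriction homomorphism $\pi_{n+1}(G)\to\pi_n(G)$ with fibres of equal size $\abs{\pi_{n+1}(G)}/\abs{\pi_n(G)}$, states the key inclusion contrapositively (every lift of an element with no fixed vertex on $\mathcal{L}_n$ has no fixed vertex on $\mathcal{L}_{n+1}$), and concludes that the ratios are non-increasing and bounded below by $0$. Your parent-of-a-fixed-vertex justification makes explicit the one step the paper only asserts.
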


\begin{proof}
Consider the surjective map $\pi: \pi_{n+1}(G) \rightarrow \pi_n(G)$, namely, the projection of the action on level $n+1$ to level $n$. Notice that if $h \in \pi_n(G)$ does not fix any vertex at level $n$, then all the preimages of $h$ under $\pi$ will not fix any vertex at level $n+1$. Since $\pi$ is a group homomorphism, the number of preimages is $\abs{\pi_{n+1}(G)}/\abs{\pi_n(G)}$ for any point $h \in \pi_n(G)$ and consequently we have the inequality $$\abs{\pi_{n+1}(G)} - f_{n+1} \geq \frac{\abs{\pi_{n+1}(G)}}{\abs{\pi_n(G)}} \left(\abs{\pi_n(G)} - f_n \right),$$ where $$f_n = \# \set{g \in \pi_n(G): \text{$g$ fixes a vertex in $\mathcal{L}_n$}}.$$ Rewriting the inequality, we obtain $$\frac{f_n}{\abs{\pi_n(G)}} \geq \frac{f_{n+1}}{\abs{\pi_{n+1}(G)}},$$ that is exactly the sequence considered in the limit of the fixed-point proportion. Since the sequence is bounded below by zero and decreasing, the limit must exist.
\end{proof}

\begin{Lemma}
Let $G \leq \Aut(T)$ and $H \leq_f G$. Then
\begin{enumerate}
    \item $\FPP(G) = \FPP(\overline{G})$,
    \item $\FPP(H) \leq [G:H] \, \FPP(G)$. In particular, if $\FPP(G) = 0$ then $\FPP(H) = 0$.
\end{enumerate}
\label{lemma: FPP first properties}
\end{Lemma}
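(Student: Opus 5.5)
For (1), I will work level by level and show that the two finite quotients coincide. Since $\overline{G}$ is the closure of $G$ in the profinite topology and $\St(n)$ is open, the equality $\pi_n(G) = \pi_n(\overline{G})$ holds for every $n \in \NN$. Indeed, any $g \in \overline{G}$ has a point $h \in G \cap g\St(n)$, and then $\pi_n(h) = \pi_n(g)$. The ratios in \cref{equation: FPP definition} are therefore identical for every $n$, and so are their limits, which exist by \cref{lemma: FPP exists}.

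For (2), I will again compare at each level $n$. Set $H_n := \pi_n(H) \leq G_n := \pi_n(G)$. The first step is the index bound $[G_n:H_n] \leq [G:H]$, which holds because $\pi_n$ maps the cosets of $H$ in $G$ onto the cosets of $H_n$ in $G_n$. Next, write $f_n(H)$ and $f_n(G)$ for the numbers of elements in $H_n$ and $G_n$ that fix a vertex of $\mathcal{L}_n$. Since $H_n \subseteq G_n$, we have $f_n(H) \leq f_n(G)$. Combining the two facts gives
$$\frac{f_n(H)}{\abs{H_n}} \leq \frac{f_n(G)}{\abs{G_n}}\cdot\frac{\abs{G_n}}{\abs{H_n}} = [G_n:H_n]\,\frac{f_n(G)}{\abs{G_n}} \leq [G:H]\,\frac{f_n(G)}{\abs{G_n}}.$$
Letting $n \to \infty$ then yields $\FPP(H) \leq [G:H]\,\FPP(G)$. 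The statement that $\FPP(G) = 0$ implies $\FPP(H) = 0$ follows at once, since fixed-point proportions are nonnegative.

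There is no real obstacle in either part. The only points that need care are the density argument $\pi_n(G)=\pi_n(\overline{G})$ in (1) and the fact that the index can only drop under the projection $\pi_n$ in (2); both are routine.
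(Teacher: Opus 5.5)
Your proof is correct and follows the same route as the paper: $\pi_n(G)=\pi_n(\overline{G})$ for (1), and $f_n(H)\le f_n(G)$ combined with an index comparison at level $n$ for (2). The one difference is that you use $[\pi_n(G):\pi_n(H)]\le [G:H]$ for every $n$, whereas the paper asserts equality for $n$ large. Your inequality is the safer choice: equality can fail when $H$ is not closed in $G$ (e.g.\ a dense proper subgroup of finite index), while the inequality always holds and is all the argument needs.
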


\begin{proof}
For (1), we just observe that $\pi_n(G) = \pi_n(\overline{G})$ for all $n \in \NN$ and the definition of the fixed-point proportion only depends on $\pi_n(G)$. 

For (2), we know that if the index is finite, then there exists $n_0 \in \NN$ such that $[\pi_n(G): \pi_n(H)] = [G:H]$ for all $n \geq n_0$. On the other side, if $K \leq \Aut(T)$ and we let $f_n(K)$ denote the number of elements in $\pi_n(K)$ that fix a vertex in $\mathcal{L}_n$, we have that $f_n(H) \leq f_n(G)$ since $H \leq G$. Therefore 
\begin{align*}
\FPP(H) = \lim_{n \rightarrow +\infty} \frac{f_n(H)}{\abs{\pi_n(H)}} = [G:H] \lim_{n \rightarrow +\infty} \frac{f_n(H)}{\abs{\pi_n(G)}} \leq [G:H] \lim_{n \rightarrow +\infty} \frac{f_n(G)}{\abs{\pi_n(G)}} = \\ [G:H] \,  \FPP(G).
\end{align*}
\end{proof}

By \cref{lemma: Hausdorff dimension first properties}, we have that if $H \leq_f G$ and $G$ is infinite, then $\mathcal{H}_G(H) = 1$. Therefore, a natural generalization for \cref{lemma: FPP first properties}.(2) is to consider the case of $H \leq G$ such that $\mathcal{H}_G(H) = 1$. However, we have the following counterexample. Let $G \leq \Aut(T)$ be an infinite group with null fixed-point proportion and take the subgroup $H := \bigcap_{n \in \NN} \st_G(0^n)$, where $0^n$ is the leftmost vertex at level $n$. Then, we have $\pi_n(H) = \pi_n(\st_G(0^n))$ and therefore every element in $\pi_n(H)$ fixes $0^n$, so $\FPP(H) = 1$. By the orbit-stabilizer theorem, we have $[\pi_n(G): \pi_n(H)] \leq \# \mathcal{L}_n$ and consequently 
\begin{align*}
\mathcal{H}_G(H) \geq 1 - \frac{\log(\# \mathcal{L}_n)}{\log(\abs{\pi_n(G)})}
\end{align*}
Then, it is not hard to find a group $G$ such that the previous limit converges to one. For instance, $G = \Aut(T)$. 

Due to \cref{lemma: FPP first properties}.(1), we may assume that $G$ is a closed subgroup of $\Aut(T)$. Since $\Aut(T)$ is a profinite group and $G$ is closed, then $G$ is compact and has a unique probability Haar measure $\mu$.

A remarkable property of the Haar measure is that if $S$ is a measurable set in $\overline{G}$, then \begin{equation} \mu(S) = \lim_{n \to +\infty} \frac{\# \pi_n(S)}{\abs{\pi_n(\overline{G})}}. \label{equation: Haar measure limit}
\end{equation}

Given $G$, a closed subgroup of $\Aut(T)$, we define the \textit{fixed-point process} of $G$ as the sequence of functions $\set{X_n: G \rightarrow \NN}_{n \in \NN}$ such that $$X_n(g) = \#\set{v \in \mathcal{L}_n: g(v) = v}.$$ By the definition of the Haar measure, we have
\begin{align}
\begin{split}
\FPP(G) = \lim_{n \rightarrow \infty} \frac{\# \set{g \in \pi_n(G): \text{$g$ fixes a vertex in $\mathcal{L}_n$}}}{\abs{\pi_n(G)}} = \\ \mu(\set{g \in G: \text{ $g$ fixes one vertex in $\mathcal{L}_n$ for all $n \in \NN$}}) = \\ \mu \left(\bigcap_{n \geq 0} \set{g \in G: X_n(g) > 0} \right).
\end{split}
\label{equation: FPP with Haar meaure}
\end{align}

So the fixed-point process is involved in the calculation of the fixed-point proportion (see \cite{Jones2014} for more details). 

\begin{Lemma}
Let $T$ be a spherically homogeneous tree and $\set{G_n}_{n \in \NN}$ a sequence of subgroups of $\Aut(T)$ such that $G_n \leq G_{n+1}$ and write $G = \bigcup_{n \in \NN} G_n$. Then $\FPP(G_n) \xrightarrow[n \rightarrow +\infty]{} \FPP(G)$.
\label{lemma: continuity FPP}
\end{Lemma}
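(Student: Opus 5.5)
The plan is to compare the finite-level proportions of $G_n$ and of $G$, using that every finite quotient of $G$ is eventually reached by the $G_n$. For $H \leq \Aut(T)$ and $k \in \NN$ write $f_k(H)$ for the number of elements of $\pi_k(H)$ fixing a vertex of $\mathcal{L}_k$, and set $a_k(H) := f_k(H)/\abs{\pi_k(H)}$. By the proof of \cref{lemma: FPP exists}, $a_k(H)$ is non-increasing in $k$, so $\FPP(H) = \inf_k a_k(H)$.

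\emph{Stabilization.} Since $G = \bigcup_n G_n$ with $G_n \leq G_{n+1}$, we get $\pi_k(G) = \bigcup_n \pi_k(G_n)$, an increasing union of subgroups of the finite group $\Aut(T^k)$. Hence there is $N(k)$ with $\pi_k(G_n) = \pi_k(G)$ for all $n \geq N(k)$, and then $a_k(G_n) = a_k(G)$, because $a_k$ depends only on $\pi_k$.

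\emph{Upper bound.} For $n \geq N(k)$ we have $\FPP(G_n) \leq a_k(G_n) = a_k(G)$. Therefore $\limsup_n \FPP(G_n) \leq a_k(G)$ for every $k$, and letting $k \to \infty$ gives $\limsup_n \FPP(G_n) \leq \FPP(G)$.

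\emph{Lower bound (the main obstacle).} Here one needs $\liminf_n \FPP(G_n) \geq \FPP(G)$, which requires some uniformity in $n$ of the convergence $a_k(G_n) \downarrow \FPP(G_n)$. I would pass to Haar measures using \cref{lemma: FPP first properties}.(1) and \cref{equation: FPP with Haar meaure}. Let $\mu_n$ be the Haar measure of $\overline{G_n}$, viewed on $\overline{G}$, and $\mu$ that of $\overline{G}$. Put $F := \bigcap_k \set{g : X_k(g) > 0}$, a closed set, so that $\FPP(G_n) = \mu_n(F)$ and $\FPP(G) = \mu(F)$.

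By the stabilization step, $\mu_n$ and $\mu$ agree on every cylinder set of level $k$ once $n \geq N(k)$, so $\mu_n \to \mu$ weakly. For a closed set, the portmanteau theorem gives only $\limsup_n \mu_n(F) \leq \mu(F)$, which is the upper bound again.

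For the reverse inequality I would try two things. First, use the structure of the sequence to show that $\mu_n(F)$ is monotone in $n$, by conditioning the Haar measure of $\overline{G_{n+1}}$ on cosets of $\overline{G_n}$. Second, and more promising, reduce to cylinder approximations from inside: exhibit, for each $\varepsilon > 0$, a level $k$ and an $n_0$ such that $a_k(G_n) - \FPP(G_n) < \varepsilon$ for all $n \geq n_0$.

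The difference $a_k(H) - \FPP(H)$ is the Haar measure of the set of elements that fix a vertex at level $k$ but no end. I would try to bound this measure uniformly in $n$ via the decreasing inequality of \cref{lemma: FPP exists}, applied to the quotients $\pi_{k'}(G_n) = \pi_{k'}(G)$ for $k \leq k' \leq$ the stabilization range.

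I expect this uniformity to be the delicate point. If it fails in full generality, the argument still gives the inequality $\limsup_n \FPP(G_n) \leq \FPP(G)$, and I would check which of the two inequalities the later applications in the paper actually use.
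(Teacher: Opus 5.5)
Your stabilization step and the upper bound $\limsup_n \FPP(G_n) \leq \FPP(G)$ are correct. You also identified exactly the right obstruction: weak-* convergence $\hat\mu_n \to \mu$ only yields $\limsup_n \hat\mu_n(F) \leq \mu(F)$ for the closed set $F=\bigcap_k \set{X_k>0}$. The paper's proof takes precisely the step you were worried about. It deduces $\hat\mu_n(F)\to\mu(F)$ from weak-* convergence, and that step is not valid. In fact the missing inequality $\liminf_n \FPP(G_n)\geq \FPP(G)$ is false. So neither of your suggested repairs (monotonicity in $n$, or a bound on $a_k(G_n)-\FPP(G_n)$ uniform in $n$) can work, and the lemma as stated cannot be proved.

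Here is a counterexample. Let $d=4$, $\tau=(1\,2)(3\,4)$, $\mathcal{P}=\langle\tau\rangle$, $W=W_\mathcal{P}$, and
\[
G_n:=\set{g\in W : g|_v^1=g|_w^1 \text{ for all } v,w\in\mathcal{L}_i \text{ and all } i\geq n}.
\]
Since $(gh)|_v^1=g|_{h(v)}^1\,h|_v^1$ and $h(v)$ lies on the same level as $v$, each $G_n$ is a subgroup, and clearly $G_n\leq G_{n+1}$. Extending by trivial labels shows $\pi_n(G_n)=\pi_n(W)$. Hence $\pi_k(G)=\pi_k(W)$ for all $k$, and $\FPP(G)=\FPP(W)\in(0,1)$ by \cref{corollary: FPP iterated wreath products}, since $\mathcal{P}$ is intransitive and $\tau$ is a derangement; numerically $\FPP(W)\approx 0.456$.

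On the other hand, for $k>n$ an element of $\pi_k(G_n)$ consists of an element of $\pi_n(W)$ together with level constants $\epsilon_n,\dots,\epsilon_{k-1}\in\mathcal{P}$. If some $\epsilon_i=\tau$, then no vertex of $\mathcal{L}_{i+1}$ is fixed, hence none of $\mathcal{L}_k$. So $a_k(G_n)\leq 2^{-(k-n)}$ and $\FPP(G_n)=0$ for every $n$, while $\FPP(G)>0$.

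What goes wrong is that $F$ has empty interior in $W$. Replacing all labels of $g\in W$ on $\mathcal{L}_k$ by $\tau$ keeps $g$ in the same coset of $\St_W(k)$ but destroys all fixed ends. Thus $\partial F=F$ has positive Haar measure. Correspondingly, $a_k(G_n)-\FPP(G_n)=a_k(W)\geq\FPP(W)>0$ for all $n\geq k$, so the uniformity you hoped for fails.

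The correct statement is the upper semicontinuity you proved. Regarding your last remark: the lemma is only invoked in \cref{section: open questions}, so none of the main results depend on the false direction.
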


\begin{proof}
By \cref{lemma: FPP first properties}, we may assume that the subgroups $G_n$'s and $G$ are closed. Let $\mu_n$ be the Haar measure on $G_n$ and consider $\hat{\mu}_n$ the extension of $\mu_n$ to $G$ defined by $$\hat{\mu}_n(A) = \mu_n(A \cap G_n).$$ In particular, $$\FPP(G_n) = \hat{\mu}_n \left(\bigcap_{n \geq 0} \set{g \in G: X_n(g) > 0} \right)$$

Denote by $\mu$ the Haar measure on $G$. By \cite[Exercise 4, Chapter VII]{Katznelson1968}, the sequence $\hat{\mu}_n \xrightarrow[n \rightarrow +\infty]{} \mu$ in the weak-* topology, so 
\begin{align*}
\FPP(G_n) = \hat{\mu}_n \left(\bigcap_{n \geq 0} \set{g \in G: X_n(g) > 0} \right) \xrightarrow[n \rightarrow +\infty]{} \\ \mu \left(\bigcap_{n \geq 0} \set{g \in G: X_n(g) > 0} \right) = \FPP(G).  
\end{align*}
\end{proof}

\subsection{Iterated Galois groups}
\label{subsection: The galois of iterated rational functions}

Let $K$ be a field, and let $f$ be a rational function of degree at least $2$ (namely, the maximum degree of numerator and denominator is at least $2$) and with coefficients in $K$. We denote the $n$th iteration of $f$ by $f^n$. Let $t$ be a transcendental element over $K$ and fix $K(t)^{sep}$ a separable closure of $K(t)$. Let us denote $f^{-n}(t)$ the set of preimages of $t$ by $f^n$ inside $K(t)^{sep}$ and $K_n(f) := K(f^{-n}(t))$. Then, $K_n(f)/K(t)$ is a Galois extension, and $K_n(f) \subseteq K_{n+1}(f)$ for all $n \geq 0$ (see \cite{Jones2014}). Define $K_\infty(f) := \bigcup_{n \geq 0} K_n(f)$. Then, $K_\infty(f)/K(t)$ is also a Galois extension. For $n \in \NN \cup \set{\infty}$, define also $$G_n(K,f,t) := \Gal(K_n(f)/K(t)).$$ By the definition of $K_\infty(f)$ we have $G_\infty(K,f,t) = \varprojlim G_n(K,f,t)$. So, in particular $G_\infty(K,f,t)$ is a profinite group that acts naturally on $f^{-n}(t)$ by permuting its elements. We call $G_\infty(K,f,t)$ the \textit{iterated Galois group} of $f$ at $t$.

\begin{Lemma}
Let $K$ be a field of characteristic zero, $f$ a rational function in $K(x)$ of degree $d$ and $t$ transcendental over $K$. Then $\# f^{-n}(t) = d^n$.
\label{lemma: t transcendental d^n preimages}
\end{Lemma}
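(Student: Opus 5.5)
We argue by induction on $n$, and the whole proof reduces to showing that $f^n(x) - t$ is separable over $K(t)$. The case $n = 0$ is trivial, since $f^{-0}(t) = \set{t}$. The set $f^{-n}(t)$ is the set of roots in $K(t)^{sep}$ (equivalently in an algebraic closure) of the equation $f^n(x) = t$. Write $f^n = P_n/Q_n$ with $P_n, Q_n \in K[x]$ coprime. The preimages are then the roots of $P_n(x) - tQ_n(x) \in K(t)[x]$, possibly together with $\infty$ if $f^n(\infty) = t$. This cannot happen, however, because $f^n(\infty) \in K \cup \set{\infty}$ while $t$ is transcendental.

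First we check the degree. The polynomial $P_n(x) - tQ_n(x)$ has degree exactly $\max(\deg P_n, \deg Q_n) = \deg f^n = d^n$ in $x$. No cancellation can occur, because $t$ is transcendental and so the coefficients $p - tq$ vanish only if $p = q = 0$. Here we use that the degree of a composition of rational functions multiplies, so $\deg f^n = d^n$.

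Next we show that this polynomial has no repeated roots. We view $F(x,t) := P_n(x) - tQ_n(x)$ as a polynomial in $K[t][x]$. It has degree $1$ in $t$, and it is primitive in $t$ since $\gcd(P_n, Q_n) = 1$. Hence it is irreducible in $K[x,t]$, and by Gauss's lemma it is irreducible in $K(t)[x]$. In characteristic zero an irreducible polynomial is separable, so it has exactly $d^n$ distinct roots in $K(t)^{sep}$.

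To finish, we note that $f^{-n}(t)$, as defined in the paper, is exactly this root set, and the identification of $f^{-n}(t)$ with the $n$-th level of the tree follows. The main obstacle is justifying that the root set of $P_n - tQ_n$ is all of $f^{-n}(t)$ and that it has exactly $d^n$ elements. This requires care at points $\alpha$ where $Q_n(\alpha) = 0$, but those satisfy $P_n(\alpha) \neq 0$ by coprimality, so they are not roots. Once that is settled, the irreducibility-plus-characteristic-zero argument handles separability directly.
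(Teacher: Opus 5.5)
Your proof is correct and follows the paper's argument: write $f^n = P_n/Q_n$ in lowest terms, note $\deg_x(P_n - tQ_n) = d^n$, deduce irreducibility in $K(t)[x]$ from Gauss's lemma and the fact that $P_n - tQ_n$ has degree $1$ in $t$ with coprime coefficients, and conclude separability from characteristic zero. Your remarks on poles and on $\infty$ fill in details the paper leaves implicit, while the induction on $n$ you announce is never used and can be dropped.
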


\begin{proof}
Write $f^n(z) = p_n(z)/q_n(z)$ with $p_n,q_n \in K[x]$ such that $\gcd(p_n,q_n) = 1$. Then $z \in f^{-n}(t)$ if and only if we have $p_n(z) - t q_n(z) = 0$. As $\deg(f^n) = d^n$, then $\deg_x(p_n(x) - t q_n(x)) = d^n$, so in order to prove that $\# f^{-n}(t) = d^n$, it is enough to prove that $p_n(x) - t q_n(x)$ is irreducible as a polynomial in $K(t)[x]$ since $K$ is a field of characteristic zero. 

Write $p_n(x) - t q_n(x) = A(x,t) B(x,t)$. As the left hand side is a polynomial in both variables, we may assume $A,B \in K[x,t]$. The left hand side has degree $1$ on $t$, therefore, without loss of generality, we may assume $A \in K[x]$ and $B(x,t) = b_0(x) + tb_1(x)$. Then $Ab_0 = p_n$ and $Ab_1 = q_n$. As $\gcd(p_n, q_n) = 1$, then $A \in K$. 
\end{proof}

In light of \cref{lemma: t transcendental d^n preimages}, we can create a $d$-regular rooted tree $T$ by putting the $n$-th preimages of $t$ by $f$ on the $n$-th level of the tree and connecting a vertex $v \in \mathcal{L}_n$ and a vertex $w \in \mathcal{L}_{n+1}$ if and only if $f(w) = v$. Then, $G_\infty(K,f,t)$ acts on $T$ faithfully and preserving adjacency (see again \cite{Jones2014} for a proof of this fact). Therefore, the iterated Galois group can be embedded as a closed subgroup of $\Aut(T)$ .

\section{The fixed-point proportion of iterated wreath products}
\label{section: FPP of IWP}

We can extend the definition of the fixed-point proportion given in \cref{equation: FPP definition} to any set $X \subseteq \Aut(T)$ in the following way:
\begin{equation}
\FPP(X) := \lim_{n \rightarrow \infty} \frac{\# \set{x\in \pi_n(X): \text{$x$ fixes a vertex in $\mathcal{L}_n$}}}{\abs{\pi_n(X)}}.
\label{equation: definition FPP for sets}
\end{equation}
Contrary to \cref{lemma: FPP exists}, the existence of the limit above is not guaranteed for every set $X$.

Let $S$ be any subset of $\Sym(d)$ and define $$W_S := \set{g \in \Aut(T): g|_v^1 \in S, \forall v \in T}.$$ Notice that if $S = H$ is a subgroup of $\Sym(d)$, the set $W_H$ corresponds to the definition of iterated wreath product given in \cref{equation: finite type definition}. For this reason, we will also refer to $W_S$ as iterated wreath products, even though they are not necessarily subgroups of $\Aut(T)$. We will devote this section to proving that $\FPP(W_S)$ always exists and give a way to calculate it for any set $S$. 

Let $f_n$ denote the number of elements in $\pi_n(W_S)$ that fix at least one vertex in $\mathcal{L}_n$, write $\sigma_n = \abs{\pi_n(W_S)}$ and $p_n$ be the proportion of them, namely, $p_n = \frac{f_n}{\sigma_n}$. The idea will be to give a recurrence formula for $p_n$ that allows us to calculate the fixed-point proportion of $W_S$ as $$\FPP(W_S) = \lim_{n \rightarrow + \infty} p_n.$$  By definition of $W_S$, we have $\sigma_{n+1} = \sigma_n^d \,\, (\# S)$, because at each vertex of level $1$ we can put any element of $\pi_n(W_S)$ and then at the root we have all the options of $S$. Another way to think of $\sigma_{n+1}$ is that we have $\sigma_n$ options for the first $n$ levels and then at each vertex of level $n$ we can put as label any element of $S$, so $\sigma_{n+1} = \sigma_n \,\, (\#S)^{\# \mathcal{L}_n} = \sigma_n \,\, (\# S)^{d^n}$.

\begin{Lemma}
Given $S \subseteq \Sym(d)$, then $\FPP(W_S)$ always exists.
\end{Lemma}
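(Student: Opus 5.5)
We mimic the monotonicity argument of \cref{lemma: FPP exists}, adapted to sets. The key structural fact is that $W_S$ is defined by local conditions at each vertex. Hence $\pi_{n+1}(W_S)$ consists exactly of the portraits obtained from an element of $\pi_n(W_S)$ by putting an arbitrary label from $S$ at each vertex of $\mathcal{L}_n$. So the restriction map $\pi:\pi_{n+1}(W_S)\to\pi_n(W_S)$ is surjective, and every fiber has the same cardinality $(\#S)^{d^n}$. This is consistent with the identity $\sigma_{n+1}=\sigma_n\,(\#S)^{d^n}$ noted above. If $S=\emptyset$, then $W_S$ is empty and we may adopt any convention, so assume $S\neq\emptyset$.

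Next we show that $p_n$ is non-increasing. Suppose $x\in\pi_{n+1}(W_S)$ fixes a vertex $vw\in\mathcal{L}_{n+1}$ with $v\in\mathcal{L}_n$. Since automorphisms preserve adjacency to the root, $x$ then fixes $v$. Therefore $\pi(x)$ fixes a vertex of $\mathcal{L}_n$. Contrapositively, every element of the fiber over a fixed-point-free element of $\pi_n(W_S)$ is fixed-point-free. Counting fibers gives
\[
\sigma_{n+1}-f_{n+1}\geq (\#S)^{d^n}(\sigma_n-f_n)=\frac{\sigma_{n+1}}{\sigma_n}(\sigma_n-f_n).
\]
Dividing by $\sigma_{n+1}$ yields $p_{n+1}\le p_n$.

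Since $p_n\in[0,1]$ is non-increasing and bounded below, the limit $\lim_n p_n=\FPP(W_S)$ exists.

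The only point needing care, and the reason \cref{lemma: FPP exists} cannot be quoted directly, is that $W_S$ need not be a group. The equal-fiber property therefore cannot come from $\pi$ being a homomorphism. It has to be checked directly from the product structure of the portraits: the labels at level $n$ are chosen independently in $S$ and impose no constraint on the labels above. Once this is verified, the rest of the argument carries over verbatim.
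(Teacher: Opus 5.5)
Your proposal is correct and follows essentially the same route as the paper: you observe that every extension of a fixed-point-free element of $\pi_n(W_S)$ is fixed-point-free, and that each element has exactly $(\#S)^{d^n}$ extensions, which gives $p_{n+1}\le p_n$ and hence convergence. Your explicit remark that the equal-fiber count must come from the independent choice of labels rather than from a homomorphism, together with the treatment of $S=\emptyset$, is a welcome addition to the paper's terser argument.
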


\begin{proof}
To prove that $\FPP(W_S)$ exists, we follow the argument given in \cref{lemma: FPP exists}. Notice that if an element in $\pi_n(W_S)$ does not fix any vertex in level $n$, then any extension to level $n+1$ of that element will not fix any vertex in level $n+1$ either. The number of extensions of each element is $(\#S)^{\# \mathcal{L}_n} = (\#S)^{d^n}$, therefore $$\sigma_{n+1} - f_{n+1} \geq (\#S)^{d^n} (\sigma_n - f_n) = \frac{\sigma_{n+1}}{\sigma_n}(\sigma_n - f_n) = \sigma_{n+1}(1-p_n).$$ Reordering terms, we get $p_{n+1} \leq p_n$. Thus, $\set{p_n}_{n \in \NN}$ is a decreasing sequence bounded by zero and therefore $\FPP(W_S)$ exists.
\end{proof}

\begin{Definition}
Let $d \in \NN$ and $S \subseteq \Sym(d)$. Define $D_S(k)$ as the number of permutations in $S$ having exactly $k$ fixed points.
\label{definition: derangements}
\end{Definition}

This definition generalizes the concept of the number of derangements for any set (see \cite{Hassani2003}). Clearly, $$\sum_{k = 0}^d D_S(k) = \# S.$$

Our next step will be to find a recurrence formula for $f_{n+1}$. Let $s \in S$ be a label for the root with $k$ fixed points. If $k = 0$, the resulting elements will not produce fixed points at level $n+1$, so we assume that $k > 0$. 

An element $g \in \pi_{n+1}(W_S)$ whose label at the root is $s$ will have fixed points at level $n+1$ only in the subtrees fixed by $s$. In the remaining $d-k$ subtrees, any element of $\pi_n(W_S)$ can be placed freely, contributing a factor of $\sigma_n^{d-k}$. 

In the $k$ fixed subtrees, we need to place at least one element of $\pi_n(W_S)$ that fixes a vertex, while the rest may be arbitrary. To avoid repetition, we consider the disjoint sets $$F_n = \set{h \in \pi_n(W_S): \text{$h$ has fixed points on $\mathcal{L}_n$}} \text{ and } F_n^c.$$ We then sum in all the possible $k$-fold Cartesian products of $F_n$ and $F_n^c$, excluding the product $F_n^c \times \dots \times F_n^c$, which does not produce elements with fixed points at level $n+1$. 

To express this formally, let $\beta \in \set{\pm 1}$ and define 
\begin{align*}
\overline{f_n}^\beta = \left \{ \begin{matrix} f_n & \mbox{if $\beta = 1$,} \\ 
\sigma_n - f_n & \mbox{if $\beta = -1$.}
\end{matrix}\right.
\end{align*}

We conclude the following formula: 
\begin{equation}
f_{n+1} = \sum_{k = 1}^d D_S(k) \sigma_n^{d-k} \sum_{(\beta_1,\dots,\beta_k) \in \set{\pm 1}^k \setminus \set{(-1,\dots,-1)}} \prod_{i = 1}^k \overline{f_n}^{\beta_i}
\label{equation: fn+1 first deduction}
\end{equation}

The following lemma will help to clarify the intricate formula:

\begin{Lemma}
\label{lemma: formula fn+1}
Let $d \in \NN$ and $S \subseteq \Sym(d)$. Using the notation introduced earlier, we have
\begin{equation}
f_{n+1} = \sum_{k = 1}^d D_S(k) \sigma_n^{d-k} (\sigma_n^k - (\sigma_n - f_n)^k).
\label{equation: fn+1 second deduction}
\end{equation}
\end{Lemma}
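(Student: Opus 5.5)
The plan is to start from \cref{equation: fn+1 first deduction} and simplify the inner sum for each fixed $k$ separately, using a binomial-type identity. Fix $k \geq 1$ and set $a = f_n$ and $b = \sigma_n - f_n$, so that $\overline{f_n}^{1} = a$ and $\overline{f_n}^{-1} = b$. The inner sum runs over all sign vectors $(\beta_1,\dots,\beta_k)$ except the all-$(-1)$ vector. Its summand is $\prod_{i=1}^k \overline{f_n}^{\beta_i}$.

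The key step is to expand $(a+b)^k$ as a sum over all $2^k$ ordered choices, picking $a$ or $b$ in each of the $k$ factors, without collecting terms. This gives exactly
$$(a+b)^k = \sum_{(\beta_1,\dots,\beta_k) \in \set{\pm 1}^k} \prod_{i=1}^k \overline{f_n}^{\beta_i}.$$
Removing the single term indexed by $(-1,\dots,-1)$, which equals $b^k$, shows that the inner sum equals $(a+b)^k - b^k = \sigma_n^k - (\sigma_n - f_n)^k$, since $a + b = \sigma_n$.

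Substituting this into \cref{equation: fn+1 first deduction} term by term in $k$ yields \cref{equation: fn+1 second deduction}.

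As a sanity check on the combinatorial meaning, $\sigma_n^k - (\sigma_n - f_n)^k$ counts the $k$-tuples of elements of $\pi_n(W_S)$, one for each subtree fixed by the root label, in which at least one entry lies in $F_n$. This is exactly the condition for the element to fix a vertex at level $n+1$. One could even bypass \cref{equation: fn+1 first deduction} and argue directly by complementary counting.

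There is no real obstacle here. The only point needing care is that the expansion of $(a+b)^k$ is taken over ordered sign vectors rather than with binomial coefficients, so that it matches the indexing in \cref{equation: fn+1 first deduction} exactly.
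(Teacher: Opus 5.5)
Your proof is correct and is essentially the paper's argument. The paper first groups the $2^k$ sign vectors by the number $t$ of entries equal to $1$ and applies the binomial theorem to $\sum_{t=0}^k \binom{k}{t} f_n^t(\sigma_n - f_n)^{k-t} = \sigma_n^k$, whereas you read off the same identity directly from the uncollected expansion of $(a+b)^k$; both then subtract the single all-$(-1)$ term $(\sigma_n - f_n)^k$ and substitute into \cref{equation: fn+1 first deduction}.
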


\begin{proof}
First, consider the sum $$\sum_{(\beta_1,\dots,\beta_k) \in \set{\pm 1}^k} \prod_{i = 1}^k \overline{f_n}^{\beta_i}.$$ Fix $t \in \set{0,\dots,k}$. Notice that the number of terms of the form $f_n^t (\sigma_n - f_n)^{k-t}$ corresponds to the number of vectors $(\beta_1,\dots,\beta_k)$ having $t$ entries equal to $1$. To count the number of such vectors, we can think of $\set{1,\dots,k}$ as the set of indices, and we are selecting $t$ of them to assign the value $1$. There are exactly $\binom{k}{t}$ ways to do this. Hence, $$\sum_{(\beta_1,\dots,\beta_k) \in \set{\pm 1}^k} \prod_{i = 1}^k \overline{f_n}^{\beta_i} = \sum_{t = 0}^k \binom{k}{t} f_n^t(\sigma_n - f_n)^{k-t}.$$

By the binomial theorem, $$\sum_{t = 0}^k \binom{k}{t} f_n^t(\sigma_n - f_n)^{k-t} = (f_n + (\sigma_n - f_n))^k = \sigma_n^k.$$

Now, considering the term $(\beta_1,\dots,\beta_k) = (-1,\dots,-1)$ separately, we obtaining: 
$$\sum_{(\beta_1,\dots,\beta_k) \in \set{\pm 1}^k \setminus \set{(-1,\dots,-1)}} \prod_{i = 1}^k \overline{f_n}^\beta_i = \sigma_n^k - (\sigma_n - f_n)^k.$$

Substituting this result into \cref{equation: fn+1 first deduction},
$$f_{n+1} = \sum_{k = 1}^d D_S(k) \sigma_n^{d-k} (\sigma_n^k - (\sigma_n - f_n)^k)$$

and the result follows.
\end{proof}

Dividing \cref{equation: fn+1 second deduction} by $\sigma_{n+1}$, using its recurrence formula $\sigma_{n+1} = \sigma_n^d \,\, \# S$ and replacing $\frac{f_n}{\sigma_n}$ with $p_n$, we obtain

\begin{equation}
p_{n+1} = \frac{f_{n+1}}{\sigma_{n+1}} = \sum_{k = 1}^d \frac{D_S(k)}{\# S} \left( 1 - \left( 1 - p_n \right)^k \right).
\label{equation: pn+1 firt deduction}
\end{equation}

\begin{Definition}
Let $d \in \NN$ and $S \subseteq \Sym(d)$. We define the \textit{characteristic polynomial} of $S$ as $$f_S(x) := \sum_{k = 1}^d \frac{D_S(k)}{\# S} \left( 1 - \left( 1 - x \right)^k \right).$$
\label{definition: characteristic polynomial}
\end{Definition}

\begin{Remark}
Using \cref{definition: characteristic polynomial} and \cref{equation: pn+1 firt deduction} we conclude that $$p_{n+1} = f_S(p_n).$$ Taking limit when $n \rightarrow +\infty$, we deduce that $\FPP(W_S)$ is a fixed point of the characteristic polynomial of $f_S$. 
\label{Remark: FPP is fixed point fS}
\end{Remark}

\begin{Remark}
In \cite{Juul2018}, Juul considered the indicatrix function $\Phi_\Gamma$ to prove \cite[Theorem 1.2]{Juul2018}, where $\Gamma$ is any set of $\Sym(d)$. The functions $\Phi_\Gamma$ and $f_S$ are in fact related by the formula $$f_S(x) = 1 - \Phi_S(1-x),$$ which justifies the similar properties that they have.
\end{Remark}

\begin{Lemma}
Let $d \in \NN$ and $S \subseteq \Sym(d)$. The characteristic polynomial $f_S$ depends only on the conjugacy class of $S$ in $\Sym(d)$.
\label{lemma: characteristic polynomial conjugacy class}
\end{Lemma}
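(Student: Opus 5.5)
The plan is to show that every coefficient $D_S(k)/\# S$ appearing in $f_S$ is unchanged when $S$ is replaced by a conjugate $\tau S \tau^{-1}$ with $\tau \in \Sym(d)$. By \cref{definition: characteristic polynomial}, $f_S$ is determined by the numbers $\# S$ and $D_S(k)$ for $k = 1,\dots,d$. So it suffices to prove two equalities for every $k$:
$$\#(\tau S\tau^{-1}) = \# S \quad\text{and}\quad D_{\tau S \tau^{-1}}(k) = D_S(k).$$

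First, conjugation $c_\tau: s \mapsto \tau s \tau^{-1}$ is a bijection of $\Sym(d)$, with inverse $c_{\tau^{-1}}$. Its restriction is therefore a bijection $S \to \tau S \tau^{-1}$, which gives the equality of cardinalities.

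Second, we check that $c_\tau$ preserves the number of fixed points. If $s(i) = i$, then $\tau s \tau^{-1}(\tau(i)) = \tau(i)$. Thus $\tau$ maps the fixed-point set of $s$ into the fixed-point set of $\tau s\tau^{-1}$. Applying the same argument with $\tau^{-1}$ gives the reverse inclusion, so $\tau$ restricts to a bijection between the two fixed-point sets.

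Hence $c_\tau$ restricts to a bijection from the permutations in $S$ with exactly $k$ fixed points onto the permutations in $\tau S\tau^{-1}$ with exactly $k$ fixed points. This gives $D_{\tau S \tau^{-1}}(k) = D_S(k)$ for every $k$. Substituting both equalities into the formula for $f_S$ term by term yields $f_{\tau S\tau^{-1}} = f_S$.

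There is no real obstacle. The only point needing care is the fixed-point bijection, and the argument above handles it.
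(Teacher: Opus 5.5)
Your proof is correct and follows the same approach as the paper: conjugation by $\tau$ maps the fixed-point set of $s$ bijectively onto that of $\tau s\tau^{-1}$, so $D_{\tau S\tau^{-1}}(k)=D_S(k)$ for every $k$. You also state explicitly that $\#(\tau S\tau^{-1})=\#S$; the paper leaves this implicit, but it is needed because the coefficients of $f_S$ are normalized by $\#S$.
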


\begin{proof}
Let $S'$ be a set in $\Sym(d)$ such that $S' = gSg^{-1}$ for some $g \in \Sym(d)$. For $x \in \set{1\dots,d}$ and $s \in S$, observe that $x$ is fixed by $gsg^{-1}$ if and only if $g^{-1}(x)$ is fixed by $s$. Since $g$ is a permutation, this implies that $D_{S'}(k) = D_S(k)$ for all $k = 0,\dots,d$. Consequently, $f_{S'} = f_S$.
\end{proof}

\begin{Proposition}
Let $d \in \NN$, $S \subseteq \Sym(d)$, and $f_S: [0,1] \rightarrow \RR$ be the characteristic polynomial of $S$. Then $f_S$ has the following properties:
\begin{enumerate}
\item $f_S(x) \in [0,1]$ for all $x \in [0,1]$ and $f_S(0) = 0$. Moreover, if $\deg(f_S) > 0$, then $f_S(x) = 0$ if and only if $x = 0$.
\item $f_S'(x) \geq 0$, and if $\deg(f_S) > 0$, then $f_S'(x) > 0$ for $x \in [0,1)$.
\item $f_S''(x) \leq 0$ for $x \in [0,1]$, and strictly negative in $[0,1)$ if $\deg(f_S) > 1$.
\item If $f_S$ is not the identity function, the equation $f_S(x) = x$ has at most two solutions in $[0,1]$, with $x = 0$ being one of them.
\end{enumerate}
\label{proposition: properties fS}
\end{Proposition}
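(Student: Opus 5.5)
The plan is to rewrite $f_S$ in terms of $y = 1-x$ and argue term by term. Set $a_k := D_S(k)/\# S \geq 0$ for $k = 1,\dots,d$, and note that $\sum_{k=1}^d a_k \leq 1$, since the $D_S(k)$ with $k \geq 0$ sum to $\# S$. Then
\begin{align*}
f_S(x) = \sum_{k=1}^d a_k\bigl(1-(1-x)^k\bigr), \quad f_S'(x) = \sum_{k=1}^d k a_k (1-x)^{k-1}, \quad f_S''(x) = -\sum_{k=2}^d k(k-1) a_k (1-x)^{k-2}.
\end{align*}
The leading coefficient of $f_S$ in $x$ is $\pm a_K$, where $K$ is the largest $k$ with $a_k \neq 0$. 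Hence $\deg(f_S) > 0$ exactly when some $a_k > 0$ with $k \geq 1$, and $\deg(f_S) > 1$ exactly when some $a_k > 0$ with $k \geq 2$. These two equivalences are what convert the hypotheses of (1)--(3) into positivity statements about the coefficients.

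For (1), each term $1-(1-x)^k$ lies in $[0,1]$ for $x \in [0,1]$ and vanishes at $x=0$. Since $f_S$ is a sub-convex combination of these terms, $f_S(x) \in [0,1]$ and $f_S(0)=0$. If $\deg(f_S) > 0$, some $a_k > 0$, and its term is strictly positive for $x \in (0,1]$, so $f_S(x)=0$ forces $x=0$.

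For (2), $f_S' \geq 0$ on $[0,1]$ because every summand is nonnegative. If some $a_k > 0$, the summand $k a_k (1-x)^{k-1}$ is positive on $[0,1)$, so $f_S' > 0$ there.

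For (3), every summand of $f_S''$ is nonpositive on $[0,1]$. If some $a_k > 0$ with $k \geq 2$, its summand is strictly negative on $[0,1)$.

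Statement (4) is the only part needing an argument rather than inspection, and it is where I expect the main obstacle. I would study $g(x) = f_S(x) - x$, which satisfies $g(0)=0$ by (1), and split into cases.

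If $\deg(f_S) \geq 2$, then by (3) $g$ is strictly concave on $[0,1)$, hence on $[0,1]$ by continuity. A strictly concave function cannot have three zeros $x_1<x_2<x_3$: the middle value $g(x_2)=0$ would lie strictly above the chord joining $(x_1,0)$ and $(x_3,0)$, which is identically zero. So there are at most two solutions, and $0$ is one of them.

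If $\deg(f_S) \leq 1$, then $f_S(x) = a_1 x$. Since $f_S$ is not the identity, $a_1 \neq 1$, so $0$ is the only fixed point.

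The delicate point is that concavity is strict only on $[0,1)$. I would handle it with the continuity argument above, or by checking directly that $x=1$ cannot be a third zero, since $g$ strictly concave on $[0,1)$ already has at most two zeros there.
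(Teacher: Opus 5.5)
Your proposal is correct and follows essentially the same route as the paper: a termwise sign analysis of $f_S$, $f_S'$ and $f_S''$ for (1)--(3), and strict concavity of $f_S(x)-x$ for (4). The paper argues (4) slightly differently, taking the smallest positive fixed point and showing $f_S(y)<y$ beyond it, but the underlying idea is the same, and your separate treatment of $\deg(f_S)\le 1$ and of the endpoint $x=1$ tidies up points the paper passes over. Rely on the continuity extension for the endpoint; your alternative remark does not work as stated, because having at most two zeros on $[0,1)$ does not exclude a third zero at $1$.
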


\begin{proof}
For (1), since $x \in [0,1]$, we have $0 \leq 1 - (1-x)^k \leq 1$, and it equals zero if and only if $ x= 0$ for all $k$. Furthermore, $D_S(k) \geq 0$ for all $k$, so $$0 \leq f_S(x) \leq \sum_{k = 1}^d \frac{D_S(k)}{\# S} \leq \sum_{k = 0}^d \frac{D_S(k)}{\# S} = 1.$$ The only exception occurs when $\deg(f_S) < 1$, where $f_S(x) = 0$.

\bigskip

For (2), the derivative of $f_S$ is $$f_S'(x) = \sum_{k = 1}^d \frac{D_S(k) k}{\# S} (1-x)^{k-1} \geq 0$$ for $x \in [0,1]$. Moreover, if $\deg(f_S) > 0$, then $(1-x)^{k-1} > 0$ for $x \in [0,1)$, making $f_S'(x)$ is strictly positive in $[0,1)$. 

\bigskip

For (3), the second derivative of $f_S$ is: $$f_S''(x) = - \sum_{k = 2}^d \frac{D_S(k)}{\# S} k(k-1) (1-x)^{k-2} \leq 0$$ for $x \in [0,1]$. If $\deg(f_S) > 1$, then $D_S(k) > 0$ for some $k \geq 2$, and $f_S''(x) < 0$ for $x \in [0,1)$.

\bigskip 

For (4), we use the strict concavity of $f_S$ established in (3). Concaveness means that for all $x,y$ in the domain and $\alpha \in [0,1]$, we have $$f_S((1-\alpha)x + \alpha y ) > (1-\alpha) f_S(x) + \alpha f_S(y).$$ Suppose that $f_S$ has another fixed-point different to $0$ in $[0,1]$ and call $x_1$ the smallest possible. If $x_1 = 1$ we are done, if not, let $y \in (x_1,1]$, $x = 0$ and take $\alpha \in (0,1)$ such that $(1-\alpha)x + \alpha y = \alpha y = x_1$. Then, $$f_S(x_1) = x_1 > \alpha f_S(y).$$ Dividing by $\alpha$, we obtain that $y > f_S(y)$ and this for all $y > x_1$, so we at most have two fixed points.
\end{proof}

We are now ready to conclude the main result about the fixed-point proportion of $W_S$:

\begin{theorem}
Let $d \in \NN$ and $S \subseteq \Sym(d)$. Then: 
\begin{enumerate}
    \item $\FPP(W_S)$ equals the largest fixed point of $f_S$ in $[0,1]$.
    \item $\FPP(W_S)$ is the solution of a polynomial of degree at most $d-1$ with coefficients in $\ZZ\left[\frac{1}{\# S}\right]$. 
    \item $\FPP(W_S) = 0$ if and only if $f_S'(0) \leq 1$ and $f_S$ is not the identity function.
    \item $\FPP(W_S) = 1$ if and only if every element in $S$ fixes at least one element in $\set{1,\dots,d}$.
\end{enumerate}
\label{theorem: FPP(WS)}
\end{theorem}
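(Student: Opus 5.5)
The whole argument runs through the dynamical system $p_{n+1} = f_S(p_n)$ of \cref{Remark: FPP is fixed point fS}, started at $p_0 = 1$: the only vertex of $\mathcal{L}_0$ is the root, and it is always fixed. By \cref{proposition: properties fS}, $f_S$ maps $[0,1]$ into itself, is nondecreasing and concave, and satisfies $f_S(0)=0$.

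For (1), let $x^*$ be the largest fixed point of $f_S$ in $[0,1]$; it exists because $0$ is fixed and the fixed-point set is closed. I will show by induction that $p_n \geq x^*$ for all $n$. The base case is $p_0 = 1 \geq x^*$. For the inductive step, monotonicity gives $p_{n+1} = f_S(p_n) \geq f_S(x^*) = x^*$. The sequence $(p_n)$ is decreasing, so its limit $L$ is a fixed point of $f_S$ by continuity, and $L \geq x^*$. Maximality of $x^*$ then forces $L = x^*$. If $f_S$ is the identity this still holds, since then $p_n = 1 = x^*$.

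For (2), I rewrite the fixed-point equation as $f_S(x) - x = 0$. Since $f_S(0)=0$, the polynomial $f_S(x)-x$ is divisible by $x$, and $g(x) := (f_S(x)-x)/x$ has degree at most $d-1$. Its coefficients are $\ZZ$-combinations of the numbers $D_S(k)/\#S$, so they lie in $\ZZ[1/\#S]$. If $x^*>0$, then $x^*$ is a root of $g$. If $x^*=0$, I simply take the polynomial $x$ itself. The phrase ``algebraic integer'' in the introduction should be read loosely; I would only claim algebraicity.

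For (3), I compute $f_S'(0) = \sum_k k D_S(k)/\#S$. Suppose first that $f_S$ is not the identity and $f_S'(0) \leq 1$. There are two cases.
\begin{itemize}
\item If $\deg f_S \geq 2$, strict concavity gives $f_S(x) < f_S(0) + f_S'(0)x \leq x$ for $x \in (0,1]$. Hence $0$ is the only fixed point, and (1) gives $\FPP(W_S)=0$.
\item If $\deg f_S \leq 1$, then $f_S(x) = ax$ with $a = f_S'(0) \leq 1$, and $a \neq 1$ because $f_S$ is not the identity. So $a<1$ and $0$ is again the only fixed point.
\end{itemize}
Conversely, if $f_S$ is the identity, then $\FPP(W_S)=1$. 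If $f_S'(0)>1$, then $f_S(x) > x$ for small $x>0$, while $f_S(1) \leq 1$; the intermediate value theorem then gives a fixed point in $(0,1]$, so $\FPP(W_S)>0$.

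For (4), I use $f_S(1) = \sum_{k\geq 1} D_S(k)/\#S = 1 - D_S(0)/\#S$. If every element of $S$ fixes a point, then $D_S(0)=0$, so $1$ is a fixed point and (1) gives $\FPP(W_S)=1$. Conversely, if $\FPP(W_S)=1$, then $1$ is a fixed point of $f_S$, which forces $D_S(0)=0$.

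I expect the main obstacle to be the degenerate low-degree cases in (3), where strict concavity is unavailable; these are exactly what the linear-case split handles.
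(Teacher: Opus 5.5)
Your proof is correct. It follows the same overall plan as the paper: the recursion $p_{n+1}=f_S(p_n)$ together with elementary properties of $f_S$. In (1) and (3), however, it is organized differently and is tighter.

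For (1), the paper splits on $\deg f_S$. It then uses strict concavity to show there are at most two fixed points, and Rolle's theorem plus monotonicity of $f_S'$ to get $0<f_S'(x_0)<1<f_S'(0)$. From ``$0$ is repelling, $x_0$ attracting'' it concludes $\FPP(W_S)=x_0$. That last step tacitly needs a statement about the particular orbit $(p_n)$. Your induction $p_n\ge x^*$ from $p_0=1$ supplies exactly this, using only that $f_S$ is a nondecreasing self-map of $[0,1]$, with no degree split and no concavity. In exchange, the paper's route gives extra information: there are at most two fixed points, and $f_S'(x_0)<1$, so $p_n$ converges geometrically.

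In (3), the paper says the forward implication was ``already proved in (1)''. But (1) really shows that a positive fixed point forces $f_S'(0)>1$, which is the reverse implication. Your intermediate value argument for $f_S'(0)>1\Rightarrow\FPP(W_S)>0$ fills in the missing direction.

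Parts (2) and (4) agree with the paper, with one small fix in (2): when $f_S$ is the identity your $g$ is the zero polynomial, so in that case use $x-1$ instead.

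Your caution about ``algebraic integer'' in the introduction is justified. For $S=\{(1\,2),(3\,4),(1\,2)(3\,4)\}\subseteq\Sym(4)$ one has $f_S(x)=\tfrac{2}{3}(2x-x^2)$ and $\FPP(W_S)=\tfrac{1}{2}$, which is not an algebraic integer. Integrality does hold whenever the identity lies in $S$ (for instance, for subgroups). Then $D_S(d)=1$, so $\#S\cdot g$ has integer coefficients and leading coefficient $\pm1$.
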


\begin{proof}
For (1), as it was observed in \cref{Remark: FPP is fixed point fS}, the fixed-point proportion of $W_S$ is a fixed point of $f_S$ in $[0,1]$. 

If $\deg(f_S) < 1$, then $f_S(x) = 0$ and the only fixed point is zero. 

If $\deg(f_S) = 1$, then $f_S'(0) = \frac{D_S(1)}{\# S} \leq 1$. If $f_S'(0) < 1$, then the only fixed point is zero. If $f_S'(0) = 1$, this implies that $D_S(1) = \# S$ or equivalently that every element in $S$ fixes exactly one point. Therefore $\FPP(W_S) = 1$, coinciding with the largest fixed point of $f_S$.

If $\deg(f_S) > 1$, then by \cref{proposition: properties fS}, the function $f_S$ has at most two fixed points, with $0$ as one of them. If $0$ is the only fixed point we are done. Otherwise, let $x_0 > 0$ be the other fixed point. Consider $g(x) = f_S(x)-x$. Then $g(0) = g(x_0) = 0$ and by Rolle's theorem there exists $c \in (0,x_0)$ such that $g'(c) = 0$ or equivalently that $f_S'(c) = 1$. Since $f_S''$ is strictly negative, then $f_S'$ is strictly decreasing but since $f_S'$ is always positive then $0 < f_S'(x_0) < f_S'(c) = 1 < f_S'(0)$. This proves that $0$ is a repelling fixed point whereas $x_0$ is attracting, so $\FPP(W_S) = x_0$. \\

For (2), since $\deg(f_S) \leq d$ and $f_S(0) = 0$, if $\deg(f_S) > 1$, $\FPP(W_S)$ satisfies the polynomial: $$\frac{f_S(x)}{x} - 1 = 0,$$ which has degree at most $d-1$ and coefficients in $\ZZ\left[\frac{1}{\# S}\right]$, as $D_S(k) \in \ZZ$. \\

For (3), the direct was already proved in (1). For the converse, if $f_S'(0) \leq 1$ and $\deg(f_S) > 1$, since $f_S'$ is strictly decreasing, then $f_S(x) < x$ for all $x > 0$ and the only fixed point is $0$. \\

For (4), $\FPP(W_S) = 1$ if and only if $$f_S(1) = \sum_{k = 1}^d \frac{D_S(k)}{\# S} = 1.$$ Since $\#S = \sum_{k = 0}^d D_S(k) = 1$, this implies that $\FPP(W_S) = 1$ if and only if $D_S(0) = 0$, meaning every element in $S$ fixes at least one element in $\set{1,\dots,d}$. 
\end{proof}

A particular case that we will be of interest in this article is when $S$ is a coset in $\Sym(d)$. We first need the following definitions:

\begin{Definition}
Let $G$ be a group acting on a set $Y$ via the action $\rho: G \curvearrowright Y$. Let $S$ be a set of $G$, an element $s \in S$ and $y \in Y$. We denote by $Y^s := \set{y \in Y: sy = y}$ and by $\Stab_S(y) := \set{s \in S: sy = y}$. 
\end{Definition}

\begin{Remark}
If we consider $G = \Sym(d)$ with the natural action on $X = \set{1,\dots,d}$ and a set $S \subseteq \Sym(d)$, we have 
\begin{align*}
\frac{1}{\# S} \sum_{s \in S} \# X^s = \frac{1}{\# S} \sum_{k = 0}^d \,\, \sum_{s \in S: \# X^s = k} k = \frac{1}{\# S} 
\sum_{k = 0}^d D_S(k) k = \frac{1}{\# S} 
\sum_{k = 1}^d D_S(k) k = f_S'(0).
\end{align*}
\label{Remark: Burnside and derivative fS}
\end{Remark}

If $S$ is a coset, the quantity on the left-hand side can be studied as a generalization of Burnside's lemma:

\begin{Lemma}[{Burnside Lemma for cosets}]
Let $G \curvearrowright Y$ be an action of a finite group $G$ over a finite set $Y$. Let $H \leq G$ and $A = gH$ be a coset of $H$ in $G$. 

\begin{enumerate}
    \item If $H \curvearrowright Y$ acts transitively, then $$\frac{1}{\#A} \sum_{a \in A} \# Y^a = 1.$$
    \item Define $Y^* := \set{y \in Y: \exists h \in H \text{ s.t. } hy = g^{-1}y}$. If $g \in N_G(H)$, the normalizer of $H$ in $G$, then $$\frac{1}{\#A} \sum_{a \in A} \# Y^a = \#(Y^*/H).$$
\end{enumerate}
\label{lemma: Burnside cosets}
\end{Lemma}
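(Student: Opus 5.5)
The approach is a double count of the pairs $(a,y)\in A\times Y$ with $ay=y$, in the spirit of the classical Burnside lemma, followed by a reduction of both statements to orbit counting for $H$. Write every $a\in A$ uniquely as $a=gh$ with $h\in H$, so that $\#A=\abs{H}$. Then
$$\sum_{a\in A}\#Y^a=\sum_{y\in Y}\#\Stab_A(y),\qquad \Stab_A(y)=\set{gh: h\in H,\ hy=g^{-1}y}.$$
Hence $\Stab_A(y)$ is nonempty exactly when $g^{-1}y\in Hy$, that is, when $y\in Y^*$. When it is nonempty, fix one element $h_0$ with $h_0y=g^{-1}y$. The set of admissible $h$ is then $\set{h: h_0^{-1}hy=y}=h_0\Stab_H(y)$, a left coset of $\Stab_H(y)$. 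So $\#\Stab_A(y)=\abs{\Stab_H(y)}$ for $y\in Y^*$ and $\#\Stab_A(y)=0$ otherwise. This computation does not use normality.

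Therefore
$$\frac{1}{\#A}\sum_{a\in A}\#Y^a=\frac{1}{\abs{H}}\sum_{y\in Y^*}\abs{\Stab_H(y)}=\sum_{y\in Y^*}\frac{1}{\#(Hy)},$$
where the last equality is the orbit–stabilizer theorem. If $Y^*$ is a union of $H$-orbits, each orbit $\mathcal O\subseteq Y^*$ contributes $\sum_{y\in\mathcal O}1/\#\mathcal O=1$, and the sum equals $\#(Y^*/H)$.

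Part (1) follows directly. If $H$ acts transitively, then $g^{-1}y\in Y=Hy$ for every $y$, so $Y^*=Y$ is a single $H$-orbit, and the average is $1$. This case needs no hypothesis on $g$.

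For part (2), the step that needs care is showing that $Y^*$ is $H$-invariant, and this is where $g\in N_G(H)$ enters. Take $y\in Y^*$ with $hy=g^{-1}y$, and let $k\in H$. Then
$$g^{-1}(ky)=(g^{-1}kg)\,g^{-1}y=(g^{-1}kg)\,h\,y=\bigl((g^{-1}kg)hk^{-1}\bigr)(ky).$$
The element $(g^{-1}kg)hk^{-1}$ lies in $H$ because $g$ normalizes $H$, so $ky\in Y^*$. Thus $Y^*$ is a union of $H$-orbits, and the orbit-counting step above gives the stated value $\#(Y^*/H)$.
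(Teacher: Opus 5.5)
Your proposal is correct and follows the paper's proof essentially step for step. Both arguments use the same double count of $\set{(a,y)\in A\times Y: ay=y}$, the reduction to $Y^*$ via the coset/bijection between $\Stab_A(y)$ and $\Stab_H(y)$, orbit--stabilizer, and the same normalizer computation showing that $Y^*$ is $H$-invariant; writing $g^{-1}kg$ explicitly where the paper uses $h'$ is only a notational difference.
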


\begin{proof}
Consider the set $Z = \set{(a,y) \in A \times Y: ay = y}$. We can count the elements of $Z$ in two different ways. On one hand, we have: $$\#Z = \sum_{a \in A} \# Y^a.$$

On the other hand, we have: $$\# Z = \sum_{y \in Y} \# \Stab_A(y).$$

Notice that $a \in \Stab_A(y)$ if and only if $ay = y$. Since $A = gH$, there exists $h \in H$ such that $a = gh$, so $ay = y$ if and only if $hy = g^{-1}y$. If no such $h$ exists, then $\Stab_A(y) = \emptyset$, so we can restrict the sum to $Y^*$. If $y \in Y^*$, and $h_0 \in H$ is an element such that $h_0 y= g^{-1}y$, then we have a bijection between $\Stab_H(y)$ and $\Stab_A(y)$ given by $h \mapsto gh_0h$. Therefore $$\#Z = \sum_{y \in Y^*} \# \Stab_H(y).$$ By the orbit-stabilizer theorem, we know that $$\# \Stab_H(y) = \frac{\abs{H}}{\# \orb_H(y)},$$ so $$\#Z = \abs{H} \sum_{y \in Y^*} \frac{1}{\# \orb_H(y)}.$$

If $H$ is transitive, then $Y^* = Y$, and thus $$\# Z = \abs{H} \sum_{B \in Y/H} \sum_{y \in B} \frac{1}{\# B} = \abs{H} \# (Y/H) = \abs{H} = \#A.$$ This gives the formula in the first case. 

If $g \in N_G(H)$, the action $G \curvearrowright Y$ restricts to $H \curvearrowright Y^*$. Indeed, if $y \in Y^*$, there exists $h_0 \in H$ such that $h_0y = g^{-1}y$. Then, for $h \in H$, there exists $h' \in H$ such that $g^{-1}h = h' g^{-1}$. Therefore $$g^{-1}(hy) = h' g^{-1}y = h'h_0 y = (h'h_0 h^{-1}) h(y).$$ Since $h'h_0h^{-1} \in H$, we obtain that $hy \in Y^*$. 

Thus, we can split the sum over the orbits $Y^*/H$, obtaining: $$\# Z = \abs{H} \sum_{B \in Y^*/H} \sum_{y \in B} \frac{1}{\# B} = \abs{H} \# (Y^*/H) = \#A \,\, \# (Y^*/H).$$ This gives the formula in the second case.
\end{proof}

In the case that $S$ is a subgroup in \cref{theorem: FPP(WS)}, we deduce the following corollary for the fixed-point proportion of iterated wreath products:

\begin{corollary}
Let $d \in \NN$ with $d \geq 2$, the set $X = \set{1,\dots,d}$ and $\mathcal{P} \leq \Sym(d)$. Then the fixed point proportion of $W_\mathcal{P}$ is: 
\begin{align*}
\text{FPP}(W_\mathcal{P}) = \left \{ \begin{matrix} 
0 & \mbox{if $\mathcal{P}$ is transitive over $X$,} \\ 
\alpha \in (0,1) & \mbox{if $\mathcal{P}$ not transitive and $\exists \, \sigma$ with no fixed points} \\
1 & \mbox{if $\mathcal{P}$ not transitive and every element fixes a point.}
\end{matrix}\right.
\end{align*}
Moreover, the value $\FPP(W_\mathcal{P})$ is the solution of a polynomial of degree $d-1$ with coefficients in $\ZZ\left[\frac{1}{\abs{\mathcal{P}}}\right]$.
\label{corollary: FPP iterated wreath products}
\end{corollary}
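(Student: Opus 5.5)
The plan is to specialize \cref{theorem: FPP(WS)} to $S = \mathcal{P}$, a subgroup, and to compute $f_\mathcal{P}'(0)$ using \cref{Remark: Burnside and derivative fS} together with the classical Burnside lemma, i.e.\ \cref{lemma: Burnside cosets} with $g = 1$, where $Y^* = Y = X$. This gives
$$f_\mathcal{P}'(0) = \frac{1}{\abs{\mathcal{P}}}\sum_{\sigma \in \mathcal{P}} \# X^\sigma = \#(X/\mathcal{P}),$$
the number of orbits of $\mathcal{P}$ on $X$. The three cases then follow from items (1), (3) and (4) of \cref{theorem: FPP(WS)}.

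\textbf{Transitive case.} If $\mathcal{P}$ is transitive, then $f_\mathcal{P}'(0) = 1$. To get $\FPP(W_\mathcal{P}) = 0$ from item (3), I still need $f_\mathcal{P}$ not to be the identity. The identity would force $D_\mathcal{P}(1) = \abs{\mathcal{P}}$ and $D_\mathcal{P}(k) = 0$ for $k \neq 1$. But the identity permutation lies in $\mathcal{P}$ and fixes all $d \geq 2$ points, so $D_\mathcal{P}(d) \geq 1$. Hence $f_\mathcal{P}$ is not the identity, and $\FPP(W_\mathcal{P}) = 0$.

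\textbf{Non-transitive case.} If $\mathcal{P}$ is not transitive, then $f_\mathcal{P}'(0) \geq 2 > 1$, so by item (3) we get $\FPP(W_\mathcal{P}) > 0$. By item (4), $\FPP(W_\mathcal{P}) = 1$ exactly when every element of $\mathcal{P}$ fixes a point; otherwise it lies in $(0,1)$. This yields the second and third cases.

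\textbf{Algebraicity.} The final claim follows from item (2). Here $\deg f_\mathcal{P} = d$, because the identity contributes $D_\mathcal{P}(d) \geq 1$ and the leading coefficient is $\pm D_\mathcal{P}(d)/\abs{\mathcal{P}} \neq 0$. So $f_\mathcal{P}(x)/x - 1$ has degree exactly $d-1$ with coefficients in $\ZZ[1/\abs{\mathcal{P}}]$. In the degenerate cases where the value is $0$ or $1$, one checks it is a root: $1$ is a root when $f_\mathcal{P}(1) = 1$, and for $0$ one may simply use the polynomial $f_\mathcal{P}(x) - x$ divided appropriately, or note the statement is intended for the interior case.

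The only mildly delicate points are confirming that $f_\mathcal{P}$ is never the identity (which uses $d \geq 2$) and the degree count, both handled by the presence of the identity permutation in $\mathcal{P}$. I expect no real obstacle beyond this.
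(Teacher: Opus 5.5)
Your proposal is correct and takes essentially the same route as the paper: specialize \cref{theorem: FPP(WS)} to $S=\mathcal{P}$, use the identity element to get $\deg f_\mathcal{P} = d \geq 2$ (so $f_\mathcal{P}$ is not the identity), and identify $f_\mathcal{P}'(0)$ with the number of orbits via Burnside's lemma.

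Your hedge in the degenerate cases is unnecessary; the paper itself only treats the interior case. The polynomial $f_\mathcal{P}(x)/x - 1$ takes the value $f_\mathcal{P}'(0)-1$ at $x=0$, which is zero in the transitive case. At $x=1$ it takes the value $-D_\mathcal{P}(0)/\abs{\mathcal{P}}$, which is zero when every element fixes a point. So in all three cases the fixed-point proportion is a root of this degree-$(d-1)$ polynomial.
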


\begin{proof}
Since $\mathcal{P}$ is a subgroup, the identity is in $\mathcal{P}$ so $D_\mathcal{P}(d) = 1$ and therefore $\deg(f_\mathcal{P}) = d \geq 2$. The case when $\FPP(W_\mathcal{P}) = 1$, follows directly from \cref{theorem: FPP(WS)}. The case when $\FPP(\mathcal{P}) = \alpha \in (0,1)$ follows from the fact that $\alpha$ is the largest fixed point of $f_\mathcal{P}$ and $f_\mathcal{P}(0) = 0$, so it is a solution of $\frac{f_\mathcal{P}(x)}{x} - 1 = 0$ that is a polynomial with degree $d-1$. 

Finally, by \cref{theorem: FPP(WS)}, we have $\FPP(W_\mathcal{P}) = 0$ if and only if $f_\mathcal{P}'(0) \leq 1$ and $f_\mathcal{P}$ is not the identity function. Since $\deg(f_\mathcal{P}) > 1$, then $\FPP(W_\mathcal{P}) = 0$ if and only if $f_\mathcal{P}'(0) \leq 1$. By \cref{Remark: Burnside and derivative fS} and \cref{lemma: Burnside cosets}, we have $$f_\mathcal{P}'(0) = \frac{1}{\abs{\mathcal{P}}} \sum_{k = 1}^d D_\mathcal{P}(k) k = \#(X/\mathcal{P}),$$ so $\FPP(W_\mathcal{P}) = 0$ if and only if $\mathcal{P}$ is transitive over $X$. 
\end{proof}

Running a program in SAGE \cite{sagemath}, we compute the characteristic polynomials of the subgroups of $\Sym(d)$ for $d = 3$ and $4$. By \cref{lemma: characteristic polynomial conjugacy class}, it suffices to consider the conjugacy classes of subgroups in $\Sym(d)$, since they have the same characteristic polynomial.

For $d = 3$, the group $\Sym(3)$ has four conjugacy classes. Representatives of these classes are: $\set{1}$, $\<(2,3)>$, $\<(1,2,3)>$ and $\Sym(3)$. The corresponding fixed-point proportions are $1$, $1$, $0$ and $0$, respectively. This matches with \cref{corollary: FPP iterated wreath products}, as the first two classes fix at least one point and the last two are transitive. \cref{figure: FPPS3} illustrates the functions $f_\mathcal{P}$ for each conjugacy class of $\mathcal{P}$ in $\Sym(3)$. Notice that the intersections of $f_\mathcal{P}$ with the identity function are only at $0$ and $1$, as expected. In particular, there are no examples of iterated wreath products with non-trivial fixed-point proportion for $d = 3$.  

\begin{figure}[h!]
\begin{tikzpicture}
\begin{axis}
[axis lines = left]
\addplot [ domain=0:1, samples=100, color=Red,] {(x - 1)^3 + 1};
\addplot [domain=0:1, samples=100, color=Green,] { 1/2*(x - 1)^3 + 1/2*x + 1/2};
\addplot [domain=0:1, samples=100, color=Periwinkle,] {1/6*(x - 1)^3 + 1/2*x + 1/6};
\addplot [domain=0:1, samples=100, color=Rhodamine,] {1/3*(x - 1)^3 + 1/3};
\addplot [domain=0:1, samples=100, color=black,] {x};
\addplot[ color=black, mark=*, ] coordinates {(0,0)};
\addplot[ color=black, mark=*, ] coordinates {(1,1)};
\end{axis}
\end{tikzpicture}
\caption{Plot of the functions $f_\mathcal{P}$ for $\mathcal{P}$ subgroups of $\Sym(3)$.}
\label{figure: FPPS3}
\end{figure}
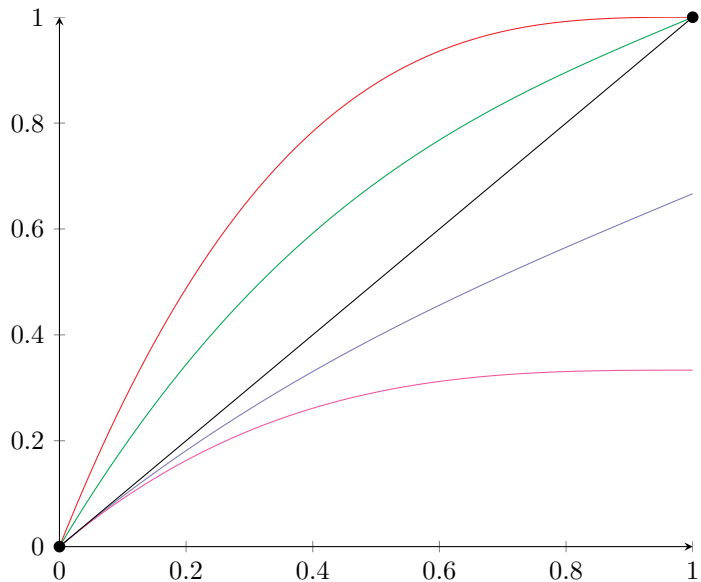

For $d = 4$, the group $\Sym(4)$ has eleven conjugacy classes. In four of them (the ones corresponding to $\Sym(3)$) their elements always fix a point, so their fixed-point proportion is $1$. There are five transitive conjugacy classes, namely, their fixed-point proportion is $0$. In addition, there are two conjugacy classes whose fixed-point proportion is not trivial. Possible representatives for these are $\<(1,2),(3,4)>$ and $\<(3,4),(1,2)(3,4)>$. Notice that these classes are not transitive because their elements cannot send $1$ to $3$, nor do they fix a particular point. The corresponding fixed-point proportions are approximately $0.45631\dots$ and $0.70440\dots$ respectively. \cref{figure: FPPS4} shows the functions $f_\mathcal{P}$ for each conjugacy class of $\mathcal{P}$ in $\Sym(4)$.

\begin{figure}[h!]
\begin{tikzpicture}
\begin{axis}
[axis lines = left]
\addplot [ domain=0:1, samples=100, color=Red,] {-(x-1)^4+1};
\addplot [domain=0:1, samples=100, color= Orange,] {1/2*(-(x-1)^4-(x-1)^2+2)};
\addplot [domain=0:1, samples=100, color=Dandelion,] {1/6*(-(x-1)^4-3*(x-1)^2+2*x+4)};
\addplot [domain=0:1, samples=100, color=Green,] {1/3*(-(x-1)^4+2*x+1)};
\addplot [domain=0:1, samples=100, color=PineGreen,] {1/4*(-(x-1)^4-2*(x-1)^2+3)};
\addplot [domain=0:1, samples=100, color=Aquamarine,] {1/2*(-(x-1)^4+1)};
\addplot [domain=0:1, samples=100, color=Periwinkle,] {-1/12*(x - 1)^4 + 2/3*x + 1/12};
\addplot [domain=0:1, samples=100, color=Blue,] {-1/24*(x - 1)^4 - 1/4*(x - 1)^2 + 1/3*x + 7/24};
\addplot [domain=0:1, samples=100, color=Purple,] {-1/8*(x - 1)^4 - 1/4*(x - 1)^2 + 3/8};
\addplot [domain=0:1, samples=100, color=Rhodamine,] {1/4*(-(x-1)^4+1)};
\addplot [domain=0:1, samples=100, color=black,] {x};
\addplot[ color=black, mark=*, ] coordinates {(0,0)};
\addplot[ color=black, mark=*, ] coordinates {(1,1)};
\addplot[ color=black, mark=*, ] coordinates {(0.4563109873079255,0.4563109873079255)};
\addplot[ color=black, mark=*, ] coordinates {(0.7044022574778126,0.7044022574778126)};
\end{axis}
\end{tikzpicture}
\caption{Plot of the functions $f_\mathcal{P}$ for $\mathcal{P}$ subgroups of $\Sym(4)$.}
\label{figure: FPPS4}
\end{figure}
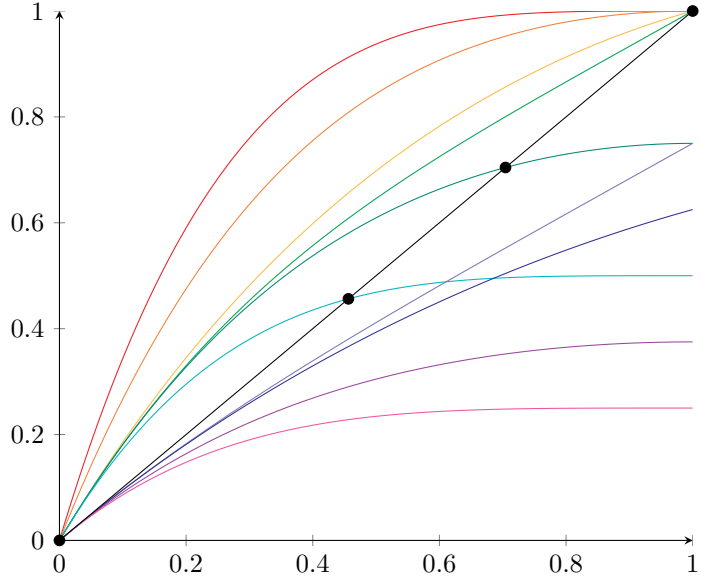

\section{The main construction}
\label{section: the main construction}

In this section, we will use \cref{theorem: FPP(WS)} to construct a family of regular branch groups whose fixed-point proportion and Hausdorff dimension can be computed explictly.

Given $d \geq 2$ and $T$ a $d$-regular tree, consider two subgroups $\mathcal{Q}, \mathcal{P} \leq \Sym(d)$ such that $1 \neq \mathcal{Q} \lhd \mathcal{P}$. Define $$G_\mathcal{Q}^\mathcal{P} = \set{g \in \Aut(T): g|_v^1 \in \mathcal{P}, \, (g|_v^1)(g|_w^1)^{-1} \in \mathcal{Q} \text{ for all $v,w \in T$}}.$$ 

In other words, the elements in this group lie in the iterated wreath product $W_\mathcal{P}$ (see \cref{section: preliminaries} for the definition of $W_\mathcal{P}$) and each element has its labels in the same coset of $\mathcal{P}/\mathcal{Q}$. For example, if $\mathcal{Q} = \mathcal{P}$, then $G_\mathcal{Q}^\mathcal{P} = W_\mathcal{P}$.

\begin{Lemma}
Let $T$ be a $d$-regular tree with $d \geq 2$ and $1 \neq \mathcal{Q} \lhd \mathcal{P} \leq \Sym(d)$. Then, the group $G_\mathcal{Q}^\mathcal{P}$ satisfies the following properties:
\begin{enumerate}
\item $G_\mathcal{Q}^\mathcal{P}$ contains $W_\mathcal{Q}$,
\item $W_\mathcal{Q} \lhd G_\mathcal{Q}^\mathcal{P}$,
\item $[G_\mathcal{Q}^\mathcal{P} : W_\mathcal{Q}] = [\mathcal{P}: \mathcal{Q}]$.
\end{enumerate}
\label{lemma: GQP group and index}
\end{Lemma}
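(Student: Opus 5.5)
The plan is to study the map recording which coset of $\mathcal{P}/\mathcal{Q}$ an element's labels lie in. Define $\phi: G_\mathcal{Q}^\mathcal{P} \rightarrow \mathcal{P}/\mathcal{Q}$ by $\phi(g) = g|_\emptyset^1 \mathcal{Q}$. By definition all labels of $g$ lie in this single coset, so $\phi(g) = g|_v^1\mathcal{Q}$ for every $v \in T$. I would prove three things in order: $G_\mathcal{Q}^\mathcal{P}$ is a subgroup, $\phi$ is a homomorphism, and $\ker \phi = W_\mathcal{Q}$ with $\phi$ surjective. Items (1)--(3) then follow at once. For (1) and (2), a kernel is normal. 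For (3), the first isomorphism theorem gives $G_\mathcal{Q}^\mathcal{P}/W_\mathcal{Q} \cong \mathcal{P}/\mathcal{Q}$.

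\textbf{Closure and the homomorphism property.} Take $g,h \in G_\mathcal{Q}^\mathcal{P}$. By \cref{equation: properties sections}, $(gh)|_v^1 = g|_{h(v)}^1 h|_v^1$. This lies in $\mathcal{P}$, and its coset is $(g|_{h(v)}^1\mathcal{Q})(h|_v^1\mathcal{Q}) = \phi(g)\phi(h)$. Multiplication of cosets is well defined because $\mathcal{Q} \lhd \mathcal{P}$. Since this coset does not depend on $v$, all labels of $gh$ lie in one coset, so $gh \in G_\mathcal{Q}^\mathcal{P}$ and $\phi(gh)=\phi(g)\phi(h)$.

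For inverses, \cref{equation: properties sections} gives $(g^{-1})|_v^1 = (g|_{g^{-1}(v)}^1)^{-1}$, whose coset is $\phi(g)^{-1}$, again independent of $v$. The identity lies in $G_\mathcal{Q}^\mathcal{P}$ since all its labels are trivial. So $G_\mathcal{Q}^\mathcal{P}$ is a group and $\phi$ is a homomorphism.

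\textbf{Kernel and surjectivity.} By definition, $g \in \ker\phi$ exactly when every label of $g$ lies in $\mathcal{Q}$, that is, $g \in W_\mathcal{Q}$. Conversely, every element of $W_\mathcal{Q}$ has all labels in $\mathcal{Q} \subseteq \mathcal{P}$, and their pairwise quotients lie in $\mathcal{Q}$, so $W_\mathcal{Q} \subseteq G_\mathcal{Q}^\mathcal{P}$. This gives (1), and normality (2) follows.

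For surjectivity, fix $p \in \mathcal{P}$ and let $g$ be the automorphism whose portrait has label $p$ at every vertex. Such an element exists because any portrait defines an automorphism. Then $g \in G_\mathcal{Q}^\mathcal{P}$ and $\phi(g) = p\mathcal{Q}$. Hence (3), $[G_\mathcal{Q}^\mathcal{P}:W_\mathcal{Q}] = [\mathcal{P}:\mathcal{Q}]$.

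\textbf{Main obstacle.} The only delicate point is the homomorphism step. The label of a product mixes labels of $g$ at the moved vertex $h(v)$ with labels of $h$ at $v$. The argument needs both the constancy of the coset over all vertices and the normality of $\mathcal{Q}$, so that coset multiplication makes sense.
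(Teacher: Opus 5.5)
Your proof is correct and is essentially the paper's argument: you use the same map $g \mapsto g|_\emptyset^1\mathcal{Q}$ and the same constant-portrait element for surjectivity. The only difference is organizational: you get closure and normality of $W_\mathcal{Q}$ at once from $\phi$ being a homomorphism with kernel $W_\mathcal{Q}$, whereas the paper checks $gh^{-1} \in G_\mathcal{Q}^\mathcal{P}$ and $ghg^{-1} \in W_\mathcal{Q}$ by explicit label computations and then shows the induced map on $G_\mathcal{Q}^\mathcal{P}/W_\mathcal{Q}$ is bijective (a slightly cleaner packaging on your part, though note that reading the defining condition $(g|_v^1)(g|_w^1)^{-1} \in \mathcal{Q}$ as ``same left coset'' already uses $\mathcal{Q} \lhd \mathcal{P}$).
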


\begin{proof}
We start proving that $G_\mathcal{Q}^\mathcal{P}$ is a group. Clearly, the identity element is in $G_\mathcal{Q}^\mathcal{P}$. Next, suppose $g,h \in G_\mathcal{Q}^\mathcal{P}$. We must verify that $gh^{-1} \in G_\mathcal{Q}^\mathcal{P}$. For $v,w \in T$, by the properties of sections (see \cref{equation: properties sections}), we have:
\begin{align*}
(gh^{-1})|_v^1 \, \, ((gh^{-1})|_w^1))^{-1} = (g|_{h^{-1}(v)}^1) \, (h|_{h^{-1}(v)}^1)^{-1} \, (h|_{h^{-1}(w)}^1) \,  (g|_{h^{-1}(w)}^1)^{-1}.
\end{align*}

Rewriting, this becomes:

\begin{align*}
\left[ (g|_{h^{-1}(v)}^1) \, (h|_{h^{-1}(v)}^1)^{-1} \, (h|_{h^{-1}(w)}^1) \, (g|_{h^{-1}(v)}^1)^{-1} \right]  \left[ ((g|_{h^{-1}(v)}^1)  (g|_{h^{-1}(w)}^1)^{-1}) \right].
\end{align*}

The second term between brackets is in $\mathcal{Q}$ since $g \in G_\mathcal{Q}^\mathcal{P}$ and the first term in brackets is in $\mathcal{Q}$ because $h \in G_\mathcal{Q}^\mathcal{P}$ and $\mathcal{Q} \lhd \mathcal{P}$. This ensures that $gh^{-1} \in G_\mathcal{Q}^\mathcal{P}$. Hence, $G_\mathcal{Q}^\mathcal{P}$ is a group.

We now prove that $W_\mathcal{Q} \lhd G_\mathcal{Q}^\mathcal{P}$. The proof is similar to the previous calculation. Specifically, we need to show that if $g \in G_\mathcal{Q}^\mathcal{P}$ and $h \in W_\mathcal{Q}$ then $(ghg^{-1})|_v^1 \in \mathcal{Q}$ for all $v \in T$. Indeed,
\begin{align*}
(ghg^{-1})|_v^1 = g|_{hg^{-1}(v)}^1 \, h|_{g^{-1}(v)}^1 \, (g|_{g^{-1}(v)}^1)^{-1}.
\end{align*}

Rewriting, this becomes:

\begin{align*}
 (ghg^{-1})|_v^1 = \left[ g|_{hg^{-1}(v)}^1 \, h|_{g^{-1}(v)}^1 \, (g|_{hg^{-1}(v)}^1)^{-1} \right] \, \left[ (g|_{hg^{-1}(v)}^1) \, (g|_{g^{-1}(v)}^1)^{-1} \right].  
\end{align*}

The first term in brackets belongs to $\mathcal{Q}$ because $\mathcal{Q} \lhd \mathcal{P}$, and the second term in bracket belongs to $\mathcal{Q}$ since $g \in G_\mathcal{Q}^\mathcal{P}$. Thus, $(ghg^{-1})|_v^1 \in \mathcal{Q}$ and consequently $W_\mathcal{Q} \lhd G_\mathcal{Q}^\mathcal{P}$.

Finally, consider the map 
\begin{align}
\begin{split}
G_\mathcal{Q}^\mathcal{P}/W_\mathcal{Q} & \rightarrow \mathcal{P}/\mathcal{Q} \\ 
[g] & \mapsto [g|_\emptyset^1].
\label{equation: cosets of GQP}
\end{split}
\end{align}
The map is well-defined because if $h \in W_\mathcal{Q}$ and $g \in G_\mathcal{Q}^\mathcal{P}$ then $$[(gh)|_\emptyset^1] = [g|_\emptyset^1] [h|_\emptyset^1] = [g|_\emptyset^1],$$ and it is in fact a morphism since the root of the tree is fixed by any element. 

To show surjectivity, let $\sigma \in \mathcal{P}$.  Define $g \in \Aut(T)$ such that $g|_v^1 = \sigma$ for all $v \in T$. Then, $g \in G_\mathcal{Q}^\mathcal{P}$ and $[g|_\emptyset^1] = [\sigma]$. 

To prove injectivity, suppose $[g|_\emptyset^1] = [h|_\emptyset^1]$. Then: $$(g|_\emptyset^1)(h|_\emptyset^1)^{-1} = (gh^{-1})|_\emptyset^1 \in \mathcal{Q}.$$ Since $gh^{-1} \in G_\mathcal{Q}^\mathcal{P}$, then $(gh^{-1})|_v^1 \in \mathcal{Q}$ for all $v \in T$ and consequently $gh^{-1} \in W_\mathcal{Q}$. 
\end{proof}

Notice that if $d = 2$, we can only construct one group, since $\Sym(2)$ only has two subgroups. In this case, $\mathcal{Q} = \mathcal{P} = \Sym(2)$ and $G_\mathcal{Q}^\mathcal{P} = \Aut(T)$, that by \cref{corollary: FPP iterated wreath products} has null fixed-point proportion.

\begin{Proposition}
Let $T$ be a $d$-regular tree with $d \geq 3$ and $1 \neq \mathcal{Q} \lhd \mathcal{P} \leq \Sym(d)$. Then $G_\mathcal{Q}^\mathcal{P}$ is a group of finite type of depth $2$ and Hausdorff dimension $$\mathcal{H}(G) = \frac{\log(\abs{\mathcal{Q}})}{\log(d!)}.$$ In particular, its Hausdorff dimension is always positive.
\label{proposition: GQP finite type Hausdorff dimension}
\end{Proposition}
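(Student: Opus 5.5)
The plan has two parts: exhibit $G_\mathcal{Q}^\mathcal{P}$ as a group of finite type with an explicit depth-$2$ pattern set, and then read the Hausdorff dimension off the finite-index subgroup $W_\mathcal{Q}$.

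\textbf{Depth-$2$ patterns.} Define $\mathcal{P}_2 \leq \Aut(T^2)$ to be the set of automorphisms of the $2$-truncated tree that satisfy two conditions. First, the root label and the $d$ labels at level $1$ all lie in $\mathcal{P}$. Second, these $d+1$ labels all lie in a single coset of $\mathcal{Q}$ in $\mathcal{P}$.

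\emph{$\mathcal{P}_2$ is a subgroup.} This is the same computation as in \cref{lemma: GQP group and index}, restricted to $T^2$. One uses the section formulas \cref{equation: properties sections} together with $\mathcal{Q} \lhd \mathcal{P}$.

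\emph{$G_\mathcal{Q}^\mathcal{P} = G_{\mathcal{P}_2}$.} The inclusion $G_\mathcal{Q}^\mathcal{P} \subseteq G_{\mathcal{P}_2}$ is immediate, since every $g|_v^2$ has all its labels in one coset. For the reverse inclusion, take $g$ with $g|_v^2 \in \mathcal{P}_2$ for every $v$. Then each vertex's label lies in the same $\mathcal{Q}$-coset as the labels of its children. Since $T$ is connected, induction along paths shows that all labels of $g$ lie in one coset, so $g \in G_\mathcal{Q}^\mathcal{P}$.

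\emph{The depth is exactly $2$.} A depth-$1$ description would require the allowed label set to be a subgroup $R$ with $G_\mathcal{Q}^\mathcal{P} = W_R$. In any such $W_R$, labels can be chosen independently at different vertices. In $G_\mathcal{Q}^\mathcal{P}$ they cannot, provided $\mathcal{Q} \neq \mathcal{P}$. So the depth is $2$ when $\mathcal{Q} \neq \mathcal{P}$. If $\mathcal{Q} = \mathcal{P}$, the group is $W_\mathcal{P}$ and depth $1$ already suffices. I expect the statement intends ``finite type of depth at most $2$'', or to treat this degenerate case by convention, and I will remark on it.

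This step is the main obstacle, though only a mild one: verifying carefully that $\mathcal{P}_2$ is closed under composition. Composing two elements moves labels between vertices via the action of the first element, so one must check that the coset condition survives this reindexing. The normality of $\mathcal{Q}$ in $\mathcal{P}$ is exactly what makes it work. Once the identification holds, \cref{proposition: finite type is closed self-similar and regular branch} shows that $G_\mathcal{Q}^\mathcal{P}$ is closed, self-similar and regular branch.

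\textbf{Hausdorff dimension.} By \cref{lemma: GQP group and index}, $W_\mathcal{Q}$ has finite index $[\mathcal{P}:\mathcal{Q}]$ in $G_\mathcal{Q}^\mathcal{P}$. Since $\mathcal{Q} \neq 1$, the group $W_\mathcal{Q}$ is infinite.

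The argument of \cref{lemma: Hausdorff dimension first properties}(2) applies: for all large $n$ we have $\abs{\pi_n(G)} = [\mathcal{P}:\mathcal{Q}]\,\abs{\pi_n(W_\mathcal{Q})}$. The constant factor disappears in the limit after dividing by $\log\abs{\pi_n(\Aut(T))} \to \infty$. Hence $\mathcal{H}(G_\mathcal{Q}^\mathcal{P}) = \mathcal{H}(W_\mathcal{Q})$.

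By \cref{lemma: Hausdorff dimension iterated wreath product}, $\mathcal{H}(W_\mathcal{Q}) = \log\abs{\mathcal{Q}}/\log(d!)$. This is positive because $\abs{\mathcal{Q}} \geq 2$.
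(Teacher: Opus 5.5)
Your proof is correct. The Hausdorff-dimension half matches the paper's: finite index of $W_\mathcal{Q}$, then \cref{lemma: Hausdorff dimension iterated wreath product}. The finite-type half takes a different route.

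\textbf{The paper's route.} It notes that $\St_{G_\mathcal{Q}^\mathcal{P}}(1)=\St_{W_\mathcal{Q}}(1)$, since a trivial root label forces every label into $\mathcal{Q}$. It concludes that $G_\mathcal{Q}^\mathcal{P}$ is regular branch over its first level stabilizer, because $W_\mathcal{Q}$ is. It then applies \v{S}uni\'c's characterization, \cref{theorem: characterization finite type groups}, with $D-1=1$. Depth $1$ is excluded by observing that the group is not an iterated wreath product.

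\textbf{Your route.} You exhibit the depth-$2$ pattern group $\mathcal{P}_2$ and prove $G_\mathcal{Q}^\mathcal{P}=G_{\mathcal{P}_2}$ directly, by propagating the coset condition down from the root.

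\textbf{Comparison.} Your argument is elementary and gives the patterns explicitly. It obtains closedness and self-similarity as consequences (via \cref{proposition: finite type is closed self-similar and regular branch}) rather than needing them as inputs. It also avoids the paper's detour through \emph{regular branch}. Under the paper's definition that notion includes level-transitivity, so strictly it needs $\mathcal{Q}$ transitive, and that hypothesis is not in the statement. The same caveat applies to your closing remark that $G_\mathcal{Q}^\mathcal{P}$ is regular branch, which you do not need. In exchange, the paper's route is shorter given the cited theorem, and it records the branching subgroup $\St_{W_\mathcal{Q}}(1)$ that is used later.

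Your observation that $\mathcal{Q}=\mathcal{P}$ gives $W_\mathcal{P}$, which has depth $1$, is right. It affects the paper's ``not an iterated wreath product'' step in the same way.

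To make your exclusion of depth $1$ airtight when $\mathcal{Q}\neq\mathcal{P}$, suppose $G_\mathcal{Q}^\mathcal{P}=W_R$. Constant-label elements force $R=\mathcal{P}$. Then the element with root label $\sigma\in\mathcal{P}\setminus\mathcal{Q}$ and trivial labels elsewhere lies in $W_\mathcal{P}\setminus G_\mathcal{Q}^\mathcal{P}$, a contradiction.
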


\begin{proof}
By \cref{section: Groups of finite type}, the group $W_\mathcal{Q}$ is regular branch, branching over $\St_{W_\mathcal{Q}}(1)$. Furthermore, $\St_{G_\mathcal{Q}^\mathcal{P}}(1) = \St_{W_\mathcal{Q}}(1)$, so $$(\St_{W_\mathcal{Q}}(1))_1 \leq_f \St_{W_\mathcal{Q}}(1) \leq_f G_\mathcal{Q}^\mathcal{P}.$$ Therefore, $G_\mathcal{Q}^\mathcal{P}$ is regular branch.

To determine the depth of $G_\mathcal{Q}^\mathcal{P}$, we apply \cref{theorem: characterization finite type groups}. First, $G_\mathcal{Q}^\mathcal{P}$ is not an iterated wreath product, so its depth is greater than $1$. Since $\St_{G_\mathcal{Q}^\mathcal{P}}(1) = \St_{W_\mathcal{Q}}(1)$, it follows by \cref{lemma: St(n+1) = prod St(n)} that the group $G_\mathcal{Q}^\mathcal{P}$ is branching over $\St_{G_\mathcal{Q}^\mathcal{P}}(1)$. Therefore $G_\mathcal{Q}^\mathcal{P}$ is of finite type with depth $D = 2$ by \cref{theorem: characterization finite type groups}, as $D-1 = 1$.

Finally, we compute the Hausdorff dimension of $G_\mathcal{Q}^\mathcal{P}$. By \cref{lemma: Hausdorff dimension first properties}, we have $\mathcal{H}(G_\mathcal{Q}^\mathcal{P}) = \mathcal{H}(W_{\mathcal{Q}})$. Then, by \cref{lemma: Hausdorff dimension iterated wreath product}, the result follows.
\end{proof}

The next question concerns the level-transitivity of $G_\mathcal{Q}^\mathcal{P}$. 

\begin{Proposition}
Let $T$ be a $d$-regular tree with $d \geq 3$ and $1 \neq \mathcal{Q} \lhd \mathcal{P} \leq \Sym(d)$. Then $G_\mathcal{Q}^\mathcal{P}$ is level-transitive if and only if $\mathcal{Q}$ is transitive on $X = \set{1, \dots,d}$. 
\label{proposition: GQP level transitive}
\end{Proposition}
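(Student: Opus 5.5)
We prove the two directions separately. Both rest on the observation that $W_\mathcal{Q} \le G_\mathcal{Q}^\mathcal{P} \le W_\mathcal{P}$, which follows from the definition and \cref{lemma: GQP group and index}(1).

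For the \emph{if} direction, assume $\mathcal{Q}$ is transitive on $X$. Since $W_\mathcal{Q} \le G_\mathcal{Q}^\mathcal{P}$, it suffices to show that $W_\mathcal{Q}$ is level-transitive. We argue by induction on $n$. The base case $n=1$ holds because the labels at the root range over all of $\mathcal{Q}$, which is transitive. For the inductive step, take two vertices $xu, yw \in \mathcal{L}_{n+1}$ with $x,y \in X$ and $u,w \in \mathcal{L}_n$ of $T_x$, $T_y$.
\begin{itemize}
\item By the induction hypothesis there is $h \in W_\mathcal{Q}$ with $h(u)=w$.
\item Choose $\sigma \in \mathcal{Q}$ with $\sigma(x)=y$.
\item Define $g \in \Aut(T)$ with root label $\sigma$, section $g|_x = h$, and all other sections at level $1$ equal to the identity.
\end{itemize}
All labels of $g$ lie in $\mathcal{Q}$, so $g \in W_\mathcal{Q}$, and $g(xu) = \sigma(x)h(u) = yw$. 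This completes the induction.

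For the \emph{only if} direction, assume $\mathcal{Q}$ is not transitive on $X$. Level-transitivity at level $1$ alone can still hold, since $\mathcal{P}$ may be transitive, so we must look at level $2$. Take $\mathcal{Q}$-orbits $O_1 \ne O_2$ in $X$ and fix $x \in X$ and $a \in O_1$, $b \in O_2$. We claim that no $g \in G_\mathcal{Q}^\mathcal{P}$ maps $xa$ to $xb$.

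Suppose $g(x)=x$ and write $\tau = g|_\emptyset^1$ and $\rho = g|_x^1$. Then $g(xa) = x\rho(a)$. Both $\tau$ and $\rho$ lie in the same coset $\tau\mathcal{Q}$, so $\rho = \tau q$ for some $q \in \mathcal{Q}$. Since $\mathcal{Q} \lhd \mathcal{P}$, $\tau$ permutes the $\mathcal{Q}$-orbits. Also $\tau(x)=x$, so $\tau$ fixes the orbit of $x$.

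The obstacle is that $\tau$ fixing the orbit of $x$ does not obviously force $\tau$ to fix the orbit $O_1$ of $a$. It does not, in general: $\tau$ could move $O_1$ to $O_2$. So the choice of $x$ matters, and we must find one that makes the argument work.

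The plan is to count. Consider the number of $G$-orbits on $\mathcal{L}_2$. For $G_\mathcal{Q}^\mathcal{P}$ to be transitive on $\mathcal{L}_2$, the stabilizer of each $x$ must act transitively on the $d$ children of $x$. By the computation above, for $g$ in this stabilizer the action on the children of $x$ is through the set $\tau\mathcal{Q}$, where $\tau$ ranges over root labels fixing $x$. The orbits of the group $\langle \tau, \mathcal{Q}\rangle$ are unions of $\mathcal{Q}$-orbits that get permuted by $\tau$.

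Fix an $x$ lying in a $\mathcal{Q}$-orbit $O$. Every admissible $\tau$ stabilizes $O$. Consider the subgroup $\mathcal{P}_O \le \mathcal{P}$ consisting of elements that stabilize $O$. Transitivity on the children of $x$ requires $\mathcal{P}_O$ to be transitive on $X$. But $\mathcal{P}_O$ preserves $O$, which is a proper nonempty subset of $X$ because $\mathcal{Q}$ is not transitive. Hence $\mathcal{P}_O$ is not transitive on $X$.

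So the vertices $xa$ with $a \in O$ and $xb$ with $b \notin O$ lie in different orbits, and $G_\mathcal{Q}^\mathcal{P}$ is not transitive on $\mathcal{L}_2$. The key fix is therefore to take $a$ in the same $\mathcal{Q}$-orbit as $x$ and $b$ outside it.
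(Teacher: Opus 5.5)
Your proof is correct and follows the paper's argument. The \emph{if} direction goes through the level-transitivity of $W_\mathcal{Q}\le G_\mathcal{Q}^\mathcal{P}$, which you verify by induction while the paper takes it as evident. The \emph{only if} direction, in both versions, looks at the stabilizer of a first-level vertex $x$ acting on its children and uses $\tau(x)=x$ together with $\mathcal{Q}\lhd\mathcal{P}$. The paper takes $a=x$ and notes that $\rho\tau^{-1}\in\mathcal{Q}$ sends $x$ to $y$, which is the same fact as your observation that $\rho=\tau q$ preserves the $\mathcal{Q}$-orbit of $x$. In a final write-up, drop the retracted first claim (arbitrary $x$ with $a\in O_1$, $b\in O_2$), since, as you note, it is false in general.
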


\begin{proof}
For the direct, let $x,y \in X$. Since $G_\mathcal{Q}^\mathcal{P}$ is level-transitive, there exists $g \in G_\mathcal{Q}^\mathcal{P}$ such that $g(xx) = xy$, where $xx$ and $xy$ represent vertices at level $2$ as words in $X$. Denote $a_0 = g|_\emptyset^1$ and $a_1 = g|_x^1$. By the definition of $G_\mathcal{Q}^\mathcal{P}$, there exists $q \in \mathcal{Q}$ such that $a_1 = a_0 q$. Since $\mathcal{Q} \lhd \mathcal{P}$, there exists $q' \in \mathcal{Q}$ such that $a_0 q = q' a_0$. Thus, $a_1 = q' a_0$, or equivalently, $a_1 a_0^{-1} \in \mathcal{Q}$. Furthermore, $$a_1 a_0^{-1}(x) = a_1(x) = y,$$ implying that $\mathcal{Q}$ is transitive on $X$.

For the converse, if $\mathcal{Q}$ is transitive on $X$, then $W_\mathcal{Q}$ is level-transitive and since $W_\mathcal{Q} \leq G_\mathcal{Q}^\mathcal{P}$, then $G_\mathcal{Q}^\mathcal{P}$ is level-transitive as well. 
\end{proof}

A closed group is said to be \textit{topologically finitely generated} if it contains a finitely generated dense subgroup.

\begin{Proposition}
Let $T$ be a $d$-regular tree with $d \geq 2$ and $1 \neq \mathcal{Q} \lhd \mathcal{P} \leq \Sym(d)$.

\begin{enumerate}
\item If $\mathcal{Q} \neq \mathcal{Q}'$ or there exists $x \in X = \set{1,\dots,d}$ such that $q(x) = x$ for all $q \in \mathcal{Q}$, then $G_\mathcal{Q}^\mathcal{P}$ is not topologically finitely generated. 
\item If $\mathcal{Q}$ is transitive, then $G_\mathcal{Q}^\mathcal{P}$ is topologically finitely generated if and only if $\mathcal{Q} = \mathcal{Q}'$.
\end{enumerate}
\label{proposition: GQP tfg}
\end{Proposition}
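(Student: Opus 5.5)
The plan is to handle (1) by exhibiting a continuous surjection from an open subgroup onto an infinite product of copies of a fixed nontrivial finite group. For (2), the plan is to bound the number of generators of the finite quotients $\pi_n(G_\mathcal{Q}^\mathcal{P})$ uniformly in $n$. Two standard profinite facts will be used throughout:
\begin{itemize}
\item an open subgroup of a topologically finitely generated profinite group is again topologically finitely generated (Schreier);
\item a closed group $G$ is topologically generated by $k$ elements if and only if every $\pi_n(G)$ is generated by $k$ elements.
\end{itemize}
By \cref{lemma: GQP group and index}, $W_\mathcal{Q}$ has finite index in $G_\mathcal{Q}^\mathcal{P}$, and it is closed, hence open. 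So $G_\mathcal{Q}^\mathcal{P}$ is topologically finitely generated if and only if $W_\mathcal{Q}$ is. It is also convenient to work inside the open subgroup $\St_{W_\mathcal{Q}}(1)=\St_{G_\mathcal{Q}^\mathcal{P}}(1)$.

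For (1), first suppose $\mathcal{Q}\neq\mathcal{Q}'$. Define $\Psi:\St_{W_\mathcal{Q}}(1)\to\prod_{n\ge1}\mathcal{Q}/\mathcal{Q}'$ by sending $g$ to the family, indexed by $n$, of the products $\prod_{v\in\mathcal{L}_n} g|_v^1$ taken modulo $\mathcal{Q}'$.
\begin{itemize}
\item $\Psi$ is a homomorphism. By \cref{equation: properties sections}, $(gh)|_v^1=g|_{h(v)}^1h|_v^1$. Since $h$ permutes $\mathcal{L}_n$ and the target is abelian, the level-$n$ product for $gh$ is the product of those for $g$ and for $h$.
\item $\Psi$ is continuous, since each coordinate depends only on $\pi_{n+1}$.
\item $\Psi$ is surjective, because labels at levels $n\ge1$ of elements of $\St_{W_\mathcal{Q}}(1)$ can be chosen freely in $\mathcal{Q}$.
\end{itemize}
An infinite product of copies of the nontrivial finite group $\mathcal{Q}/\mathcal{Q}'$ is not topologically finitely generated: its quotient $(\mathcal{Q}/\mathcal{Q}')^m$ needs a number of generators tending to infinity with $m$. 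Hence neither is $\St_{W_\mathcal{Q}}(1)$, and so neither is $G_\mathcal{Q}^\mathcal{P}$.

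Now suppose instead that some $x\in X$ is fixed by all of $\mathcal{Q}$.
\begin{itemize}
\item Every element of $W_\mathcal{Q}$ fixes each vertex $x^n$, because each label along that path fixes $x$.
\item The map $g\mapsto (g|_{x^n}^1)_{n\ge0}\in\prod_{n\ge 0}\mathcal{Q}$ is therefore a homomorphism on $W_\mathcal{Q}$, since $(gh)|_{x^n}^1=g|_{x^n}^1h|_{x^n}^1$.
\item It is continuous and surjective, for the same reasons as before.
\end{itemize}
The same argument then shows that $G_\mathcal{Q}^\mathcal{P}$ is not topologically finitely generated.

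For (2), if $\mathcal{Q}\neq\mathcal{Q}'$ we conclude by (1), so it remains to assume $\mathcal{Q}$ is transitive and perfect and show that $W_\mathcal{Q}$ is topologically finitely generated. By the second fact above, it suffices to bound $d(\pi_n(W_\mathcal{Q}))$ uniformly in $n$.

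Write $\Gamma_n=\pi_n(W_\mathcal{Q})$. Then $\Gamma_{n+1}=\mathcal{Q}\wr_{\mathcal{L}_n}\Gamma_n$, that is, $\Gamma_{n+1}=\mathcal{Q}^{\mathcal{L}_n}\rtimes\Gamma_n$, with $\Gamma_n$ transitive on $\mathcal{L}_n$ because $\mathcal{Q}$ is transitive. The plan is to show $d(\mathcal{Q}\wr_Y H)\le \max(d(H),c)$ for a constant $c$ depending only on $\mathcal{Q}$, whenever $H$ acts transitively on a finite set $Y$. To do this, lift generators $h_1,\dots,h_k$ of $H$ to elements $(f_i,h_i)$ with base parts $f_i\in\mathcal{Q}^Y$ chosen suitably, and show they generate $\Gamma_{n+1}$.
\begin{itemize}
\item A subgroup $M$ projecting onto $H$ with $M\cap\mathcal{Q}^Y\neq\mathcal{Q}^Y$ lies, after a Frattini-type reduction, in a maximal subgroup complementing $\mathcal{Q}^Y$ modulo an $H$-invariant maximal normal subgroup of the base.
\item Since $\mathcal{Q}$ is perfect, the chief factors of $\mathcal{Q}^Y$ are nonabelian. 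Transitivity then forces the relevant $H$-invariant maximal normal subgroups to be diagonal-type ones attached to maximal normal subgroups of $\mathcal{Q}$, a number independent of $|Y|$.
\item Counting via a Gaschütz or Lucchini-type estimate, a random choice of the $f_i$ avoids all such maximal subgroups once $k$ exceeds a constant depending only on $\mathcal{Q}$.
\end{itemize}

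I expect this last step to be the main obstacle: obtaining a bound independent of $n$ on the number of generators of these wreath products. If it proves delicate, I would instead cite the known characterization (Bondarenko) that an iterated wreath product $W_\mathcal{Q}$ with $\mathcal{Q}$ transitive is topologically finitely generated if and only if $\mathcal{Q}$ is perfect. Combined with the finite-index reduction, this gives (2).
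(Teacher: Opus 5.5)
Your argument for (1) is the paper's: the paper applies the abelianized level-product map to all of $W_\mathcal{Q}$, where your own computation already shows it is a homomorphism, so passing to $\St_{W_\mathcal{Q}}(1)$ is unnecessary. For (2) the paper simply cites Bondarenko's characterization (or Corollary~3 of Radi's work on groups of finite type), so your fallback is exactly its proof.

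Do not rely on your self-contained sketch for (2) as written. A perfect $\mathcal{Q}$ can have non-Frattini abelian chief factors: for example, the transitive perfect group $\mathrm{ASL}(2,4)$ of degree $16$ has its translation subgroup as a complemented minimal normal subgroup. Then $\mathcal{Q}^Y\rtimes H$ has maximal supplements of the base, such as $\mathrm{SL}(2,4)\wr_Y H$, that meet $\mathcal{Q}^Y$ in a non-normal subgroup and so are not of the form you describe. Your counting would therefore have to treat these abelian factors separately.
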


\begin{proof}
Since $W_\mathcal{Q}$ has finite index in $G_\mathcal{Q}^\mathcal{P}$, the group $G_\mathcal{Q}^\mathcal{P}$ i topologically finitely generated if and only if $W_\mathcal{Q}$ is.

For the first part, consider the map
\begin{align*}
\varphi_1: W_\mathcal{Q} \rightarrow \prod_{n \in \NN} \mathcal{Q}/\mathcal{Q}', \quad g \mapsto \prod_{n \in \NN} \left( \prod_{v \in \mathcal{L}_n} (g|_{v}^1) \mathcal{Q}'  \right), 
\end{align*}
where the outer product is the Cartesian product and the inner product is the product of the sections. The map is well-defined since we are taking the quotient by $\mathcal{Q}'$.

The map is clearly surjective. If $\mathcal{Q} \neq \mathcal{Q}'$, the image is not topologically finitely generated, and thus $W_\mathcal{Q}$ is not topologically finitely generated. 

Suppose there exists $x \in X$ such that $q(x) = x$ for all $q \in \mathcal{Q}$. Consider the map
\begin{align*}
\varphi_2: W_\mathcal{Q} \rightarrow \prod_{n \in \NN} \mathcal{Q}, \quad g \mapsto \prod_{n \in \NN} \left( g|_{x^n}^1 \right), 
\end{align*}
where $x^n$ represents the vertex at level $n$ reached by following the path labeled by $x$ at each level. 

The map is a morphism since $g(x^n) = x^n$ for all $g \in W_\mathcal{Q}$. The map is surjective, and since $\mathcal{Q} \neq 1$, the image is not topologically finitely generated. Hence, $W_\mathcal{Q}$ is not topologically finitely generated.

If $\mathcal{Q}$ is transitive, the result follows directly from \cite[Corollary 3]{RadiFiniteType2025} or \cite[Theorem 1]{Bondarenko2010} 
\end{proof}

\begin{Remark}
A profinite group is just-infinite if for every closed non-trivial normal subgroup $N \lhd G$, then $[G:N] < \infty$. A profinite group is strongly complete if its profinite completion coincides with the group. By a recent result proved in \cite[Theorem 1]{RadiFiniteType2025}, which applies to all the groups of the form $G_\mathcal{Q}^\mathcal{P}$, the following properties are equivalent:

\begin{itemize}
\item $G_\mathcal{Q}^\mathcal{P}$ is topologically finitely generated
\item $G_\mathcal{Q}^\mathcal{P}$ is just-infinite
\item $G_\mathcal{Q}^\mathcal{P}$ is strongly complete
\item $\mathcal{Q} = \mathcal{Q}'$
\end{itemize}

\end{Remark}

We conclude this section by showing that the fixed-point proportion of the group $G_\mathcal{Q}^\mathcal{P}$ can be calculated explicitly in terms of its cosets:

\begin{Proposition}
Let $T$ be a $d$-regular tree with $d \geq 2$ and $1 \neq \mathcal{Q} \lhd \mathcal{P} \leq \Sym(d)$. Then \begin{align}
\FPP(G_\mathcal{Q}^\mathcal{P}) = \frac{1}{[\mathcal{P}: \mathcal{Q}]} \sum_{A \in \mathcal{P}/\mathcal{Q}} \FPP(W_A). 
\end{align}
In particular, $\FPP(G_\mathcal{Q}^\mathcal{P}) > 0$ if and only $\FPP(W_A) > 0$ for one coset $A$.
\label{Proposition: FPP(GQP) in terms of FPP(A)}
\end{Proposition}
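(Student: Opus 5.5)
The plan is to decompose $G_\mathcal{Q}^\mathcal{P}$ into the disjoint union of the sets $W_A$, one for each coset $A \in \mathcal{P}/\mathcal{Q}$, and then to show that the finite-level counts add up accordingly. By definition, an element $g \in G_\mathcal{Q}^\mathcal{P}$ has all its labels in a single coset $A$ of $\mathcal{Q}$ in $\mathcal{P}$. Conversely, any $g \in \Aut(T)$ with all labels in $A$ satisfies $g|_v^1 (g|_w^1)^{-1} \in A A^{-1} = \mathcal{Q}$, using normality of $\mathcal{Q}$, so $g \in G_\mathcal{Q}^\mathcal{P}$. Hence
\[
G_\mathcal{Q}^\mathcal{P} = \bigsqcup_{A \in \mathcal{P}/\mathcal{Q}} W_A .
\]
The union is disjoint because the root label determines $A$; this is consistent with the isomorphism \cref{equation: cosets of GQP} in \cref{lemma: GQP group and index}.

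Next I pass to level $n \geq 1$. Since each $W_A$ is determined by root labels, the projections $\pi_n(W_A)$ are pairwise disjoint, and $\pi_n(G_\mathcal{Q}^\mathcal{P}) = \bigsqcup_A \pi_n(W_A)$. Moreover $\#\pi_n(W_A) = (\#A)^{(d^n-1)/(d-1)} = \abs{\mathcal{Q}}^{(d^n-1)/(d-1)}$, because each label on $T^{n-1}$ may be chosen freely in $A$. So all the pieces have the same size, namely $\abs{\pi_n(G_\mathcal{Q}^\mathcal{P})}/[\mathcal{P}:\mathcal{Q}]$. The property of fixing a vertex at level $n$ depends only on the element of $\pi_n$, so the fixed-point counts also add:
\[
f_n(G_\mathcal{Q}^\mathcal{P}) = \sum_{A} f_n(W_A).
\]
Dividing by $\abs{\pi_n(G_\mathcal{Q}^\mathcal{P})} = [\mathcal{P}:\mathcal{Q}]\,\#\pi_n(W_A)$ gives
\[
p_n(G_\mathcal{Q}^\mathcal{P}) = \frac{1}{[\mathcal{P}:\mathcal{Q}]} \sum_A p_n(W_A).
\]

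Finally, I take $n \to \infty$. The limit defining $\FPP(W_A)$ exists for every set $S$ by the existence lemma at the beginning of \cref{section: FPP of IWP}, and it equals the largest fixed point of $f_A$ by \cref{theorem: FPP(WS)}. The limit for $G_\mathcal{Q}^\mathcal{P}$ exists by \cref{lemma: FPP exists}. A finite sum of convergent sequences converges to the sum of the limits, which yields the formula. The "in particular" statement follows because all the terms are nonnegative.

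The only delicate point is making sure the equal-size claim for $\#\pi_n(W_A)$ is right. It is, because $W_A$ imposes no constraints across vertices: each label in $T^{n-1}$ is an independent choice in $A$, exactly as in \cref{lemma: Hausdorff dimension iterated wreath product}. Once this is checked, the rest is bookkeeping.
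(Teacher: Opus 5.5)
Your proposal is correct and follows essentially the same route as the paper: decompose $G_\mathcal{Q}^\mathcal{P}$ into the pieces $W_A$ indexed by $\mathcal{P}/\mathcal{Q}$, observe that the projections $\pi_n(W_A)$ are disjoint and all of equal size $\abs{\pi_n(G_\mathcal{Q}^\mathcal{P})}/[\mathcal{P}:\mathcal{Q}]$, add the fixed-point counts, and pass to the limit. The only minor difference is that you count $\#\pi_n(W_A)=\abs{\mathcal{Q}}^{(d^n-1)/(d-1)}$ directly where the paper argues via the index $[\pi_n(G_\mathcal{Q}^\mathcal{P}):\pi_n(W_\mathcal{Q})]$; you also cite explicitly the existence of each limit $\FPP(W_A)$, which the paper uses only implicitly when it splits the limit of the sum.
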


\begin{proof}

Using the map in \cref{equation: cosets of GQP}, we deduce that cosets of $G_\mathcal{Q}^\mathcal{P}$ by $W_\mathcal{Q}$ are of the form $$W_A := \set{g \in \Aut(T): g|_v^1 \in A, \forall v \in T},$$ where $A$ is a coset of $\mathcal{P}/\mathcal{Q}$. Therefore, if $A,A'$ are two different cosets of $\mathcal{P}/\mathcal{Q}$, then $\pi_n(W_A) \cap \pi_n(W_{A'}) = \emptyset$ for all $n \geq 1$, and $$\bigcup_{A \in \mathcal{P}/\mathcal{Q}} \pi_n(W_A) = \pi_n(G_\mathcal{Q}^\mathcal{P}).$$ This allows us to split the calculation of $\FPP(G_\mathcal{Q}^\mathcal{P})$ as:
\begin{align*}
\FPP(G_\mathcal{Q}^\mathcal{P}) = \lim_{n \rightarrow +\infty} \frac{\#\set{g \in \pi_n(G_\mathcal{Q}^\mathcal{P}): \text{$g$ fixes a vertex in $\mathcal{L}_n$}}}{\abs{\pi_n(G_\mathcal{Q}^\mathcal{P})}} = \\  \sum_{A \in \mathcal{P}/\mathcal{Q}} \lim_{n \rightarrow +\infty} \frac{1}{[\pi_n(G_\mathcal{Q}^\mathcal{P}):\pi_n(W_\mathcal{Q})]} \, \frac{\#\set{g \in \pi_n(W_A): \text{$g$ fixes a vertex in $\mathcal{L}_n$}}}{\abs{\pi_n(W_\mathcal{Q})}}.
\end{align*}

By \cref{lemma: GQP group and index}, we have $[\mathcal{P}: \mathcal{Q}] = [\pi_n(G_\mathcal{Q}^\mathcal{P}):\pi_n(W_\mathcal{Q})]$ and $\abs{\pi_n(W_\mathcal{Q})} = \abs{\pi_n(W_\mathcal{A})}$ for any coset in $\mathcal{P}/\mathcal{Q}$. Therefore
\begin{align*}
\FPP(G_\mathcal{Q}^\mathcal{P}) = \frac{1}{[\mathcal{P}:\mathcal{Q}]}  \sum_{A \in \mathcal{P}/\mathcal{Q}} \lim_{n \rightarrow +\infty} \frac{\#\set{g \in \pi_n(W_A): \text{$g$ fixes a vertex in $\mathcal{L}_n$}}}{\abs{\pi_n(W_A)}}.
\end{align*}

The last limit corresponds to our extension of the definition of the fixed-point proportion of sets made in \cref{equation: definition FPP for sets}. We conclude that
\begin{align*}
\FPP(G_\mathcal{Q}^\mathcal{P}) = \frac{1}{[\mathcal{P}: \mathcal{Q}]} \sum_{A \in \mathcal{P}/\mathcal{Q}} \FPP(W_A). 
\end{align*}
\end{proof}

\section{Examples}
\label{section: examples}

In this subsection, we calculate the fixed-point proportion for explicit cases of level-transitive groups $G_\mathcal{Q}^\mathcal{P}$. By \cref{Proposition: FPP(GQP) in terms of FPP(A)}, we have  $$\FPP(G_\mathcal{Q}^\mathcal{P}) = \frac{1}{[\mathcal{P}:\mathcal{Q}]} \sum_{A \in \mathcal{P}/\mathcal{Q}} \FPP(W_A),$$ where each $A$ is a coset of $\mathcal{P}/\mathcal{Q}$. 

By \cref{proposition: GQP level transitive}, the subgroup $\mathcal{Q}$ must act transitively on $X = \set{1,\dots,d}$. Since $\mathcal{Q}$ is transitive, by \cref{lemma: Burnside cosets}.(1) applied to $\mathcal{P} \curvearrowright X$, we have $$f_A'(0) = \frac{1}{\# A} \sum_{a \in A} \# X^a = 1.$$

Therefore, by \cref{theorem: FPP(WS)}.(3), we have only two possibilities:
\begin{align*}
\FPP(W_A) = \left \{ \begin{matrix} 1 & \mbox{if every element in $A$ fixes exactly one point in $X$,} \\ 
0 & \mbox{otherwise.}
\end{matrix}\right.
\end{align*}

In particular $\FPP(W_\mathcal{Q}) = 0$ since the identity is in $\mathcal{Q}$.

\subsection{Construction 1}

Seeing $X = \set{1, \dots, d}$ as $\ZZ/d\ZZ$, the integer numbers modulo $d$, we define $$\mathcal{P} = \set{x \mapsto ax+b: a \in I, b \in \ZZ/d\ZZ},$$ where $I \leq (\ZZ/d\ZZ)^\times$ and $$\mathcal{Q} = \set{x \mapsto x+b: b \in \ZZ/d\ZZ}.$$ 

Here, $\mathcal{Q} \lhd \mathcal{P}$, and $\mathcal{Q}$ is transitive on $X$. By \cref{proposition: GQP tfg}, the group $G_\mathcal{Q}^\mathcal{P}$ is not topologically finitely generated.

The cosets of $\mathcal{P}/\mathcal{Q}$ are the sets $$A_a = \set{x \mapsto ax+b: b \in \ZZ/d\ZZ}.$$ To determine how many fixed points an element of the form $ax+b$ has, we use the following lemma:

\begin{Lemma}
Let $n \in \NN_{> 0}$ and $\alpha, \beta \in \ZZ/n\ZZ$. Then, the equation $$\alpha x \equiv \beta \pmod{n}$$ has a solution if and only if $\gcd(\alpha,n) \mid \beta$ and the number of solutions is $\gcd(\alpha, n)$. 
\label{lemma: solutions linear equations modulo n}
\end{Lemma}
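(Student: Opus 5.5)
The plan is to reduce to the case of coprime coefficient and modulus via the standard gcd argument. Write $g = \gcd(\alpha,n)$, where $\alpha$ is lifted to an integer representative; note that $g$ does not depend on the choice of representative, since $\gcd(\alpha + kn, n) = \gcd(\alpha,n)$. Likewise the condition $g \mid \beta$ is independent of the representative of $\beta$, because $g \mid n$.

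\emph{Necessity.} Suppose $\alpha x \equiv \beta \pmod{n}$ has a solution. Then $\alpha x - \beta = kn$ for some integer $k$. Since $g$ divides both $\alpha$ and $n$, it divides $\beta$.

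\emph{Sufficiency.} Assume $g \mid \beta$ and write $\alpha = g\alpha'$, $\beta = g\beta'$, $n = gn'$, with $\gcd(\alpha',n') = 1$. The key equivalence is
\[
\alpha x \equiv \beta \pmod{n} \iff n \mid g(\alpha' x - \beta') \iff n' \mid \alpha' x - \beta',
\]
so the original congruence is the same as $\alpha' x \equiv \beta' \pmod{n'}$. By B\'ezout, $\alpha'$ is invertible modulo $n'$, so this reduced congruence has exactly one solution $x_0$ modulo $n'$.

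\emph{Counting.} The solutions in $\ZZ/n\ZZ$ are exactly the classes modulo $n$ that reduce to $x_0$ modulo $n'$. These are the residues $x_0 + jn'$ for $j = 0,\dots,g-1$. They are pairwise distinct modulo $n$, because their differences $jn'$ with $0 < j < g$ are not multiples of $n = gn'$. Hence there are exactly $g = \gcd(\alpha,n)$ solutions.

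There is no real obstacle here. The only points needing care are that $\gcd(\alpha,n)$ is well defined for a class $\alpha \in \ZZ/n\ZZ$, and that the case $\alpha \equiv 0$ is covered: there $g = n$, and the congruence becomes $0 \equiv \beta$, which has all $n$ residues as solutions when $\beta = 0$ and none otherwise, consistent with the statement.
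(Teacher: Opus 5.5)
Your proof is correct and complete, including the check that $\gcd(\alpha,n)$ is well defined on residue classes, the reading of the count $\gcd(\alpha,n)$ as applying when the congruence is solvable, and the degenerate case $\alpha \equiv 0$. The paper gives no proof of this lemma and quotes it as a standard fact; your argument (reduce to $\alpha' x \equiv \beta' \pmod{n'}$ with $\alpha'$ invertible, then lift the unique solution $x_0$ modulo $n'$ to the $g$ residues $x_0 + jn'$ modulo $n$) is the standard textbook proof.
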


Applying \cref{lemma: solutions linear equations modulo n} with $(\alpha, \beta, n) = (a-1, -b, d)$ we conclude  
\begin{align*}
\FPP(W_{A_a}) = \left \{ \begin{matrix} 1 & \mbox{if $\gcd(a-1,d) = 1$,} \\ 
0 & \mbox{$\gcd(a-1,d) > 1$.}
\end{matrix}\right.
\end{align*}

Thus,
\begin{align}
\FPP(G_\mathcal{Q}^\mathcal{P}) = \frac{\#\set{a \in I: a-1 \in (\ZZ/d\ZZ)^\times}}{\Phi(d)},
\label{equation: FPP Galois x^d+1}
\end{align}
where $\Phi(d)$ is the Euler's totient function. 

In the particular case where $I = (\ZZ/d\ZZ)^\times$, we can write \cref{equation: FPP Galois x^d+1} in a more compact form. This case will be of particular interest in \cref{section: Application to iterated Galois groups} since it corresponds to the Galois group $G_\infty(\QQ, x^d+c,t)$.

\begin{Proposition}
Define $\psi: \NN \rightarrow \NN$ the function $$\psi(d) = \# \set{a \in (\ZZ/d\ZZ)^\times: a-1 \in (\ZZ/d\ZZ)^\times}.$$ Then $$\psi(d) = d \prod_{p \mid d} \left( 1 - \frac{2}{p} \right).$$ In particular, $$\frac{\psi(d)}{\Phi(d)} = \prod_{p \mid d} \frac{p-2}{p-1}.$$
\label{proposition: psi function invertibles modulo n}
\end{Proposition}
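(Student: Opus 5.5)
The plan is to use the Chinese Remainder Theorem to reduce $\psi$ to prime powers and then count directly. Write $d = \prod_{i} p_i^{e_i}$. The CRT isomorphism $\ZZ/d\ZZ \cong \prod_i \ZZ/p_i^{e_i}\ZZ$ is a ring isomorphism, so it sends units to units and commutes with $a \mapsto a-1$. An element $a$ satisfies both $a \in (\ZZ/d\ZZ)^\times$ and $a - 1 \in (\ZZ/d\ZZ)^\times$ exactly when each component $a_i$ satisfies $a_i \in (\ZZ/p_i^{e_i}\ZZ)^\times$ and $a_i - 1 \in (\ZZ/p_i^{e_i}\ZZ)^\times$. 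Hence $\psi$ is multiplicative:
$$\psi(d) = \prod_i \psi(p_i^{e_i}).$$

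Next I compute $\psi$ on a prime power. In $\ZZ/p^e\ZZ$ a residue is a unit if and only if it is not divisible by $p$. So $a$ is counted exactly when $a \not\equiv 0 \pmod p$ and $a \not\equiv 1 \pmod p$. These conditions depend only on $a \bmod p$, and they exclude $2$ of the $p$ residue classes. Each class mod $p$ has $p^{e-1}$ lifts, so
$$\psi(p^e) = p^{e-1}(p-2) = p^e\left(1 - \frac{2}{p}\right).$$
For $p = 2$ this gives $0$, which is correct: any unit $a$ is odd, so $a-1$ is even and not a unit. Multiplying over all primes gives $\psi(d) = d \prod_{p \mid d}(1 - 2/p)$.

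For the ratio, I use the standard formula $\Phi(d) = d\prod_{p\mid d}(1-1/p)$. Dividing factor by factor gives
$$\frac{1 - 2/p}{1 - 1/p} = \frac{p-2}{p-1},$$
which is the claimed product. The only point needing care is that the CRT bijection respects both the unit condition and the shift by $1$; this holds because it is a ring isomorphism. I do not expect any real obstacle beyond this.
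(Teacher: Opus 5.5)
Your proposal is correct and follows essentially the same route as the paper: multiplicativity of $\psi$ via the CRT ring isomorphism, the prime-power count $\psi(p^e) = p^{e-1}(p-2)$, and division by $\Phi(d) = d\prod_{p \mid d}(1 - 1/p)$. Your prime-power step is stated more cleanly than the paper's: you observe that both conditions depend only on $a \bmod p$ and exclude exactly two of the $p$ residue classes.
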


\begin{proof}
We start proving that $\psi$ is multiplicative, namely, if $m,n \in \NN$ are coprime, then $\psi(mn) = \psi(m) \psi(n)$. Indeed, by Chinese remainder theorem, the map
\begin{align*}
\varphi: \ZZ/mn\ZZ & \rightarrow (\ZZ/m\ZZ) \times (\ZZ/n \ZZ) \\
x + mn \ZZ & \mapsto (x + m \ZZ, x + n \ZZ)
\end{align*}
is a ring isomorphism. Thus, $1$ is sent to $(1,1)$ and units are sent to units. So $a-1$ is a unit in $\ZZ/mn\ZZ$ if and only if $\varphi(a) - 1 = (a_1-1,a_2-1)$ is a unit in $\ZZ/m\ZZ \times \ZZ/n\ZZ$, meaning that both $a_1-1$ and $a_2-1$ are units in their respective rings. Therefore, $\psi(mn) = \psi(m)\psi(n)$. 

We now compute $\psi(p^\alpha)$ for $p$ a prime number. The elements that are not units are of the form $pk$ with $k = 1,\dots,p^{\alpha-1}$, so $\psi(p^\alpha) = p^{\alpha-1}(p-2)$. 

Combining everything, if $d = \prod_{i = 1}^r p^{\alpha_i}$, then $$\psi(d) = \prod_{i = 1}^r p^{\alpha_i-1}(p-2) = d \prod_{i = 1}^r \left(1 - \frac{2}{p} \right).$$

Finally, since $\Phi(d) = d \prod_{i = 1}^r \left(1 - \frac{1}{p} \right)$, we obtain 
\begin{equation*}
\frac{\psi(d)}{\Phi(d)} = \prod_{p \mid d} \frac{p-2}{p-1}. \qedhere
\end{equation*}

\end{proof}

Thus, in the case where $I = (\ZZ/d\ZZ)^\times$, we have $\FPP(G_\mathcal{Q}^\mathcal{P}) = 0$ if and only if $d$ is even, since when $d$ is odd we have that $2$ and $2 -1 = 1$ are invertible modulo $\ZZ/d\ZZ$. 

\subsubsection{Construction 2}

Write $\mathcal{P} = \mathcal{Q} \rtimes \Aut(\mathcal{Q})$ with the operation $$(q,h)(q,h') = (q \, h(q'),h \, h').$$ In particular $(q,h)^{-1} = (h^{-1}(q^{-1}), h^{-1})$.

The group $\mathcal{P}$ acts on $\mathcal{P}/\Aut(\mathcal{Q})$ by left translation. A set of representatives for $\mathcal{P}/\Aut(\mathcal{Q})$ is $\set{(q,1)}_{q \in \mathcal{Q}}$.

\bigskip

The action has the following properties:

\begin{itemize}
\item \underline{The action is faithful:} Let $(q,h) \in \mathcal{P}$ such that $$(q,h)(q',1) \equiv (q',1) \pmod{\Aut(\mathcal{Q})}$$ for all $q' \in \mathcal{Q}$. Since $(q,h)(q',1) = (q \, h(q'), h)$, this means that $q \, h(q') = q'$ for every $q' \in \mathcal{Q}$. If $q' = 1$, then this implies that $q = 1$ and therefore $h(q') = q'$ for all $q'$, so $h$ is the identity. 

Thus, the action is faithful, and $\mathcal{P}$ can be seen as a subgroup of $\Sym(\abs{\mathcal{Q}})$. 

\item \underline{$\mathcal{Q}$ acts transitively:} Given $q_1,q_2 \in \mathcal{Q}$, then $(q_2 \, q_1^{-1},1) (q_1,1) = (q_2,1)$. 
\end{itemize}

\bigskip

The cosets of $\mathcal{P}/\mathcal{Q}$ have the form $$A_h = \set{(q,h): q \in \mathcal{Q}}.$$

An element $(q,h)$ fixes exactly one point in $\mathcal{P}/\Aut(\mathcal{Q})$ if and only if there exists a unique $q' \in \mathcal{Q}$ such that $$(q,h) (q',1) = (q \, h(q'), h) \equiv (q',1) \pmod{\Aut(\mathcal{Q})},$$ namely, $q \, h(q') = q'$ has a unique solution.

Given $d \geq 2$, write $d = 2^n r$ with $r$ odd and let $\mathcal{Q} = C_2^{n} \times C_r$, where $C_m$ is the cyclic group of order $m$. For this choice, $\Aut(\mathcal{Q}) \simeq \GL_n(\FF_2) \times (\ZZ/r\ZZ)^*$, where $\FF_2$ is the field with two elements.  

Elements $q \in \mathcal{Q}$ can be written as $(\vec{x},z)$, where $\vec{x} \in \FF_2^n$ and $z \in \ZZ/r\ZZ$, and elements $h \in \Aut(\mathcal{Q})$ as $(A,\alpha)$, where $A \in \GL_n(\FF_2)$ and $\alpha \in (\ZZ/r\ZZ)^*$. Since the group $\mathcal{Q}$ is abelian, we can use additive notation. Then $(q,h)$ has a unique fixed point in $\mathcal{P}/\Aut(\mathcal{Q})$ if and only if there exists a unique $q' = (\vec{x'}, z')$ such that $q + h(q') = q'$, namely, 
\begin{align*}
\left \{ \begin{matrix} \vec{x} + A \vec{x'} = \vec{x'}  \\ 
z + \alpha z' = z',  
\end{matrix}\right.
\end{align*}
or equivalently
\begin{align*}
\left \{ \begin{matrix} (A-1) \vec{x'} = -\vec{x}  \\ 
(\alpha-1) z' = -z.  
\end{matrix}\right.
\end{align*}

This will have a unique solution if and only if $A-1$ and $\alpha-1$ are invertible in $\GL_n(\FF_2)$ and $(\ZZ/r\ZZ)^*$, respectively, and this is independent of $q = (\vec{x}, z)$.

Therefore $$\FPP(G_\mathcal{Q}^\mathcal{P}) = \frac{\# \set{A \in \GL_n(\FF_2): A-1 \in \GL_n(\FF_2)}}{\abs{\GL_n(\FF_2)}} \, \prod_{p \mid r} \frac{p-2}{p-1}.$$

Notice that this is a generalization of construction 1 since when $d$ is odd, the group $$\set{x \mapsto x+b: b \in \ZZ/d\ZZ}$$ is isomorphic to $C_d$ and $$\set{x \mapsto ax+b: a \in (\ZZ/d\ZZ)^*, b \in \ZZ/d\ZZ}$$ is isomorphic to $C_d \rtimes \Aut(C_d) = C_d \rtimes (\ZZ/d\ZZ)^*$. The issue caused by $2$ when $d$ is even in construction 1 is, in this case, isolated. 

To prove that this construction of $G_\mathcal{Q}^\mathcal{P}$ is an example of a group with positive fixed-point proportion, we still need to find: 
\begin{enumerate}
\item a number in $\ZZ/r\ZZ$ such that $\alpha$ and $\alpha -1$ are invertible with $r$ odd and 
\item a matrix in $\GL_n(\FF_2)$ such that $A$ and $A-1$ are both invertible.
\end{enumerate}

In the case of $\alpha$, since $r$ is odd then $\alpha = 2$ satisfies the condition. 

For the matrices, define $$A_2 := \mat{1 & 1 \\ 1 & 0}$$ and $$A_3 := \mat{1 & 1 & 1 \\ 1 & 1 & 0 \\ 1 & 0 & 0}.$$

Since the rows are linearly independent, they are invertible. Moreover, $$A_2 - 1 = \mat{0 & 1 \\ 1 & 1}$$ and $$A_3 - 1 = \mat{0 & 1 & 1 \\ 1 & 0 & 0 \\ 1 & 0 & 1},$$ which are also invertible. Therefore, we have examples for $n = 2$ and $n = 3$. 

If $n > 3$, consider two cases. If $n$ is even, define $$A_n := \mat{A_2 & & & \\ & A_2 & & \\ & & \ddots & \\ & & & A_2 }$$ and if $n$ is odd, $$A_n := \mat{A_2 & & & & \\ & A_2 & & & \\ & & \ddots & & \\ & & & A_2 & \\ & & & & A_3}$$

By induction and the property of the determinant of subblocks, then $A_n$ and $A_n-1$ are invertible. 

The only case that cannot be addressed with this construction is when $n = 1$, or equivalently $d \equiv 2 \pmod 4$ because $\GL_1(\FF_2)$ has only one invertible element.

\cref{table: matrices invertible} shows the values of $$\# \set{A \in \GL_n(\FF_2): A-1 \in \GL_n(\FF_2)}$$ and $$\frac{\# \set{A \in \GL_n(\FF_2): A-1 \in \GL_n(\FF_2)}}{\abs{\GL_n(\FF_2)}}$$ for the first cases of $n$. Combined with \cref{proposition: psi function invertibles modulo n}, this allows us to calculate the fixed-point proportion for several cases of this construction.

\begin{table}[h!]
\begin{tabular}{|c|c|c|c|}
\hline
\textbf{$n$} & \textbf{$\# \set{A \in \GL_2(\FF_n): A-1 \in \GL_n(\FF_2)}$} & \textbf{$\abs{\GL_n(\FF_2)}$} & \textbf{$\frac{\# \set{A \in \GL_n(\FF_2): A-1 \in \GL_n(\FF_2)}}{\abs{\GL_n(\FF_2)}}$} \\ \hline
\textbf{1} & 0 & 1 & 0  \\ \hline
\textbf{2} & 2 & 6 & $1/3 \approx 0.333$ \\ \hline
\textbf{3} & 48 & 168 & $2/7 \approx 0.2857 \dots$  \\ \hline
\textbf{4} & 5824 & 20160 & $13/45 \approx 0.288 \dots$  \\ \hline
\textbf{5} & 2887680 & 9999360 & $188/651 \approx 0.2887 \dots$ \\ \hline
\end{tabular}
\caption{Number of matrices $A \in \GL_n(\FF_2)$ such that $A-I$ is also invertible.}
\label{table: matrices invertible}
\end{table}

There are other groups $\mathcal{Q}$ similar to $C_2^n \times C_r$ with $r$ odd that can be defined, and their fixed-point proportions will have a similar formula. For example, we can consider $\mathcal{Q} = \prod_{p \mid d } C_p^{e_p}$ where $p$ is a prime number and $e_p$ is the largest natural number such that $p^{e_p} \mid d$. 

\subsubsection{The problem of $d \equiv 2 \pmod{4}$}

Using GAP \cite{GAP}, all possible transitive groups $\mathcal{Q} \leq \Sym(d)$ were computed for $d \equiv 2 \pmod{4}$ with $d$ ranging from $2$ to $30$. Since $\mathcal{Q}$ must be normal in $\mathcal{P}$, we take $\mathcal{P}$ the normalizer of $\mathcal{Q}$ in $\Sym(d)$, since this corresponds to the most favorable scenario. However, for all tested values of $d$, no transitive group $\mathcal{Q}$ as subgroup of $\mathcal{P}$ had a coset where every element in the coset fixed exactly one element. This suggests an obstruction for these values of $d$ to admit a subgroup $\mathcal{Q}$ meeting the required conditions. The obstruction aligns with the special case considered in Construction 2, where $\mathcal{Q}$ was abelian of order $d$. The specific structure of $\mathcal{Q}$ in this case explains why the conditions fails for $d \equiv 2 \pmod{4}$.

\section{Application to iterated Galois groups}
\label{section: Application to iterated Galois groups}

In this subsection, we will demonstrate that Construction 1 in \cref{section: examples} corresponds to the iterated Galois group $G_\infty(K,x^d+c,t)$ where $K$ is any field, $t$ is transcendental over $K$ and $c \in K^\times$ such that $0$ is wandering for $f$ (namely, $f^n(0) \neq f^m(0)$ for all $n \neq m$).

\begin{proof}[Proof of \cref{theorem: FPP x^d+1}]

We start examining the solutions of $x^d + c = t$. These solutions are of the form $\xi^{x_1} \sqrt[d]{t-c}$ where $\xi$ is a primitive $d$-th root of unity, and $x_1 = 1,\dots,d$. Denote these solutions by $\alpha_{x_1}$. On the second level of the tree, we consider the preimages of each $\alpha_{x_1}$. These preimages are given by $$\alpha_{x_1 x_2} = \xi^{x_2} \sqrt[d]{\alpha_{x_1} - c}.$$ Continuing in this manner, at level $n$, the solutions are of the form $\alpha_{x_1 \dots x_n}$, where $$\alpha_{x_1 \dots x_n} = \xi^{x_n} \alpha_{x_1 \dots x_{n-1}d}$$ and $$\alpha_{x_1 \dots x_{n-1}d} = \sqrt[d]{\alpha_{x_1\dots x_{n-1}} - c}.$$ In the $d$-regular tree, we place $\alpha_{x_1\dots x_n}$ in the $n$-th level of the tree, ordered by lexicographical order. Notice that all preimages in the same level are different by \cref{lemma: t transcendental d^n preimages}.

To calculate the Galois group $G_\infty(K,x^d+c,t)$, we rely on the following theorem:

\begin{theorem}[{\cite[Theorem 3.1]{Juul2014}}]
Let $k$ be a field and $f \in k(x)$ a rational function with degree $d \geq 2$ such that $f' \neq 0$. Let $t$ be transcendental over $k$, write $L = k(t)$ and $L_\infty =  L \left( \bigcup_{n \in \NN} f^{-n}(t) \right)$. Let $C$ denote the set of critical points of $f$, write $F = L(f^{-1}(t)) \cap \overline{k}$ and $\mathcal{Q} = \Gal(L(f^{-1}(t))/F(t))$. If for any $a,b \in C$ and $m,n \in \NN$ we have $f^m(a) = f^n(b)$ if and only if $m = n$ and $a = b$, then $\Gal(L_\infty/F(t)) \simeq W_\mathcal{Q}$, where $W_\mathcal{Q}$ is the iterated wreath product of $\mathcal{Q}$.
\label{teo_GaloisGroup_wreathproduct_TuckerJuul}
\end{theorem}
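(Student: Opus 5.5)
The plan is to prove the isomorphism level by level, by induction on $n$. Put $L_n := F(t)(f^{-n}(t))$; this is the field $L(f^{-n}(t))$ enlarged by $F$, and for $n\ge 1$ it is just $L(f^{-n}(t))$, since $F \subseteq L(f^{-1}(t))$. I would show that for every $n \geq 0$
$$[L_{n+1}:L_n] = \abs{\mathcal{Q}}^{d^n}, \qquad \Gal(L_{n+1}/L_n) = \prod_{\beta \in f^{-n}(t)} \mathcal{Q}_\beta,$$
where $\mathcal{Q}_\beta$ is a copy of $\mathcal{Q}$ acting on the $d$ children of $\beta$. Since $\Gal(L_\infty/F(t))$ acts on the preimage tree with labels in conjugates of $\mathcal{Q}$, it already embeds in $W_\mathcal{Q}$. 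Matching the orders of $\pi_n$ at every level then gives equality, because $\abs{\pi_{n+1}(W_\mathcal{Q})} = \abs{\pi_n(W_\mathcal{Q})}\,\abs{\mathcal{Q}}^{d^n}$.

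First I would establish the upper bound, namely that each local extension $M_\beta := L_n(f^{-1}(\beta))$ has Galois group over $L_n$ isomorphic to a subgroup of $\mathcal{Q}$. This holds because $f(x)-\beta$ is obtained from $f(x)-t$ by the substitution $t\mapsto \beta$, and $\beta$ is transcendental over $F$. So $F(\beta)(f^{-1}(\beta))/F(\beta)$ is isomorphic to $F(t)(f^{-1}(t))/F(t)$, with group $\mathcal{Q}$, and base change to $L_n \supseteq F(\beta)$ can only shrink the group. Hence $\Gal(L_{n+1}/L_n) \hookrightarrow \prod_\beta \mathcal{Q}_\beta$, and what remains is surjectivity onto the product.

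For surjectivity I would use ramification. Since $F$ is the full constant field and $\PP^1_{\overline{F}}$ is simply connected, $\mathcal{Q}$ is generated by its inertia groups. These sit over the branch points of $f$, namely the values $f(a)$ for $a\in C$ (including $\infty$ if it is critical). For the extension $M_\beta/F(\beta)$ the branch places are $\beta = f(a)$, which lie over places of $F(t)$ where $t = f^{n+1}(a)$. The hypothesis that $f^m(a) = f^{m'}(b)$ only when $m=m'$ and $a=b$ does two jobs. It shows these places are unramified in $L_n$, because the branch locus of $L_n/F(t)$ is $\{f^m(a): a\in C,\ 1\le m\le n\}$. It also shows that for a place $\mathfrak{p}$ of $L_n$ over $t = f^{n+1}(a)$ there is exactly one $\beta \in f^{-n}(t)$ with $\beta \equiv f(a) \bmod \mathfrak{p}$. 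So an inertia group at a place of $L_{n+1}$ above $\mathfrak{p}$ injects, via Abhyankar/unramified base change, onto the full inertia group of $M_\beta/L_n$, and acts trivially on every $M_{\beta'}$ with $\beta'\ne\beta$. Therefore $\Gal(L_{n+1}/L_n)$ contains, inside the factor $\mathcal{Q}_\beta$, the inertia subgroups of $M_\beta/L_n$.

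The main obstacle is the final step: showing that these inertia subgroups, for a fixed $\beta$, generate all of $\mathcal{Q}_\beta$, which requires that $M_\beta/L_n$ has no nontrivial everywhere-unramified geometric subextension. I would argue as follows. Let $E \subseteq M_\beta$ be the fixed field of all the inertia at places over $\beta = f(a)$. Then $E \cap F(\beta)(f^{-1}(\beta))$ is an extension of $F(\beta)\cong F(t)$ that is unramified everywhere and has constant field $F$; the constant field does not grow, by the definition of $F$ together with linear disjointness of constants. Such an extension must be trivial. I would carry this out by comparing the inertia groups of $F(\beta)(f^{-1}(\beta))/F(\beta)$ with those of $M_\beta/L_n$, which coincide because $L_n/F(\beta)$ is unramified at those places. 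Once every factor is generated, the product of the generated subgroups is the whole of $\prod_\beta\mathcal{Q}_\beta$, which closes the induction.
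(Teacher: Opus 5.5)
The paper does not prove this theorem itself (it is quoted from \cite[Theorem 3.1]{Juul2014}), and your argument is correct and is essentially the standard ramification proof used there. It inducts on levels and uses that a place of $L_n$ above $t=f^{n+1}(a)$ ramifies in exactly one $M_\beta$; it then gets generation of $\mathcal{Q}$, and hence of $\Gal(M_\beta/L_n)$ via restriction of inertia across $L_n/F(\beta)$ (unramified at those places), by inertia groups, because $\PP^1_{\overline{F}}$ has no nontrivial connected \'etale covers.

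The one point to state more carefully is the embedding into $W_\mathcal{Q}$. Instead of ``labels in conjugates of $\mathcal{Q}$'', label each $f^{-1}(\beta)$ by transporting a fixed labeling of $f^{-1}(t)$ through an $F$-isomorphism $F(t)(f^{-1}(t))\to F(\beta)(f^{-1}(\beta))$ with $t\mapsto\beta$. Then every label of every $g\in\Gal(L_\infty/F(t))$ lies in $\mathcal{Q}$ itself.
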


In our case, the polynomial $f$ has only $0$ as its critical point, and the orbit of $0$ is wandering by the choice of $c$. Thus, $\Gal(K_\infty(f)/F(t)) = W_\mathcal{Q}$, where $F = K_1(f) \cap \overline{K} = K(\xi)$, and $$\mathcal{Q} = \Gal(K_1(f)/F(t)) = \Gal(F(\alpha_d,t)/F(t)).$$ If $\sigma \in \mathcal{Q}$, there exists $b \in \ZZ/d\ZZ$ such that $\sigma(\alpha_d) = \alpha_b$ and then $$\sigma(\alpha_x) = \sigma(\xi^x \alpha_d) = \xi^x \alpha_b = \alpha_{x + b}.$$ This implies that $\mathcal{Q} \simeq \set{x \mapsto x+b: b \in \ZZ/d \ZZ}$.

Now, consider the monomorphism of groups $\varphi: \Gal(K(\xi)/K) \rightarrow (\ZZ/d\ZZ)^\times$ given by $\sigma \mapsto a$ such that $\sigma(\xi) = \xi^a$ and write $I = \IIm(\varphi)$.

If $g \in G_\infty(K,x^d+c,t)$, then there exists $a \in I$ such that $\sigma(\xi) = \xi^a$ and given $x_1,\dots, x_{n-1} \in \ZZ/d\ZZ$, there exist $y_1,\dots, y_n \in \ZZ/d\ZZ$ such that $\sigma(\alpha_{x_1 \dots x_{n-1}d}) = \alpha_{y_1 \dots y_n}$. Then, if $x_n \in \ZZ/d\ZZ$,
\begin{align*}
g(\alpha_{x_1 \dots x_{n-1}x_n}) = g(\xi^{x_n} \alpha_{x_1 \dots x_{n-1}d}) = g(\xi)^{x_n} \alpha_{y_1 \dots y_n} = \\ \xi^{a x_n} \alpha_{y_1 \dots y_n} = \alpha_{y_1 \dots y_{n-1}(ax_n + y_n)}.
\end{align*}

This shows that the label $g|_{x_1 \dots x_{n-1}}^1 = x \mapsto ax + y_n$. Notice that the slope of the linear action of the label does not depend on the vertex and it only depends on the element $g$. Therefore, if we call $\mathcal{P} = \set{x \mapsto ax+b: a \in I, b \in \ZZ/d\ZZ}$, we obtain that $G_\infty(K,x^d+c,t) \leq G_\mathcal{Q}^\mathcal{P}$. 

If we take the quotient $G_\infty(K, x^d+c,t)/W_\mathcal{Q}$, we have 
\begin{align*}
G_\infty(K, x^d+c,t)/W_\mathcal{Q} = G_\infty(K, x^d+c,t)/\Gal(K_\infty(f)/F(t)) \\ \simeq \Gal(K(\xi,t)/K(t)) \simeq \Gal(K(\xi)/K(t)) \simeq I
\end{align*}
and so $$[G_\infty(K, x^d+c,t):W_\mathcal{Q}] = \abs{I} = \abs{\mathcal{P}/\mathcal{Q}} = [G_\mathcal{Q}^\mathcal{P}: W_\mathcal{Q}].$$ 
Thus, $G_\infty(\QQ,f,t) = G_\mathcal{Q}^\mathcal{P}$ and $\FPP(G_\infty(\QQ,f,t))$ is given by \cref{equation: FPP Galois x^d+1}. 
\end{proof}

\section{Open questions}
\label{section: open questions}

We conclude this article by posing the following questions, which are considered by the author to be relevant and interesting for a better understanding of the theory of the fixed-point proportion of groups acting on trees:

\begin{enumerate}
\item \cref{lemma: continuity FPP} answers the question about the continuity of the function $\FPP$. The next question is about the surjectivity of $\FPP$ in $[0,1]$:

\begin{Question}
Fix $d \in \NN$ and $T$ a $d$-regular tree. Given $\alpha \in [0,1]$, can we find a group $G \leq \Aut(T)$ such that $\FPP(G) = \alpha$? Does the same hold if we restrict to topologically finitely generated groups?
\label{question: FPP surjective}
\end{Question}

If we restrict to level-transitive groups, the answer to \cref{question: FPP surjective} is no. Indeed, as the fixed-point proportion has to be smaller than the proportion on the first level by the proof of \cref{lemma: FPP first properties}, if $F_1$ denotes the set of elements in $\pi_1(G)$ fixing at least one vertex in $\mathcal{L}_1$, by Burnside's lemma,
\begin{align*}
1 = \frac{1}{\abs{\pi_1(G)}} \sum_{g \in \pi_1(G)} \# (\mathcal{L}_1^g) = \frac{1}{\abs{\pi_1(G)}} \sum_{g \in F_1} \# (\mathcal{L}_1^g) \\ = \frac{1}{\abs{\pi_1(G)}} \left(d + \sum_{g \in F_1 \setminus \set{1}} \# (\mathcal{L}_1^g) \right) \geq \frac{1}{\abs{\pi_1(G)}} \left(d + \# F_1 -1 \right) \\ = \frac{d-1}{\abs{\pi_1(G)}} + \frac{\# F_1}{\abs{\pi_1(G)}} 
\end{align*}
Solving for the last term and using that $\pi_1(G) \leq \Sym(d)$,
\begin{align*}
\FPP(G) \leq \frac{\# F_1}{\pi_1(G)} = 1 - \frac{d-1}{\abs{\pi_1(G)}} \leq 1 - \frac{d-1}{d!}.
\end{align*}
For example, when $d = 2$, we obtain that no level-transitive group can have fixed-point proportion bigger than $1/2$. Moreover, doing the same argument in the second level and inspecting the possible transitive groups $\pi_2(G)$, it can be seen that for level-transitive groups in the binary tree, the fixed-point proportion cannot be bigger than $3/8$.

This leads us to rephrase \cref{question: FPP surjective} in the case of the family of level-transitive groups:

\begin{Question}
Fix $d \in \NN$ and $T$ a $d$-regular tree. What is the maximum fixed-point proportion for level-transitive groups acting on $T$? Let $\alpha_d$ be this maximum. Given $\alpha \in [0,\alpha_d]$, does there exist $G$ a level-transitive group of $\Aut(T)$ such that $\FPP(G) = \alpha$? 
\end{Question}

\item Finally, we wonder about the behavior of the fixed-point proportion when we take random subgroups. 

\begin{Question}
Let $T$ be a spherically homogeneous tree, $n$ a positive integer, and $G \leq \Aut(T)$ a closed subgroup with normalized Haar measure $\mu$ and $\FPP(G) = 0$. Consider $G^n$ as a probability space equipped with the product measure. Select $n$ random elements of $G$ and let $H$ the subgroup they generate. Does $\FPP(H) = 0$ with probability $1$?
\label{question: FPP random subgroups}
\end{Question}
\end{enumerate}

\bibliographystyle{unsrt}

\end{document}